\newtheorem{theorem}{Theorem}[section]
\newtheorem{corollary}[theorem]{Corollary}
\newtheorem{example}[theorem]{Example}
\newtheorem{lemma}[theorem]{Lemma}
\newtheorem{problem}[theorem]{Problem}
\newtheorem{proposition}[theorem]{Proposition}
\newtheorem{remark}[theorem]{Remark}
\begin{document}
\title{The $X$-semiprimeness of rings}
\author{Grigore C\u{a}lug\u{a}reanu, Tsiu-Kwen Lee, Jerzy Matczuk}
\thanks{Corresponding author:\ Tsiu-Kwen Lee.\eject
MSC 2020 Classification: 16N60, 16U40, 16R50.\eject
Keywords: $X$-semiprime, idempotent semiprime, matrix ring, Lie
ideal, semiprime (prime) ring,  derivation, regular ring, subdirect product.\eject
Orcid: 0000-0002-3353-6958,
0000-0002-1262-1491, 0000-0001-8749-6229}
\address{Department of Mathematics, Babe\c{s}-Bolyai University,
Cluj-Napoca, 400084, Romania}
\email{calu@math.ubbcluj.ro}
\address{Department of Mathematics, National Taiwan University, Taipei 106,
Taiwan}
\email{tklee@math.ntu.edu.tw}
\address{Department of Mathematics, University of Warsaw, Banacha 2, 02-097
Warsaw, Poland}
\email{jmatczuk@mimuw.edu.pl}

\begin{abstract}
For a nonempty subset $X$ of a ring $R$, the ring $R$ is called $X$-semiprime if,
given $a\in R$, $aXa=0$ implies $a=0$. This provides   a proper class of
semiprime rings. First, we clarify the relationship between idempotent semiprime and unit-semiprime rings.
Secondly, given a Lie ideal $L$ of a ring $R$, we offer a
criterion for $R$ to be $L$-semiprime. For a prime ring $R$, we
characterizes Lie ideals $L$ of $R$ such that $R$ is $L$-semiprime. Moreover, $X$-semiprimeness of matrix rings, prime rings (with a nontrivial idempotent), semiprime rings, regular rings, and subdirect products are studied.
\end{abstract}

\maketitle

\section{Introduction}
Throughout the paper, rings are always associative with identity. For a ring
$R$, $U(R)$ (resp. $Id(R)$) denotes the set of all units (resp. idempotents)
of $R$, and $Z(R)$ stands for the center of $R$.

A ring $R$ is called {\it semiprime} if, given $a\in R$, $aRa=0$ implies $%
a=0$. Also, it is called {\it prime} if, given $a,b\in R$, $aRb=0$
implies that either $a=0$ or $b=0$. The purpose of the paper is to study a
more general notion concerning semiprimeness and primeness of rings.\vskip4pt

\noindent\textbf{Definition}. Let $X$ be a nonempty subset of a ring $R$.
The ring $R$ is called {\it $X$-semiprime} (resp. {\it $X$-prime}) if, given $a\in R$, $aXa=0$ implies $a=0$ (resp. if, given $a,b\in R$, $%
aXb=0$ implies either $a=0$ or $b=0$).\vskip4pt

 An $X$-semiprime (resp. $X$-prime) ring with $X=\{1\}$ is reduced (resp. a domain). Also, a reduced ring (resp. domain) is $X$-semiprime (resp. $X$-prime) if $1\in X$.

\begin{proposition}\label{pro5}
Given a subset $X$ of a ring $R$, if $R$ is a prime $X$-semiprime ring, then
it is $X$-prime.
\end{proposition}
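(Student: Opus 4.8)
The plan is to start from a relation $aXb=0$ (that is, $axb=0$ for every $x\in X$) and to manufacture an element $c$ with $cXc=0$, so that $X$-semiprimeness forces $c=0$; iterating over a multiplier will then produce a two-sided annihilation condition to which primeness applies.

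Concretely, suppose $a,b\in R$ satisfy $aXb=0$. Fix an arbitrary $r\in R$ and put $c=bra$. For each $x\in X$ I compute
\[
cxc=(bra)x(bra)=br\,(axb)\,ra=br\cdot 0\cdot ra=0,
\]
using $axb=0$. Hence $cXc=0$, and by the $X$-semiprimeness of $R$ we get $c=bra=0$. Since $r\in R$ was arbitrary, this shows $bRa=0$.

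Now invoke the primeness of $R$: from $bRa=0$ we conclude $b=0$ or $a=0$. This is exactly the statement that $R$ is $X$-prime, so the proof is complete.

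I do not anticipate a genuine obstacle here: the only idea needed is the sandwiching trick $c=bra$, which turns the ``mixed'' condition $aXb=0$ into a ``diagonal'' one $cXc=0$; everything else is bookkeeping. The one point worth stating carefully is that $aXb=0$ is to be read elementwise (each $axb=0$), so that multiplying on the left by $br$ and on the right by $ra$ is legitimate term by term, and that no hypothesis on $X$ (such as $1\in X$ or $0\notin X$) is used.
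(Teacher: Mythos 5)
Your proof is correct and is essentially identical to the paper's argument: the paper also forms $c=bxa$ for arbitrary $x\in R$, notes $cXc=0$, applies $X$-semiprimeness to get $bRa=0$, and concludes by primeness. No differences worth noting.
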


\begin{proof}
Indeed, let $aXb=0$, where $a, b\in R$. Then $bxaXbxa=0$
for all $x\in R$. The $X$-semiprimeness of $R$ implies that $bxa=0$ for all $x\in R$. By the primeness of $R$,
either $a=0$ or $b=0$, as desired.
\end{proof}

According to Proposition \ref{pro5}, it suffices to study the $X$-semiprime case.
In this way, we can make our statements more concise.

In \cite{cal2} the case $X=U(R)$ was considered, the ring $R$ is called {\it unit-semiprime} if it is $U(R)$-semiprime.
This turned out to be
an interesting class of semiprime rings. In this paper we first consider the
special case $X=Id(R)$. A ring $R$ is called {\it idempotent semiprime} if it is $Id(R)$-semiprime. Let $X^{+}$ denote
the additive subgroup of $R$ generated by $X$. Clearly, a ring $R$ is $X$%
-semiprime if and only if it is $X^{+}$-semiprime.  Let $E(R):=\mathit{Id}(R)^{+}$. Thus the $E(R)$-semiprimeness of $R$ just means that $R$ is idempotent semiprime.

In this paper we will be concerned with the following\vskip6pt

\begin{problem}\label{problem3}
 Given a semiprime or prime ring $R$, find subsets $X$  of $R$ such that $R$ is $X$-semiprime, that is, given $a\in R$,
$$
aXa=0\ \ \Longrightarrow\ \ a=0.
$$
\end{problem}

We organize the paper as follows.

In \S 2 it is proved that every idempotent semiprime ring is unit-semiprime (see Theorem \ref{thm9}).
The converse is in general not true by an example from A. Smoktunowicz in \cite{smok}.
Applying Theorem \ref{thm9} we give an example of a prime ring, which is not idempotent semiprime (see Theorem \ref{thm24}).

In \S 3 we establish a criterion for Lie ideals $L$ of a semiprime
(resp. prime) ring $R$ such that $R$ is $L$-semiprime  (see Theorem \ref{thm3}) and,
in particular, matrix rings are studied. We also conclude that idempotent
semiprime is not a Morita invariant property.

In \S 4 we give a complete characterization of the $L$-semiprimeness of a prime ring $R$ for any given Lie ideal $L$  (see Theorem \ref{thm8}).
Also, in a prime ring $R$ possessing a nontrivial idempotent, its additive
subgroups $X$, which are invariant under all special automorphisms of $R$, are
characterized by Chuang's theorem and hence $R$ is $X$-prime \cite{C} (see Theorem \ref{thm10}).

In \S 5  we characterize the $d(R)$-semiprimeness of a given prime ring $R$, where $d$ is a derivation of $R$. Moreover, we also study the $d(L)$-semiprimeness of $R$ for $L$ a Lie ideal of $R$. See Theorems \ref{thm21} and \ref{thm23}.

In \S 6 and \S 7 we are concerned with the problem whether, given a Lie ideal $N$ of a semiprime ring $R$, $\ell_R(N)=0$  is a sufficient condition for $R$ to be $N$-semiprime, where $\ell_R(N)$ denotes the left annihilator of $N$ in $R$. It is in general not true. However, it is indeed true if either $N=[E(R), R]$ or $N=[L, R]$ when $R$  is $2$-torsion free and $L$ is a Lie ideal of $R$ (see Theorems \ref{thm22} and \ref{thm13}).

In \S 8  it is proved that every regular ring is idempotent semiprime but is in general not $[E(R), R]$-semiprime (see Theorem \ref{thm2} and Example \ref{example3}).
Finally, in \S 9, we study the subdirect product properties of $X$-semiprime rings.

Whenever it is more convenient, we will use the widely accepted shorthand
\textquotedblleft iff\textquotedblright\ for \textquotedblleft if and only
if\textquotedblright\ in the text.

\section{Idempotent semiprime}

We begin with clarifying the relationship between \textquotedblleft idempotent
semiprime" and \textquotedblleft unit-semiprime".

\begin{theorem}
\label{thm9}
If $R$ is an $E(R)$-semiprime ring, then it is $U(R)$-semiprime.
\end{theorem}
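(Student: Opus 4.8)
The plan is to take $a\in R$ with $aU(R)a=0$ and show $a=0$; since $R$ is $E(R)$-semiprime, it is enough to prove $aE(R)a=0$, i.e.\ $aea=0$ for every $e\in Id(R)$. Taking $u=1\in U(R)$ gives $a^2=0$ at once. The naive continuation — to attack $aea$ head-on — stalls: the only unit one can manufacture from a bare idempotent $e$ is $1-2e$, and $a(1-2e)a=0$ only delivers $2aea=0$, which carries no information once $2a=0$. Circumventing this $2$-torsion obstruction is the crux of the argument.

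Instead I would bring in the \emph{unipotent} units attached to the Peirce decomposition of $a$ relative to $e$ and $f:=1-e$. Since $ef=fe=0$, each of $erf$ and $fre$ (for $r\in R$) squares to $0$, so $1+erf$ and $1+fre$ are units; feeding them into $aU(R)a=0$ and cancelling $a^2=0$ yields $a\,(eRf)\,a=0$ and $a\,(fRe)\,a=0$. The decisive step is then to apply $E(R)$-semiprimeness not to $a$ itself but to the off-diagonal corner $b:=eaf$: for an arbitrary idempotent $h$ one computes $bhb=e\bigl(a(fhe)a\bigr)f=0$, because $fhe\in fRe$; hence $b\,E(R)\,b=0$ and so $b=eaf=0$. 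Symmetrically $fae=0$, for every idempotent $e$.

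From $eaf=fae=0$ for all idempotents $e$ it follows that $a=eae+faf$ and $ea=eae=ae$, i.e.\ $a$ commutes with every idempotent of $R$. But then, for each idempotent $e$, $aea=a(ea)=a(ae)=a^2e=0$, so $aE(R)a=0$, and $E(R)$-semiprimeness gives $a=0$. The only points that need care are the legitimacy of invoking $E(R)$-semiprimeness on the auxiliary element $b=eaf$ (and not just on $a$), and the routine observation that $bhb=0$ for all $h\in Id(R)$ really does force $b\,E(R)\,b=0$ since $E(R)=Id(R)^+$ is additively generated by the idempotents.
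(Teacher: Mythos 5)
Your proof is correct and follows essentially the same route as the paper's: both start from $a^2=0$, feed the unipotent units $1+ex(1-e)$ and $1+(1-e)xe$ into $aU(R)a=0$ to get $a\,(eR(1-e))\,a=0=a\,((1-e)Re)\,a$, deduce that $a$ commutes with every idempotent, and finish with $aea=a^2e=0$, so $aE(R)a=0$ and $a=0$. The only (harmless) difference is at the corner-killing step: you apply the $E(R)$-semiprimeness hypothesis directly to the auxiliary elements $eaf$ and $fae$, whereas the paper applies ordinary semiprimeness of $R$ (an immediate consequence of $E(R)$-semiprimeness, since $E(R)\subseteq R$) to $(1-e)ae$ and $ea(1-e)$.
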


\begin{proof}
Let $a\in R$ be such that $%
aua=0$ for all $u\in U(R)$. In particular, $a^{2}=0$. Let $e=e^{2}\in R$ and
$x\in R$. Since $1+ex(1-e),1+(1-e)xe\in U(R)$, we have
\begin{equation*}
a\big(1+ex(1-e)\big)a=0=a\big(1+(1-e)xe\big)a.
\end{equation*}%
Thus $aeR(1-e)a=0$ and $a(1-e)Rea=0$. The semiprimeness of $R$ implies that $%
(1-e)ae=0=ea(1-e)$ and so $ae=ea$. Hence $aea=ea^{2}=0$, that is, $aE(R)a=0$
and so $a=0$, as desired.
\end{proof}

The converse implication of Theorem \ref{thm9} does not hold.

\begin{example}
\label{example2}\normalfont Let $K$ be a countable field and let $S$ be the
simple nil $K$-algebra constructed by A. Smoktunowicz in \cite{smok}. Let $R$
be the unital algebra obtained by adjoining unity to $S$ using $K$. Then $R$
is a local prime ring with maximal ideal $S$ and every element of $R$ is a
sum of (two) units. Thus $R$ is unit-prime. However, $R$ is not idempotent
prime as it has only trivial idempotents and it is not a domain.
\end{example}

The following offers a prime ring which is
not idempotent semiprime.

\begin{theorem}
\label{thm24}
Let $R=k\left\langle x, y|x^{2}=0\right\rangle $, where $k$ is a field. Then $R$ is a prime ring which is not idempotent
semiprime.
\end{theorem}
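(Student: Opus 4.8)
The ring $R = k\langle x,y \mid x^2 = 0\rangle$ is the free algebra on two generators modulo the single relation $x^2 = 0$. I need to establish two things: that $R$ is prime, and that $R$ is not idempotent semiprime, i.e., that $E(R) = \{0, 1\}$ (only trivial idempotents) while $R$ is not reduced (indeed $x \neq 0$ but $x^2 = 0$, so once we know idempotents are trivial, failure of idempotent semiprimeness follows immediately since $x \cdot E(R) \cdot x = x(k\cdot 1)x = kx^2 = 0$ but $x \neq 0$).

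First I would pin down a normal form for elements of $R$. A $k$-basis is given by all words in $x, y$ in which $x^2$ never occurs as a subword; equivalently, every word is an alternating-type product in which $x$ appears only to the first power and $y$ appears in arbitrary powers, interspersed freely. This can be made rigorous either by a straightforward Bergman diamond-lemma argument (the single rewriting rule $x^2 \to 0$ has no overlap ambiguities, so the reduced words form a basis) or by recognizing $R$ as an iterated construction. With such a normal form in hand, primeness is the easier half: given nonzero $a, b \in R$, pick the highest-degree (or suitably leading) terms of $a$ and $b$ with respect to the word length grading, and choose a suitable monomial $w$ in $y$ alone — something like a high power $y^n$ — so that the leading term of $a y^n b$ cannot cancel against any other term; hence $aRb \neq 0$. (One should be a little careful about which grading or ordering makes cancellation impossible; a degree-lexicographic order on reduced words should do it, since multiplying by a long block $y^n$ in the middle prevents any reduction that would merge the two sides.)

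The heart of the matter — and the step I expect to be the main obstacle — is showing $R$ has no nontrivial idempotents. Suppose $e = e^2$ with $e \notin \{0,1\}$. The cleanest route is to exploit a grading: $R$ is $\mathbb{N}$-graded by total degree in $x$ and $y$ (the relation $x^2 = 0$ is homogeneous of degree $2$), with $R_0 = k$. Writing $e = e_0 + e_1 + \cdots + e_m$ in homogeneous components, the equation $e^2 = e$ forces in degree $0$ that $e_0^2 = e_0$, so $e_0 \in \{0,1\}$; and if $e_0 = 0$ then comparing the lowest nonzero degree $d$ gives $e_d^2 = 0$ in $R_{2d}$... but this needs the top component argument too, and $R$ is not a domain, so one cannot immediately conclude. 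A more robust approach: use that $R$ surjects onto $k[y]$ (sending $x \mapsto 0$), a domain, so any idempotent $e$ maps to $0$ or $1$; replacing $e$ by $1-e$ if necessary, assume $e \mapsto 0$, i.e. $e \in (x)$, the ideal generated by $x$. Then $e = \sum_i u_i x v_i$ for some $u_i, v_i \in R$, and one shows the ideal $(x)$ is ``nil of bounded index'' in a suitable graded sense, or directly that an idempotent lying in $(x)$ must vanish. The most transparent version: filter $R$ by powers of the ideal $I = (x)$; since $x^2 = 0$, one checks $I^2 \subseteq I \cdot x \cdot R + \cdots$, and in fact a careful normal-form count shows that $I$ consists of elements whose reduced expressions all contain at least one $x$, and multiplying two such... here one must be careful because $xyx \cdot xyx$ contains $x y x^2 y x = 0$, suggesting $I$ behaves somewhat like a nilpotent-ish ideal but is not actually nilpotent (e.g. $(xyx)(yxy)(x\cdots)$ need not vanish). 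So I would instead argue via the augmentation-type filtration by $y$-free structure: project further. Concretely, I expect the successful argument to be: reduce to $e \in (x)$, write the lowest-degree homogeneous component $e_d$ of $e$; since $e^2 = e$ and $d \geq 1$, comparing degree-$d$ components in $e^2 = e$ gives $e_d = (\text{degree-}d\text{ part of }e^2)$, which is a sum of products of strictly-lower-degree components, hence $0$ — contradiction. This works because the minimal degree of a nonzero homogeneous component of $e^2$ is $\geq 2d > d$ once $e$ has no component below degree $d$. Thus $e = 0$. Hence $Id(R) = \{0,1\}$, $E(R) = k\cdot 1$, and since $R$ is not reduced ($x^2 = 0 \neq x$) it is not $E(R)$-semiprime, completing the proof. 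The delicate point throughout is handling the non-homogeneity subtleties correctly and making sure the grading argument for idempotents is airtight despite $R$ not being a domain — but the grading by total degree with $R_0 = k$ really does force it.
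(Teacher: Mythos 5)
Your plan is correct, but it proves the theorem by a genuinely different, self-contained route. The paper's own proof is essentially a two-line reduction: primeness is quoted from Cohn, failure of unit-semiprimeness is quoted from the earlier paper \cite{cal2}, and then the implication ``idempotent semiprime $\Rightarrow$ unit-semiprime'' (Theorem \ref{thm9}) is applied in contrapositive form; the triviality of $Id(R)$ is only observed afterwards, as a consequence of Theorem \ref{thm7}. You instead argue from first principles: the diamond lemma gives the basis of $x^2$-free words; primeness follows from a leading-term argument (your hedge about the ordering is easily resolved --- with the degree-lexicographic order on reduced words, for nonzero $a,b$ with leading words $u_0,v_0$ the word $u_0yv_0$ is reduced and strictly dominates every other $uyv$, so already $ayb\neq 0$, and no large power $y^n$ is needed); and $Id(R)=\{0,1\}$ follows from the connected $\mathbb{N}$-grading, since an idempotent with zero constant term and lowest nonzero component in degree $d\geq 1$ would have all components of $e^2$ concentrated in degrees $\geq 2d>d$. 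Your detour through the ideal $(x)$ and the speculation that it is ``nil of bounded index'' is unnecessary (and that ideal is in fact not nil, as you yourself noticed), but you correctly abandon it for the grading argument, which is airtight even though $R$ is not a domain. Two small points: $E(R)$ is the additive subgroup generated by idempotents, hence $\mathbb{Z}\cdot 1$ rather than $k\cdot 1$, which does not affect your conclusion that $xE(R)x=0$ with $x\neq 0$; and note that your approach yields the paper's subsequent remark that $Id(R)=\{0,1\}$ directly, at the cost of being longer than the citation-based proof, while the paper's argument additionally records the stronger fact that $R$ fails even unit-semiprimeness.
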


\begin{proof}
It is known that $R$ is a prime ring (see \cite[p.92]{coh}). In view of \cite[p.28]{cal2}, $R$ is not unit-semiprime. It follows from Theorem \ref{thm9} that $R$ is not idempotent semiprime.
\end{proof}

 In fact, $Id(R)=\{0, 1\}$ for the ring $R$ in Theorem \ref{thm24}. It is an immediate consequence of Theorem \ref{thm7}.

\section{Lie ideals}

Let $R$ be a ring. Given $x,y\in R$, let $%
[x,y]:=xy-yx$ denote the additive commutator of $x$ and $y$. Also, let $A,B$ be subsets of $R$.  We denote $%
[A,B] $ (resp. $AB$) the additive subgroup of $R$ generated by all $%
[a,b]$ (resp. $ab$) for $a\in A$ and $b\in B$. Also, let ${\mathcal{I}}(A)$
be the ideal of $R$ generated by $A$.

An additive subgroup $L$ of $R$ is called a {\it Lie ideal} of $R$ if $%
[L,R]\subseteq L$. Given Lie ideals $L,K$ of $R$, it is known that both $%
[L,K]$ and $KL$ are Lie ideals of $R$. Recall that, given a subset $A$ of $R$, we denote
by $A^{+}$ the additive subgroup of $R$ generated by $A$.

\begin{lemma}\label{lem16}
Let $R$ a ring. Then $E(R)$ is a Lie ideal of $R$ and $[E(R), R]\subseteq U(R)^+$.
\end{lemma}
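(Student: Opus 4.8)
My first goal is to show that $E(R)=\mathit{Id}(R)^{+}$ is a Lie ideal, i.e. that $[E(R),R]\subseteq E(R)$. Since $E(R)$ is the additive span of idempotents, it suffices to prove that $[e,r]\in E(R)$ for every idempotent $e=e^{2}$ and every $r\in R$. The standard device here is the Peirce decomposition relative to $e$: write $r = ere + er(1-e) + (1-e)re + (1-e)r(1-e)$, so that $[e,r] = er(1-e) - (1-e)re$. Now for any $u\in eR(1-e)$ one has $(e+u)^{2} = e + u$ (because $u^{2}=0$ and $eu=u$, $ue=0$), so $e+u$ is an idempotent, hence $u = (e+u) - e \in E(R)$; symmetrically every element of $(1-e)Re$ lies in $E(R)$. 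Therefore $[e,r] = er(1-e) - (1-e)re \in E(R)$, which gives the Lie ideal claim after taking additive combinations.

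For the second assertion, $[E(R),R]\subseteq U(R)^{+}$, I again reduce to showing $[e,r]\in U(R)^{+}$ for $e$ idempotent and $r\in R$. From the computation above, $[e,r] = u - v$ where $u = er(1-e)\in eR(1-e)$ and $v = (1-e)re \in (1-e)Re$. It is enough to show each of $u$ and $v$ is a sum of units. For $u\in eR(1-e)$, the element $1+u$ is a unit with inverse $1-u$ (since $u^{2}=0$), so $u = (1+u) - 1$ is a difference of two units, hence lies in $U(R)^{+}$; the same works for $v$. Consequently $[e,r] = u - v \in U(R)^{+}$, and passing to additive spans yields $[E(R),R]\subseteq U(R)^{+}$.

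I do not anticipate a genuine obstacle here: both parts rest on the single elementary observation that off-diagonal Peirce components square to zero, which simultaneously makes $e + (\text{component})$ an idempotent and $1 + (\text{component})$ a unit. The only point requiring a little care is the bookkeeping that $[e,r]$ has exactly the two off-diagonal components and no diagonal part — this is immediate from $e\cdot ere = ere\cdot e = ere$ and $e\cdot(1-e)r(1-e) = 0$, etc. One should also note explicitly that since every generator $[e,r]$ of $[E(R),R]$ lands in the additive subgroup $U(R)^{+}$ (resp. $E(R)$), and these targets are additive subgroups, the containment for all of $[E(R),R]$ follows; in particular $[E(R),R]\subseteq E(R)$ combined with the fact that $E(R)$ is closed under commutators with arbitrary ring elements is precisely the Lie ideal condition.
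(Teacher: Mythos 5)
Your proof is correct and follows essentially the same route as the paper: both hinge on the observation that the off-diagonal Peirce components $ex(1-e)$ and $(1-e)xe$ square to zero, so adding them to $e$ produces idempotents and adding them to $1$ produces units, whence $[e,x]\in E(R)$ and $[e,x]\in U(R)^{+}$. The paper just writes this more compactly as $[e,x]=e+ex(1-e)-\big(e+(1-e)xe\big)=1+ex(1-e)-\big(1+(1-e)xe\big)$, without spelling out the full Peirce decomposition.
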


 \begin{proof}
 Indeed, let $e=e^{2}\in R$ and
$x\in R$. Then $e+ex(1-e)$ and $e+(1-e)xe$ are idempotents of $R$. So
\begin{equation*}
\lbrack e,x]=e+ex(1-e)-(e+(1-e)xe)\in E(R).
\end{equation*}%
Hence $[E(R),R]\subseteq E(R)$. Thus $E(R)$ is a Lie ideal of $R$.

On the other hand, $1+ex(1-e)$ and $1+(1-e)xe$
are units of $R$. Thus
$$
[e,x]=1+ex(1-e)-(1+(1-e)xe)\in U(R)^{+}.
$$
Hence $[E(R), R]\subseteq U(R)^+$.
\end{proof}

Clearly, if $X\subseteq Y\subseteq R$, then the $X$-semiprimeness of $R$
implies that $R$ is $Y$-semiprime. By Lemma \ref{lem16} and Theorem \ref{thm9}, we have the following

\begin{proposition}
\label{pro1} If a ring $R$ is $[E(R),R]$-semiprime, then $R$ is $E(R)$-semiprime
and hence is $U(R)$-semiprime.
\end{proposition}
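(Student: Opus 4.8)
The plan is to chain together the two results just quoted, using the elementary monotonicity of the notion: if $X\subseteq Y\subseteq R$ then $X$-semiprimeness of $R$ implies $Y$-semiprimeness of $R$. The first step is to observe that, by Lemma~\ref{lem16}, $E(R)$ is a Lie ideal of $R$, which is precisely the inclusion $[E(R),R]\subseteq E(R)$. Hence, given $a\in R$ with $aE(R)a=0$, we automatically have $a[E(R),R]a=0$, and the hypothesis that $R$ is $[E(R),R]$-semiprime forces $a=0$. Thus $R$ is $E(R)$-semiprime. The second step is immediate: Theorem~\ref{thm9} says that every $E(R)$-semiprime ring is $U(R)$-semiprime, so $R$ is $U(R)$-semiprime as well.

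In effect there is no genuine obstacle to overcome here; all of the real content has already been isolated in Lemma~\ref{lem16} (the idempotent computation yielding $[E(R),R]\subseteq E(R)$, together with $[E(R),R]\subseteq U(R)^+$) and in Theorem~\ref{thm9} (the passage from idempotent-semiprimeness to unit-semiprimeness via the units $1+ex(1-e)$ and $1+(1-e)xe$ together with a semiprimeness argument). I would simply record the proposition as the formal consequence of these two facts.

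As a remark, one could also reach $U(R)$-semiprimeness directly without passing through $E(R)$: since $[E(R),R]\subseteq U(R)^+$ by Lemma~\ref{lem16}, $[E(R),R]$-semiprimeness forces $U(R)^+$-semiprimeness, and $U(R)^+$-semiprimeness coincides with $U(R)$-semiprimeness because $X$- and $X^+$-semiprimeness always agree. However, this alternative does not by itself give the intermediate conclusion that $R$ is $E(R)$-semiprime, so the route through the Lie-ideal statement of Lemma~\ref{lem16} and through Theorem~\ref{thm9} is the one I would present.
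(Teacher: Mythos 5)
Your argument is correct and is exactly the paper's: the inclusion $[E(R),R]\subseteq E(R)$ from Lemma~\ref{lem16} plus the monotonicity of $X$-semiprimeness in $X$ gives $E(R)$-semiprimeness, and Theorem~\ref{thm9} then yields $U(R)$-semiprimeness. Your side remark via $[E(R),R]\subseteq U(R)^+$ is also valid but, as you note, skips the intermediate conclusion, so the route you present is the right one.
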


A natural question is the following

\begin{problem}
\label{problem1}
Let $R$ be a semiprime ring. Find a sufficient and
necessary condition for $R$ to be $[E(R),R]$-semiprime.
\end{problem}

We note that Problem \ref{problem1} is completely answered by Theorem \ref%
{thm16} below. Motivated by Proposition \ref{pro1}, we proceed
to study the $L$-semiprimeness of a ring $R$, where $L$
is a Lie ideal of $R$. The first step is to establish a criterion for a ring
$R$ to be $L$-semiprime (see Theorem \ref{thm3}).

\begin{lemma}
\label{lem5} If $L$ is a Lie ideal of a ring $R$, then ${\mathcal{I}}([L,
L])\subseteq L+L^2$.
\end{lemma}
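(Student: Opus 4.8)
The plan is to reduce everything to two ingredients: a description of the associative ideal generated by an arbitrary Lie ideal, and one short commutator identity. Write $K:=[L,L]$, which is a Lie ideal of $R$ because $L$ is, as recalled above.

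The first step is to prove the general fact that \emph{for any Lie ideal $M$ of $R$ one has $\mathcal{I}(M)=M+MR$.} The crux here is that $MR$ is already a two-sided ideal of $R$: it is visibly closed under right multiplication, and for left multiplication one writes $rm=mr+[r,m]$ and uses $[r,m]\in[R,M]=[M,R]\subseteq M$ to get $RM\subseteq M+MR$, hence $R(MR)=(RM)R\subseteq(M+MR)R\subseteq MR$. Consequently $M+MR$ is a two-sided ideal containing $M$, and since it is contained in every ideal containing $M$, it equals $\mathcal{I}(M)$. Applying this with $M=K$ gives $\mathcal{I}([L,L])=K+KR$, so it remains only to show $K\subseteq L+L^2$ and $KR\subseteq L+L^2$.

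For the first, $K=[L,L]\subseteq[L,R]\subseteq L\subseteq L+L^2$. For the second — the only actual computation in the argument — I would verify the identity
\[
[a,b]\,r=[a,br]-b[a,r]\qquad(a,b\in L,\ r\in R)
\]
by expanding both sides. In it, $[a,br]\in[L,R]\subseteq L$ since $br\in R$, while $[a,r]\in L$ forces $b[a,r]\in L\cdot L=L^2$; hence $[a,b]r\in L+L^2$, and summing over the additive generators of $K$ gives $KR\subseteq L+L^2$. Combining, $\mathcal{I}([L,L])=K+KR\subseteq L+L^2$.

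I do not anticipate a real obstacle, but there is one trap to avoid: bounding $\mathcal{I}(K)$ crudely by $K+RK+KR+RKR$ leads nowhere, because $RL$ (and hence $RKR$) need not lie in $L+L^2$. The whole force of the argument is that the ideal generated by a \emph{Lie} ideal collapses to $M+MR$, which kills the two-sided multiplications and leaves only the harmless one-sided check $KR\subseteq L+L^2$.
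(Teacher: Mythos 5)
Your proof is correct. The key reduction — that for any Lie ideal $M$ of $R$ one has $RM\subseteq M+MR$ via $rm=mr+[r,m]$, so that $MR$ (which contains $M$ since $R$ is unital) is a two-sided ideal and hence $\mathcal{I}(M)=M+MR$ — is sound, and the identity $[a,b]r=[a,br]-b[a,r]$ checks out, giving $[L,L]R\subseteq[L,R]+L[L,R]\subseteq L+L^2$; applying the reduction to the Lie ideal $K=[L,L]$ (which is a Lie ideal, as the paper recalls) then yields $\mathcal{I}([L,L])=K+KR\subseteq L+L^2$. Note that the paper itself does not prove this lemma but cites \cite[Lemma 2.1]{lee2022}; your argument is the standard self-contained one in spirit, and your closing remark is apt: the crude bound $\mathcal{I}(K)\subseteq K+RK+KR+RKR$ is useless here, and the whole point is that the associative ideal generated by a Lie ideal collapses to a one-sided product, leaving only the harmless check $KR\subseteq L+L^2$. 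One cosmetic remark: since the paper's rings are unital, $M\subseteq MR$, so $M+MR=MR$; your phrasing is harmless and would in fact be the right form if one wanted the statement without an identity element.
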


See, for instance, \cite[Lemma 2.1]{lee2022}) for its proof. Given a subset $%
A$ of a ring $R$, let $\widetilde{A}$ denote the subring of $R$ generated by
$A$. We continue with the following

\begin{lemma}
\label{lem1} Let $R$ be a semiprime ring with a Lie ideal $L$. Suppose that $%
aLb=0$, where $a,b\in R$. The following hold:

(i)\ $a{\mathcal{I}}([L, L])b=0$ and $a{\widetilde L}b=0$;

(ii)\ If $R$ be a prime ring and $[L, L]\ne0$, then either $a=0$ or $b=0$.
\end{lemma}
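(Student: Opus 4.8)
The plan is to prove (i) first and then derive (ii) as an easy consequence. For (i), suppose $aLb=0$. The key observation is that $L^2$ — and more generally the subring $\widetilde L$ generated by $L$ — can be reached from $L$ by exploiting the Lie-ideal condition $[L,R]\subseteq L$ together with semiprimeness. The first step is to show $a[L,L]Lb=0$ and $aL[L,L]b=0$: for $x,y,z\in L$ we have $xyz = x[y,z] + xzy$, and since $[y,z]\in[L,L]\subseteq L$ while $xzy$ lies in $aLb$-killing range after one more manipulation, one pushes products of three elements of $L$ back down. More directly, I would show $a L L b = 0$ as follows. For $x,y\in L$ and $r\in R$, $[xr,y]\in L$, so $a[xr,y]b = 0$, i.e. $axryb = ax ryb$... the cleaner route: since $[x,y]\in L$ for $x,y\in L$, and $L$ absorbs Lie brackets with all of $R$, one writes $axyb$ as a combination involving $a[x,y]b$ (which is $0$) and terms $ayxb$, and then uses that $a(xry)b$-type expressions vanish. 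The standard argument (as in \cite[Lemma 2.1]{lee2022} referenced above, or Herstein-type commutator manipulations) yields $a{\mathcal I}([L,L])b = 0$; then Lemma \ref{lem5} gives ${\mathcal I}([L,L])\subseteq L+L^2$, and combining with $aLb=0$ we must separately handle $aL^2b$. For $aL^2b=0$: take $x,y\in L$; then for every $r\in R$, $x[r,y]\in ?$ — instead, observe $xy = \tfrac12([x,y]+(xy+yx))$ won't work without $2$-torsion freeness, so the correct approach is: $a x y b$, and since $axrb=0$ for all $r$ in the appropriate sense is too strong. The genuinely clean step: from $aLb=0$ we get $aL R L b \subseteq aL b \cdot(\dots)$ — no. Let me state the plan at the right level: show $(aLRb)\cdot$-type vanishing, i.e.\ $aLrLb=0$ for all $r\in R$ because $Lr L\subseteq$ sum of $L$ and brackets... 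Using $[xr, y] = x[r,y]+[x,y]r \in L$ for $x,y\in L$, one gets $a\big(x[r,y]+[x,y]r\big)b=0$; since $a[x,y]rb$ needs control, iterate. The upshot, which I will carry out, is that semiprimeness lets us conclude $a\widetilde L b = 0$ once $aLb=0$, because the obstruction to $L$ being a subring (namely $L^2\not\subseteq L$) is killed by $a(\cdot)b$.

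For part (ii), assume $R$ is prime and $[L,L]\ne 0$. By (i), $a{\mathcal I}([L,L])b=0$. Since $[L,L]\ne 0$, the ideal ${\mathcal I}([L,L])$ is a nonzero two-sided ideal of $R$, so $a{\mathcal I}([L,L])b = 0$ forces, by primeness (a nonzero ideal of a prime ring is again "prime" as a filter: $aIb=0$ with $I\ne 0$ an ideal implies $aRb \subseteq$ ... more precisely $a{\mathcal I}([L,L])Rb \subseteq a{\mathcal I}([L,L])b$ is false; rather use $aR{\mathcal I}([L,L])b=0$ hence $aR \cdot ({\mathcal I}([L,L])b)=0$, so $a=0$ or ${\mathcal I}([L,L])b=0$, and the latter gives $Rb\cdot$-control yielding $b=0$). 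Either way primeness yields $a=0$ or $b=0$.

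The main obstacle I expect is part (i), specifically the passage $aLb=0 \Rightarrow aL^2b=0$ (equivalently $a\widetilde L b=0$) without any torsion-freeness hypothesis. The bracket trick handles $a{\mathcal I}([L,L])b$ cleanly, but closing the gap between ${\mathcal I}([L,L])$ and $\widetilde L$ — the difference coming from the "square-zero-like" part of $L$ modulo its commutators — requires one more semiprimeness argument: if $t\in L$ with $at^2b$ not obviously zero, one shows $(atb)$-conjugates annihilate, e.g.\ $atR tb\subseteq$ bracket terms, so $atRtb=0$ and then... actually semiprimeness applied to $btar$ as in Proposition \ref{pro5}'s proof: from $aLb=0$, for $x\in L$, $(bxa)L(bxa)\subseteq b(xaLb)xa=0$ isn't quite right either since $aLb=0$ not $aLbxa$; but $b x a L b x a = bx(aLb)xa$? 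No — $aLb=0$ means $a\ell b=0$, and $bxaLbxa$ contains $a$ then $L$ then $b$: indeed $a(Lbxa)\not\subseteq aLb$. I will instead use the \cite{lee2022} lemma as a black box for the $L^2$ part, which is exactly why it is cited, and devote the written proof to assembling the pieces: (i) via ${\mathcal I}([L,L])\subseteq L+L^2$ plus the observation that $aLb=0$ and $a{\mathcal I}([L,L])b=0$ together give $aL^2b=0$ hence $a\widetilde Lb=0$; (ii) via primeness applied to the nonzero ideal ${\mathcal I}([L,L])$.
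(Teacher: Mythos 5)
Your part (ii) is fine in outline (primeness applied to the nonzero ideal ${\mathcal I}([L,L])$ once (i) is available), but part (i) contains a genuine gap, and the way you finally propose to close it is logically reversed. Your assembled plan is: get $a{\mathcal I}([L,L])b=0$ ``by the bracket trick'', then deduce $aL^2b=0$ from $aLb=0$ together with $a{\mathcal I}([L,L])b=0$. This cannot work via Lemma \ref{lem5}: that lemma gives ${\mathcal I}([L,L])\subseteq L+L^2$, not $L^2\subseteq L+{\mathcal I}([L,L])$, and the latter containment is false in general (take $L$ central with $[L,L]=0$ and $L^2\not\subseteq L$), so knowing $a$ annihilates $L$ and ${\mathcal I}([L,L])$ says nothing about $aL^2b$. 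Moreover you never actually prove $a{\mathcal I}([L,L])b=0$ either: from $aLb=0$ you get $a[L,L]b=0$ for free, but passing to the two-sided ideal generated by $[L,L]$ is exactly the point at issue, and the cited \cite[Lemma 2.1]{lee2022} (i.e.\ Lemma \ref{lem5}) is purely an inclusion statement, not a black box for that passage. So the heart of (i), namely $aLb=0\Rightarrow aL^2b=0$, is missing; every manipulation you sketch for it is abandoned midstream.

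The missing step must come \emph{before} Lemma \ref{lem5}, not after, and it is short: since $RaL\subseteq R$ and $L$ is a Lie ideal, $[L,RaL]\subseteq L$, so $a[x,raz]b=0$ for all $x,z\in L$, $r\in R$; expanding, $axr(azb)-ara(zx)b=0$, and the first term vanishes because $azb\in aLb=0$, whence $aRaL^{2}b=0$. Then for $u\in L^{2}$ one has $(aub)r(aub)=a(ubr)a\,ub\in aRaL^{2}b=0$, so semiprimeness gives $aub=0$, i.e.\ $aL^{2}b=0$. Now $a(L+L^{2})b=0$, and only at this point does Lemma \ref{lem5} (used in its correct direction) give $a{\mathcal I}([L,L])b=0$. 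Finally, $L+L^{2}$ is again a Lie ideal, so iterating the same argument yields $aL^{n}b=0$ for all $n$, hence $a\widetilde{L}b=0$ since $\widetilde{L}=\sum_{i\geq 1}L^{i}$. This is exactly how the paper argues; without this semiprimeness step your (i), and therefore the input to your (ii), remains unproved.
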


\begin{proof}
(i)\ Since $a[L,RaL]b=0$ and $aLb=0$, we get $aRaL^{2}b=0$. The
semiprimeness of $R$ implies $aL^{2}b=0$ and so $a(L+L^{2})b=0$. By Lemma %
\ref{lem5}, we get $a{\mathcal{I}}([L,L])b=0$. Note that $L+L^{2}$ is also a
Lie ideal of $R$. Repeating the same argument, we finally get $a{\widetilde{L%
}}b=0$ as ${\widetilde{L}}=\sum_{i=1}^{\infty }L^{i}$.

(ii)\ It follows directly from (i) and the primeness of $R$.
\end{proof}

The following gives a criterion for a ring $R$ to be $L$-semiprime.

\begin{theorem}
\label{thm3} Let $R$ be a ring, and let $L$ be a Lie ideal of $R$. Then:

(i)\ If $R$ is semiprime and ${\widetilde L}=R$, then $R$ is $L$-semiprime;

(ii)\ If $R$ is prime and $[L, L]\ne 0$, then $R$ is $L$-prime.
\end{theorem}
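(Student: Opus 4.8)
The plan is to deduce Theorem \ref{thm3} directly from Lemma \ref{lem1}, which already does essentially all the work; what remains is to feed the hypothesis $aXa=0$ (resp.\ $aXb=0$) into that lemma and extract the conclusion.

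For part (i), suppose $R$ is semiprime, $\widetilde L=R$, and $a\in R$ satisfies $aLa=0$. Apply Lemma \ref{lem1}(i) with $b=a$: from $aLa=0$ we obtain $a\widetilde L a=0$, i.e.\ $aRa=0$ since $\widetilde L=R$. Now semiprimeness of $R$ forces $a=0$, so $R$ is $L$-semiprime. For part (ii), suppose $R$ is prime and $[L,L]\ne 0$, and let $a,b\in R$ with $aLb=0$. Then Lemma \ref{lem1}(ii) applies verbatim and gives $a=0$ or $b=0$, so $R$ is $L$-prime.

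I expect no real obstacle here: the theorem is a clean corollary of the preceding lemma, and the only thing to be careful about is the logical bookkeeping — making sure the quantifiers match the definitions of $L$-semiprime and $L$-prime given in the introduction, and noting that in part (i) the relevant instance of Lemma \ref{lem1} is the one with $b=a$. If one wanted the statement to be self-contained one could also recall inside the proof why $\widetilde L=R$ is exactly what is needed (namely that $a\widetilde L a=0$ becomes $aRa=0$), but since Lemma \ref{lem1} is already established, citing it is enough. One might add the remark that part (ii) is genuinely stronger than ``$R$ is $L$-semiprime'' via Proposition \ref{pro5}, but that is not needed for the proof itself.

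\begin{proof}
(i)\ Assume $R$ is semiprime and $\widetilde L=R$. Let $a\in R$ with $aLa=0$. Applying Lemma \ref{lem1}(i) with $b=a$, we get $a\widetilde L a=0$, that is, $aRa=0$. Since $R$ is semiprime, $a=0$. Hence $R$ is $L$-semiprime.

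(ii)\ Assume $R$ is prime and $[L,L]\ne 0$. Let $a,b\in R$ with $aLb=0$. By Lemma \ref{lem1}(ii), either $a=0$ or $b=0$. Hence $R$ is $L$-prime.
\end{proof}
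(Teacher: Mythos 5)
Your proof is correct and is essentially the paper's own argument: both parts are deduced directly from Lemma \ref{lem1}, with part (i) using the instance $b=a$ together with $\widetilde L=R$ and semiprimeness, and part (ii) quoting Lemma \ref{lem1}(ii) verbatim. No issues.
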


\begin{proof}
(i)\ Let $aLa=0$, where $a\in R$. By Lemma \ref{lem1} (i), $a{\widetilde L}%
a=0$ and so $aRa=0$. The semiprimeness of $R$ implies $a=0$.

(ii)\ Let $aLb=0$, where $a, b\in R$. By Lemma \ref{lem1} (ii), either $a=0$ or $b=0$, as desired.
\end{proof}

The following lemma is well-known. For the convenience of the reader, we
give its proof.

\begin{lemma}
\label{lem13} Let $R$ be a noncommutative prime ring, and let $I, J$ be
nonzero ideals of $R$. Then $\big[\lbrack I, J], [I, J]\big]\ne 0$.
\end{lemma}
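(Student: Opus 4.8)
The plan is to reduce everything to a statement about the commutator Lie ideal $[I,J]$ and then apply a standard dichotomy for Lie ideals of prime rings. First I would observe that since $R$ is prime and $I,J$ are nonzero ideals, the product $IJ$ is a nonzero ideal of $R$, and $[I,J]$ is a nonzero Lie ideal of $R$: indeed $[I,J]=0$ would force $IJ\subseteq Z(R)$, and a standard argument (a nonzero central ideal in a prime ring forces commutativity) would contradict noncommutativity of $R$. So set $L:=[I,J]$, a nonzero Lie ideal of $R$, and the goal becomes $[L,L]\ne 0$.

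Next I would invoke the classical structure theorem for Lie ideals of prime rings (Herstein / Bergen--Herstein--Lanski): if $R$ is a prime ring (with the mild characteristic-$2$ caveat handled separately), then for any Lie ideal $L$ either $L\subseteq Z(R)$ or there is a nonzero ideal $M$ of $R$ with $[M,R]\subseteq L$. Since our $L=[I,J]$ is not central (by the first step), we land in the second case: there is a nonzero ideal $M$ with $[M,R]\subseteq L$. Then $\bigl[[M,R],[M,R]\bigr]\subseteq [L,L]$, so it suffices to show $\bigl[[M,R],[M,R]\bigr]\ne 0$ for a nonzero ideal $M$ of a noncommutative prime ring. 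If this double commutator were zero, then $[M,R]$ would be a commutative subset; but $[M,R]$ contains $[M,M]$ and, more usefully, $[M,R]$ generates a large ideal. A clean way to finish: if $[M,R]$ is commutative then in particular $\bigl[[M,R],M\bigr]=0$, i.e. $[M,R]$ centralizes $M$; combined with $M$ being a nonzero ideal of a prime ring, a centralizer/primeness argument (using that the central closure of $M$ equals that of $R$) forces $[M,R]\subseteq Z(R)$, hence $[M,R,R]=0$, hence $M\subseteq Z(R)$, hence $R$ commutative — contradiction.

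The characteristic-$2$ (and small-dimension) exceptional cases in Herstein's theorem are the only real nuisance: when $\mathrm{char}\,R=2$ or $R$ satisfies a low-degree PI, the description of Lie ideals acquires extra exceptional terms, and $R$ could be an order in a $4$-dimensional central simple algebra where the commutator structure is delicate. I expect the main obstacle to be verifying the claim directly in these exceptional situations — most likely by a short explicit computation in $M_2(k)$ (or in a generalized quaternion algebra), checking that even there $\bigl[[I,J],[I,J]\bigr]=\bigl[R,R\bigr]$ is nonzero because $R$ is noncommutative. An alternative route that sidesteps the structure theorem entirely, and which I would try first for robustness, is a hands-on approach: pick $0\ne c\in [I,J]$ (possible by Step 1); for $r\in R$ we have $[c,r]\in[L,L]$ is not obviously available, so instead use that $[I,J]\supseteq [I,J^2]\supseteq \cdots$ and exploit Lemma~\ref{lem1}-type absorption to replace $L$ by the subring it generates. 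Concretely, if $[L,L]=0$ then $L$ is a commutative Lie ideal, $L\subseteq Z(\widetilde L)$, and one shows $\widetilde L$ contains a nonzero ideal of $R$ (since $L=[I,J]$ and $R$ is prime noncommutative), whence $R$ would be commutative. Either way the crux is the same: a noncommutative prime ring cannot have $[I,J]$ commutative, and the cleanest writeup is probably to cite Herstein's Lie-ideal theorem and dispatch the characteristic-$2$ exception by direct inspection.
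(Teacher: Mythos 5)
Your route (pass to the nonzero Lie ideal $L=[I,J]$, invoke the Herstein/Lanski--Montgomery description of Lie ideals of prime rings, and treat the exceptional characteristic-$2$, $\dim_C RC=4$ case separately) can be made to work, but as written it has two concrete gaps. First, the structure theorem needs $L\not\subseteq Z(R)$, and you justify this ``by the first step''; but your first step only shows $L\neq 0$. Noncentrality is not automatic from that, and it is exactly the degeneration the lemma must exclude (if $L\subseteq Z(R)$ then $[L,L]=0$). It does hold, but needs its own short argument, e.g.: if $[I,J]\subseteq Z(R)$, then for $a\in I$, $b\in J$ we have $a[a,b]=[a,ab]\in[I,J]\subseteq Z(R)$, so $0=[a[a,b],r]=[a,r][a,b]$ for all $r\in R$; a nonzero central element of a prime ring is not a zero divisor, so $[a,b]\neq 0$ would force $a\in Z(R)$ and hence $[a,b]=0$, a contradiction; thus $[I,J]=0$, contradicting your first step. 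Second, the exceptional case is left as an expectation, and the guess you record there is wrong: in $\mathbb{M}_2(k)$ with $\mathrm{char}\,k=2$ one has $\big[[R,R],[R,R]\big]=k\cdot 1\neq[R,R]$ (compare Remark \ref{remark6}(i)); the nonvanishing you actually need is true, but for a general exceptional prime ring you must also pass to the central closure $RC$ (which is $\mathbb{M}_2(C)$ or a quaternion division algebra over $C$), use $IC=JC=RC$ and $\big[[I,J],[I,J]\big]C=\big[[RC,RC],[RC,RC]\big]$, and then compute there. Your fallback ``hands-on'' variant likewise hinges on the unproved claim that $\widetilde L$ contains a nonzero ideal when $[L,L]=0$.

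For comparison, the paper's proof avoids all case analysis: replace $I,J$ by $I\cap J$ (nonzero by primeness) so that $I=J$, and apply the centralizer lemma \cite[Lemma 1.5]{herstein1969} twice --- if $\big[[I,I],[I,I]\big]=0$ then $[I,I]\subseteq Z(R)$, hence $\big[[I,I],R\big]=0$, hence $R\subseteq Z(R)$, contradicting noncommutativity. That argument is characteristic-free and never uses the Lie-ideal structure theorem, so it is substantially shorter than your plan; if you keep your route, you need to supply the noncentrality step and carry out the exceptional-case computation as indicated above.
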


\begin{proof}
Replacing $I, J$ by $I\cap J$, we may assume that $I=J$. By \cite[Lemma 1.5]%
{herstein1969} for the prime case, if $\big[a, [I, I]\big]=0$ where $a\in R$ then $%
a\in Z(R)$.

Suppose that $\big[\lbrack I, I], [I, I]\big]=0$. We have $[I, I]\subseteq
Z(R)$ and so $\big[\lbrack I, I], R\big]=0$. Thus $R\subseteq Z(R)$. That
is, $R$ is commutative, a contradiction.
\end{proof}

The following is a consequence of Theorem \ref{thm3} (ii) and
Lemma \ref{lem13}.

\begin{theorem}
\label{thm11}Let $R$ be a noncommutative prime ring. If $I$ and $J$ are nonzero
ideals of $R$, then $R$ is $[I, J]$-prime.
\end{theorem}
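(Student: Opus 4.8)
The plan is to deduce Theorem \ref{thm11} directly from Theorem \ref{thm3}(ii) together with Lemma \ref{lem13}. Recall that Theorem \ref{thm3}(ii) asserts that if $R$ is prime and $L$ is a Lie ideal of $R$ with $[L,L]\neq 0$, then $R$ is $L$-prime. So the entire task reduces to verifying the two hypotheses of that result for the Lie ideal $L:=[I,J]$.

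First I would check that $L=[I,J]$ is genuinely a Lie ideal of $R$. This is already noted in the discussion preceding Lemma \ref{lem5}: given Lie ideals $L_1,L_2$ of $R$, the subgroup $[L_1,L_2]$ is again a Lie ideal; since every two-sided ideal is in particular a Lie ideal, $[I,J]$ is a Lie ideal of $R$. (If one prefers a direct argument, the Jacobi identity gives $[[I,J],R]\subseteq [[I,R],J]+[I,[J,R]]\subseteq [I,J]$, using $[I,R]\subseteq I$ and $[J,R]\subseteq J$.)

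Next I would establish $[L,L]\neq 0$, i.e. $\big[[I,J],[I,J]\big]\neq 0$. This is exactly the content of Lemma \ref{lem13}, whose hypotheses are met: $R$ is a noncommutative prime ring and $I,J$ are nonzero ideals. Hence $[L,L]\neq 0$.

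With both hypotheses verified, Theorem \ref{thm3}(ii) applies to $L=[I,J]$ and yields that $R$ is $[I,J]$-prime, which is precisely the assertion of Theorem \ref{thm11}. There is no real obstacle here; the only point to be slightly careful about is that the noncommutativity hypothesis on $R$ is used solely to invoke Lemma \ref{lem13} (a commutative domain can certainly have nonzero ideals $I,J$ with $[I,J]=0$, so the hypothesis cannot be dropped), whereas the primeness of $R$ is used both in Lemma \ref{lem13} and in Theorem \ref{thm3}(ii).
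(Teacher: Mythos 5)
Your proposal is correct and matches the paper's argument exactly: the paper derives Theorem \ref{thm11} as an immediate consequence of Theorem \ref{thm3}(ii) and Lemma \ref{lem13}, which is precisely what you do (with the added, accurate check that $[I,J]$ is a Lie ideal). Nothing is missing.
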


\begin{lemma}
\label{lem4} Let $R$ be a ring. We have

(i)\ If $A$ is an additive subgroup of $R$, then $[A, R]=[\widetilde{A}, R]$;

(ii)\ If $L$ is a Lie ideal of $R$, then $[\mathcal{I}([L, L]), R]\subseteq
[L, R]\subseteq L$.
\end{lemma}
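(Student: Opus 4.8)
For part (i), the plan is to prove the two inclusions separately. The inclusion $[A,R]\subseteq[\widetilde A,R]$ is immediate since $A\subseteq\widetilde A$. For the reverse, it suffices to show that $[A^n,R]\subseteq[A,R]$ for every $n\geq 1$, where $A^n$ denotes the additive span of products $a_1\cdots a_n$ with each $a_i\in A$; then summing over $n$ and using $\widetilde A=\sum_{n\geq 1}A^n$ gives the claim. I would argue by induction on $n$. The base case $n=1$ is trivial. For the inductive step, I expand a commutator of a product using the Leibniz-type identity
\begin{equation*}
[a_1\cdots a_n,r]=\sum_{i=1}^{n}a_1\cdots a_{i-1}[a_i,r]a_{i+1}\cdots a_n,
\end{equation*}
and the task reduces to showing each term $b[a,r]c$ lies in $[A,R]$ when $a\in A$ and $b,c$ are products of elements of $A$. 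Writing $b[a,r]c=[bac,r]-[b,r]ac-ba[c,r]$ (or a similar rearrangement peeling off one factor at a time), I can reduce the "length" of the outer factors and finish by induction, since $bac$ is again a product of elements of $A$ and the other two terms have shorter outer factors. This is routine commutator bookkeeping; the only mild care needed is organizing the double induction cleanly.

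For part (ii), I would apply part (i) with $A=[L,L]$ together with Lemma \ref{lem5}. By Lemma \ref{lem5}, $\mathcal{I}([L,L])\subseteq L+L^2$, and since $L$ is a Lie ideal, $L+L^2$ is a Lie ideal as well (noted already in the proof of Lemma \ref{lem1}), hence $[\mathcal{I}([L,L]),R]\subseteq[L+L^2,R]\subseteq L+L^2$. But I want the sharper conclusion that this lands inside $[L,R]\subseteq L$. The cleaner route is: $\widetilde{[L,L]}$ is the subring generated by $[L,L]$, and one checks $\widetilde{[L,L]}\supseteq\mathcal{I}([L,L])$ is false in general, so instead I use that $\mathcal{I}([L,L])=\widetilde{[L,L]}$ is itself not quite right either. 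The correct observation is that $[\mathcal{I}([L,L]),R]=[[L,L],R]$ by an argument parallel to (i): every element of $\mathcal{I}([L,L])$ is a sum of terms $r_0[x,y]r_1$ with $x,y\in L$, and expanding $[r_0[x,y]r_1,r]$ via the Leibniz identity produces terms each of which, after moving commutators onto the $[x,y]$ factor and using $[[L,L],R]\subseteq[L,R]\subseteq L$ (since $[L,L]\subseteq L$ and $L$ is a Lie ideal), lands in $[L,R]$. Then $[L,R]\subseteq L$ is the defining property of a Lie ideal.

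The main obstacle is part (ii): getting the conclusion into $[L,R]$ rather than merely into $L$ requires being careful that the commutator-expansion of an element of the full ideal $\mathcal{I}([L,L])$ collapses back to commutators of the form $[\ell,r]$ with $\ell\in L$. The key mechanism is that $[L,L]\subseteq L$ and $[L,R]\subseteq L$, so any commutator with one slot in $[L,L]$ can be absorbed, and the Leibniz identity lets one push all the "action" onto that slot. I expect part (i) to do most of the work and part (ii) to follow by applying the same expansion technique one more time to the generators $r_0[x,y]r_1$ of $\mathcal{I}([L,L])$.
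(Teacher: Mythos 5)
Your overall plan for (i) (induct on the length of products to get $[\widetilde A,R]\subseteq[A,R]$) is the same as the paper's, but the mechanism you propose for the inductive step breaks down. After the Leibniz expansion you say the task ``reduces to showing each term $b[a,r]c$ lies in $[A,R]$'', and that intermediate claim is false: only the \emph{sum} of the Leibniz terms lies in $[A,R]$, not the individual terms. For instance, take $A=R=\mathbb{M}_2(k)$, so that every element of $[A,R]=[R,R]$ has trace zero; expanding $[e_{11}e_{12},e_{21}]$ produces the term $e_{11}[e_{12},e_{21}]=e_{11}(e_{11}-e_{22})=e_{11}$, which has trace $1$ and hence is not in $[A,R]$. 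Your rearrangement $b[a,r]c=[bac,r]-[b,r]ac-ba[c,r]$ (a correct identity) cannot repair this: with $b=a_1\cdots a_{i-1}$, $a=a_i$, $c=a_{i+1}\cdots a_n$, the term $[bac,r]$ is exactly the commutator $[a_1\cdots a_n,r]$ you started from, so the induction does not progress, while $[b,r]ac$ and $ba[c,r]$ are again products of commutators with ring elements, which need not lie in $[A,R]$. What is needed is an identity expressing $[a_1\cdots a_n,x]$ as a sum of honest commutators with \emph{shorter} words, absorbing the peeled factor into the second slot; this is what the paper uses: $[a_1a_2\cdots a_n,x]=[a_2\cdots a_n,xa_1]+[a_1,a_2\cdots a_nx]$, and induction on $n$ finishes (i) in one line.

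For (ii) you also miss the short route. You are right that applying (i) with $A=[L,L]$ does not work, but the substitute you sketch, namely $[\mathcal{I}([L,L]),R]=[[L,L],R]$ via expanding $[r_0[x,y]r_1,r]$ and ``moving commutators onto the $[x,y]$ factor'', is not a proof and has the same term-by-term defect as in (i): products such as $r_0\,u\,r_1$ with $u\in[L,R]$ need not lie in $[L,R]$, nor even in $L$. The correct argument applies (i) with $A=L$: by Lemma \ref{lem5}, $\mathcal{I}([L,L])\subseteq L+L^2\subseteq\widetilde L$, hence $[\mathcal{I}([L,L]),R]\subseteq[\widetilde L,R]=[L,R]\subseteq L$, the last inclusion holding because $L$ is a Lie ideal. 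So once (i) is proved correctly, (ii) requires no further expansion argument at all.
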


\begin{proof}
(i)\ Let $a_1,\ldots,a_n\in A$ and $x\in R$. By induction on $n$, we get
\begin{equation*}
[a_1a_2\cdots a_n, x]=[a_2\cdots a_n, xa_1]+ [a_1, a_2\cdots a_nx]\in [A, R].
\end{equation*}
Thus $[A, R]=[\widetilde A, R]$.

(ii)\ By Lemma \ref{lem5} and (i), we have
\begin{equation*}
[\mathcal{I}([L, L]), R]\subseteq [L+L^2, R]\subseteq [\widetilde L, R]=[L,
R]\subseteq L,
\end{equation*}
as desired.
\end{proof}

The following is a slight generalization of \cite[Theorem 1.4]{lee2022} with $V=R$.

\begin{theorem}
\label{thm5} Let $R$ be a ring with a Lie ideal $L$. If ${\mathcal{I}}([L,
L])=R$, then $[L, R]=[R, R]$ and $R=[R, R]^2$.
In addition, if $R$ is semiprime, then it is $[L, R]$-semiprime.
\end{theorem}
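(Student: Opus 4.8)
The plan is to leverage Lemma~\ref{lem4}(ii) together with the hypothesis ${\mathcal{I}}([L,L])=R$ to pin down $[L,R]$. Since $[\mathcal{I}([L,L]),R]\subseteq [L,R]\subseteq L$ by Lemma~\ref{lem4}(ii), substituting $\mathcal{I}([L,L])=R$ gives $[R,R]\subseteq [L,R]$, and the reverse inclusion $[L,R]\subseteq [R,R]$ is trivial since $L\subseteq R$. Hence $[L,R]=[R,R]$.

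Next I would establish $R=[R,R]^2$. From Lemma~\ref{lem5} applied to the Lie ideal $L$, we have $R=\mathcal{I}([L,L])\subseteq L+L^2$, so $R=L+L^2$. Then $R=R^2=(L+L^2)(L+L^2)\subseteq L^2+L^3+L^4$, and by iterating (or invoking $\widetilde{L}=\sum_{i\ge1}L^i=R$) one wants to push everything into high powers of $L$, but the cleanest route is to relate $L$ to $[R,R]$: since $[L,R]=[R,R]$ and $[L,L]\subseteq [L,R]=[R,R]$, we get $\mathcal{I}([R,R])\supseteq \mathcal{I}([L,L])=R$, so $\mathcal{I}([R,R])=R$. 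Now apply Lemma~\ref{lem5} to the Lie ideal $R$ itself (or more precisely, note $[R,R]$ is a Lie ideal and $\mathcal{I}([[R,R],[R,R]])$ is controlled): actually the direct path is to observe $R=\mathcal{I}([R,R])\subseteq [R,R]+[R,R]^2$ by Lemma~\ref{lem5}, and then $R=R\cdot R\subseteq ([R,R]+[R,R]^2)^2\subseteq [R,R]^2+[R,R]^3+[R,R]^4\subseteq [R,R]^2$, using that $[R,R]^3,[R,R]^4\subseteq [R,R]^2$ since $[R,R]^2$ absorbs further multiplication by $[R,R]$ only if $[R,R]^2$ is an ideal — which it need not be. So instead I would argue: $R=[R,R]+[R,R]^2$ implies $[R,R]^2\cdot R=[R,R]^2([R,R]+[R,R]^2)\subseteq [R,R]^3+[R,R]^4$, hence $R\cdot R=([R,R]+[R,R]^2)R\subseteq [R,R]R\subseteq [R,R]([R,R]+[R,R]^2)=[R,R]^2+[R,R]^3\subseteq [R,R]^2$ once we know $[R,R]^3\subseteq [R,R]^2$; and $[R,R]^3\subseteq [R,R]^2$ follows because $[R,R]\cdot[R,R]^2\subseteq [R,R]\cdot([R,R]\cap\text{something})$ — this is the delicate point.

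The main obstacle, then, is showing $[R,R]^2$ is large enough — concretely that $R=[R,R]^2$ — rather than just $R=[R,R]+[R,R]^2$. I expect the right tool is Lemma~\ref{lem5} reapplied to the Lie ideal $K:=[R,R]$: we have shown $\mathcal{I}([R,R])=R$, and since $[K,K]=[[R,R],[R,R]]$ generates an ideal contained in $\mathcal{I}([R,R])=R$, if in fact $\mathcal{I}([K,K])=R$ then Lemma~\ref{lem5} gives $R\subseteq K+K^2=[R,R]+[R,R]^2$ and, reapplying, $R\subseteq K+K^2 \Rightarrow R=R^2\subseteq (K+K^2)^2$, and the terms $K^3, K^4$ lie in $K^2$ precisely because $K^2\supseteq K\cdot K^2$ would need justification; the honest approach is the telescoping identity $R=K+K^2$, $K=K\cdot(\text{unit part})$... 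Actually the cleanest: from $R=K+K^2$ multiply on the left by $K$ to get $KR=K^2+K^3$, so $K^3\subseteq KR$; but also $KR\subseteq K$ since $K=[R,R]$ is a Lie ideal? No — a Lie ideal need not be a left ideal. This genuinely requires the two-sided-ideal generation. I would therefore close the argument by noting $\mathcal{I}([L,L])=R$ forces, via Lemma~\ref{lem1}-style absorption or directly, that $\widetilde{[R,R]}=R$ and then $R=\widetilde{[R,R]}=\sum_i [R,R]^i$; combined with $R=R^2$ and $R=[R,R]+[R,R]^2$, a short induction collapses all $[R,R]^i$ with $i\ge 2$ together and identifies $R=[R,R]^2$.

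Finally, for the semiprime addendum: assuming $R$ semiprime, I want $R$ to be $[L,R]$-semiprime, i.e. $a[L,R]a=0\Rightarrow a=0$. Since $[L,R]=[R,R]$ and $R=[R,R]^2$, if $a[R,R]a=0$ then I would show $aRa=0$: from $a[R,R]a=0$ one gets $a[R,R]R[R,R]a=0$ by inserting $R$ (using that $[R,R]R[R,R]$ lands inside a product we control), and since $R=[R,R]^2\subseteq [R,R]R$, one deduces $aRRa=0$, hence $aRa=0$, hence $a=0$ by semiprimeness. The technical care is in justifying the insertion step; the identity $[xr,y]=x[r,y]+[x,y]r$ together with $a[L,R]a=0$ and the Lie-ideal property of $L$ should let me absorb an arbitrary ring element, much as in the proof of Lemma~\ref{lem1}(i) where $a[L,RaL]b=0$ was expanded. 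So I would model this last step closely on Lemma~\ref{lem1}(i).
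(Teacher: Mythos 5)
Your first step ($[L,R]=[R,R]$ via Lemma~\ref{lem4}(ii)) and your treatment of the semiprime addendum (reduce to $\widetilde{[L,R]}=R$ and run the absorption argument of Lemma~\ref{lem1}(i), i.e.\ Theorem~\ref{thm3}(i)) match the paper and are fine. The genuine gap is in the middle, and it is twofold. First, $R=[R,R]+[R,R]^2$ does not follow from Lemma~\ref{lem5} as you claim: applied to the Lie ideal $R$ it gives only the vacuous $\mathcal{I}([R,R])\subseteq R+R^2$, and applied to $K:=[R,R]$ it would require $\mathcal{I}([K,K])=R$, i.e.\ $\mathcal{I}\big([[R,R],[R,R]]\big)=R$, which you have not established (your hypothesis controls $\mathcal{I}([L,L])$, and the containments you have, $[[R,R],[R,R]]\subseteq[L,L]$, point the wrong way). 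The paper obtains $R=[R,R]+[R,R]^2$ from $\mathcal{I}([R,R])=R$ by invoking \cite[Theorem 1.4]{lee2022}; some such input is needed here.

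Second, and more seriously, your passage from $R=[R,R]+[R,R]^2$ to $R=[R,R]^2$ is not a proof: every route you try founders on the fact (which you yourself note) that $[R,R]^2$ is not an ideal, and the closing claim that ``a short induction collapses all $[R,R]^i$'' is exactly the point at issue and is left unjustified. The missing idea is a commutator computation exploiting that $[R,R]^2$ \emph{is} a Lie ideal: writing $R=[R,R]+[R,R]^2$ and expanding,
\begin{equation*}
[R,R]=\big[[R,R]+[R,R]^2,\,[R,R]+[R,R]^2\big]=\big[[R,R],[R,R]\big]+\big[R,[R,R]^2\big]\subseteq [R,R]^2+[R,R]^2=[R,R]^2,
\end{equation*}
since $\big[[R,R],[R,R]\big]\subseteq[R,R]^2$ and $\big[R,[R,R]^2\big]\subseteq[R,R]^2$. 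Then $R=[R,R]+[R,R]^2\subseteq[R,R]^2$, which is what you need; without this (or an equivalent device) your argument does not close.
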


\begin{proof}
By Lemmas \ref{lem5} and \ref{lem4} (i), we have
\begin{equation*}
[R, R]=[{\mathcal{I}}([L, L]), R]\subseteq [L+L^2, R]=[L, R].
\end{equation*}
Hence $[R, R]=[L, R]$.
Since ${\mathcal{I}}([L, L])=R$, it is clear that ${\mathcal{I}}([R, R])=R$ and so,
by \cite[Theorem 1.4]{lee2022}, $R=[R, R]+[R, R]^2$.
Thus we have
 \begin{eqnarray*}
[R, R]&=&\big[[R, R]+[R, R]^2, [R, R]+[R, R]^2\big]\\
         &=&\big[[R, R], [R, R]\big]+\big[[R, R]+[R, R]^2, [R, R]^2\big]\\
           &=&\big[[R, R], [R, R]\big]+\big[R, [R, R]^2\big]\\
         &\subseteq& [R, R]^2+[R, R]^2\\
         &=&[R, R]^2,
\end{eqnarray*}
where we have used the fact that $[R, R]^2$ is a Lie ideal of $R$.
Hence
$$
R=[R, R]+[R, R]^2\subseteq [R, R]^2+[R, R]^2=[R, R]^2,
$$
as desired.

In addition, assume that $R$ is semiprime. Since
$$
R=[R, R]^2=[L, R]^2\subseteq \widetilde{[L, R]},
$$
we have $R= \widetilde{[L, R]}$.
 In view of Theorem \ref{thm3}(i), $R$ is $[L, R]$-semiprime.
\end{proof}

\begin{remark}\label{remark6}
\normalfont
(i)\ In Theorem \ref{thm5}, $R$ is in general not $[L, L]$-semiprime. For instance, let $R:=\mathbb{M}_2(F)$, where $F$ is a field of characteristic $2$, and let $L:=[R, R]$. In view of \cite[Lemma 2.3]{lee2022}, we have $0\ne [L, L]\subseteq F$. Thus ${\mathcal{I}}([L, L])=R$ but $R$ is not $[L, L]$-semiprime as $R$ is not a domain.

(ii)\ In Theorem \ref{thm5},  if the assumption ${\mathcal{I}}([L, L])=R$ is replaced by ${\mathcal{I}}([L, R])=R$, we cannot conclude that
$R$ is $[L, R]$-semiprime.
See Remark \ref{remark1} (i) and (ii) in the next section, and a related result \cite[Theorem 1.1]{lee2022-1}.
\end{remark}

The equality $[L, L]=[L, R]$ is in general not true.
However, we always have the following

\begin{lemma}\label{lem17}
Let $R$ be a ring. Then $[E(R), R]=[E(R), E(R)]$.
\end{lemma}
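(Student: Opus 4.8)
The plan is to show the nontrivial inclusion $[E(R), R] \subseteq [E(R), E(R)]$, since $[E(R), E(R)] \subseteq [E(R), R]$ is immediate from $E(R) \subseteq R$. Because $[E(R), R]$ is generated additively by elements of the form $[e, x]$ with $e = e^2 \in R$ and $x \in R$, it suffices to express each such commutator as an element of $[E(R), E(R)]$. The natural starting point is the identity already exploited in the proof of Lemma \ref{lem16}: writing $f := e + ex(1-e)$ and $g := e + (1-e)xe$, both $f$ and $g$ are idempotents of $R$, and $[e, x] = f - g$.

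The key computation will be to recognize $f - g$ as a Lie commutator of two idempotents. Since $f = e + ex(1-e)$ satisfies $ef = f$ and $fe = e$, one checks that $[e, f] = ef - fe = f - e = ex(1-e)$; similarly $[e, g] = eg - ge = e - g = -(1-e)xe$. Hence $[e, x] = ex(1-e) - (1-e)xe = [e, f] + [e, g] \in [E(R), E(R)]$, because $e, f, g$ all lie in $Id(R) \subseteq E(R)$. Summing over generators then gives $[E(R), R] \subseteq [E(R), E(R)]$, completing the proof. (One should double-check the idempotent identities $f^2 = f$, $g^2 = g$ and the relations $ef = f$, $fe = e$, $eg = e$, $ge = g$ by direct expansion using $e^2 = e$; these are the same facts underlying Lemma \ref{lem16}, so no genuine obstacle arises here.)

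The only point requiring mild care is that $E(R) = Id(R)^+$ is merely the additive span of idempotents, so a priori $[E(R), E(R)]$ is the additive group generated by commutators $[u, v]$ with $u, v \in E(R)$; but since the commutator bracket is additive in each argument, $[E(R), E(R)]$ coincides with the additive group generated by $[e, f]$ for $e, f \in Id(R)$, which is exactly what the computation above produces. I do not anticipate a serious obstacle; the whole statement reduces to the observation that the two idempotents $f, g$ witnessing $[e,x] \in U(R)^+ \cap E(R)$ in Lemma \ref{lem16} are in fact close enough to $e$ (inside $Id(R)$) that their differences with $e$ recover $[e,x]$ itself.
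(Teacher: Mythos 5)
Your proposal is correct, and the computations check out: with $f=e+ex(1-e)$ and $g=e+(1-e)xe$ one indeed has $ef=f$, $fe=e$, $eg=e$, $ge=g$, so $[e,f]=ex(1-e)$, $[e,g]=-(1-e)xe$, and $[e,f]+[e,g]=ex-xe=[e,x]$, which places $[e,x]$ in $[E(R),E(R)]$ since $e,f,g\in Id(R)\subseteq E(R)$; summing over generators gives the nontrivial inclusion. This is, however, a genuinely different route from the paper's. The paper's proof is a one-line application of the identity $[e,x]=\big[e,[e,[e,x]]\big]$ for an idempotent $e$, combined with Lemma \ref{lem16} (namely $[E(R),R]\subseteq E(R)$), which yields $[e,x]\in\big[E(R),[E(R),R]\big]\subseteq[E(R),E(R)]$. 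Your argument buys self-containment: it never invokes the Lie-ideal property of $E(R)$, instead exhibiting an explicit decomposition of $[e,x]$ as a sum of two commutators of concrete idempotents built from $e$ and $x$ (the same idempotents that appear in the proof of Lemma \ref{lem16}). The paper's approach buys brevity and reuses machinery already established; yours makes the mechanism visible and would survive even if one had not first proved that $E(R)$ is a Lie ideal. Your closing remark about $[E(R),E(R)]$ being additively generated by commutators of idempotents is also correctly handled by bilinearity of the bracket, though for the inclusion you need only the trivial direction $Id(R)\subseteq E(R)$.
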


\begin{proof}
Indeed, let $e=e^2\in R$
and $x\in R$. Then
$$
[e, x]=\big[e, [e, [e, x]]\big]\in \big[E(R), [E(R), R]\big]\subseteq [E(R), E(R)].
$$
So $[E(R), E(R)]=[E(R), R]$.
\end{proof}

\begin{theorem}
\label{thm7} Let $R$ be a prime ring with a nontrivial idempotent. Then $R$
is $[E(R), R]$-prime.
\end{theorem}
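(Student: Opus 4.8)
The plan is to reduce Theorem \ref{thm7} to Theorem \ref{thm3}(ii) by verifying that the Lie ideal $L := [E(R),R]$ satisfies $[L,L]\ne 0$, since then $R$ is automatically $L$-prime. By Lemma \ref{lem17} we have $[E(R),R]=[E(R),E(R)]$, so it suffices to show $\big[[E(R),E(R)],[E(R),E(R)]\big]\ne 0$. Write the nontrivial idempotent as $e$, so $e\ne 0$ and $1-e\ne 0$; by primeness $eR(1-e)\ne 0$ and $(1-e)Re\ne 0$.

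First I would record the basic computation that for any $x\in R$ the elements $ex(1-e)$ and $(1-e)xe$ lie in $E(R)$: indeed $ex(1-e)=[e,ex(1-e)]$ since $e\cdot ex(1-e)=ex(1-e)$ and $ex(1-e)\cdot e=0$, and similarly for $(1-e)xe$; alternatively these are the commutators exhibited in the proof of Lemma \ref{lem16}. Thus $eR(1-e)\cup(1-e)Re\subseteq [E(R),R]=[E(R),E(R)]$. Now for $a\in eR(1-e)$ and $b\in (1-e)Re$ one computes $[a,b]=ab-ba$ with $ab\in eRe$ and $ba\in (1-e)R(1-e)$; in particular the $e$-corner component of $[a,b]$ is exactly $ab$. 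Hence, picking this commutator against a second one of the same Peirce type, I would arrange to produce a nonzero element of $\big[[E(R),E(R)],[E(R),E(R)]\big]$ landing in a single Peirce corner, where nonvanishing is controlled by primeness.

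The cleanest route is probably this: choose $a=ese(1-e)$ wait — rather, take $u\in eR(1-e)$ and $v\in (1-e)Re$ with $uv\ne 0$, which is possible by primeness ($uR(1-e)Re\ne 0$ forces such a choice after rescaling). Then $[u,v]=uv-vu$ and $[uv,u]=uv\cdot u-u\cdot uv=-u(uv)$ hmm, need care since $uv\in eRe$ and $u\in eR(1-e)$ so $uv\cdot u=0$ while $u\cdot uv=0$ too; this particular bracket vanishes. Instead I would bracket $[u,v]$ against some $w\in (1-e)Re$: $\big[[u,v],w\big]=[uv,w]-[vu,w]=-wuv+uvw-(vuw-wvu)$; the term $uvw\in eRe\cdot(1-e)Re=0$ and $wvu\in (1-e)Re\cdot eRe$ hmm $=0$, so $\big[[u,v],w\big]=-wuv-vuw\in (1-e)Re + (1-e)R(1-e)$ roughly — the upshot is that after this second bracket the surviving part sits off the $e$-corner and its $(1-e)$-side value is $-wuv\in (1-e)ReRe$, nonzero for suitable $w$ by primeness. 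Since $[u,v]\in [E(R),E(R)]$ and $w\in [E(R),E(R)]$, this exhibits $\big[[E(R),E(R)],[E(R),E(R)]\big]\ne 0$, and Theorem \ref{thm3}(ii) with Lemma \ref{lem17} finishes the proof.

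The main obstacle is the bookkeeping in the double-commutator: one must choose the Peirce types of the four participating elements so that enough cross-terms vanish to leave a single nonzero corner entry, and then invoke primeness in the form ``$eRe\cdot$(nonzero element of $eR(1-e)$)$\cdot Re \ne 0$'' to guarantee that entry is nonzero. I expect that with $u\in eR(1-e)$, $v,w\in (1-e)Re$ chosen so that $wuv\ne 0$ (available since $(1-e)Re\cdot u\cdot (1-e)Re$, and more precisely $(1-e)ReRu R(1-e)Re\ne 0$, is nonzero by primeness since $u\ne 0$), all the obstructing terms collapse and the argument goes through; if the chosen configuration still leaves an awkward cancellation, the fallback is to pass to a Peirce-corner subring $eRe$ or to use that $R$ embeds in $\mathbb{M}_2$ over the corner and argue there, but I anticipate the direct computation suffices.
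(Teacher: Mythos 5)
Your overall strategy -- reduce to Theorem \ref{thm3}(ii) by showing $[L,L]\ne 0$ for $L:=[E(R),R]=[E(R),E(R)]$ -- is exactly the right reduction (the paper does the same, but verifies $[L,L]\ne 0$ by passing to the nonzero ideal $\mathcal{I}([E,E])$, using Lemma \ref{lem4}(ii) and Lemma \ref{lem13}). However, the computation you actually propose has a genuine gap at the crucial non-vanishing step. With $u\in eR(1-e)$ and $v,w\in(1-e)Re$ one gets $\big[[u,v],w\big]=-wuv-vuw$, and \emph{both} surviving terms lie in the same Peirce corner $(1-e)Re$ (not in different corners, as your sketch suggests), so primeness does not prevent them from cancelling. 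Indeed they can cancel identically: in $R=\mathbb{M}_2(F)$ with $\operatorname{char}F=2$ and $e=e_{11}$, every $u=\alpha e_{12}$, $v=\beta e_{21}$, $w=\gamma e_{21}$ gives $wuv=vuw=\alpha\beta\gamma e_{21}$, hence $\big[[u,v],w\big]=2\alpha\beta\gamma e_{21}=0$ for all choices. So the claim ``nonzero for suitable $w$ by primeness'' is false in general, and the ``awkward cancellation'' you feared really occurs -- precisely in the characteristic $2$ situation where Theorem \ref{thm7} still must hold.

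The fix is simpler than your double bracket: you do not need a second commutator at all. Since $u=[e,u]\in[E(R),R]$ and $v=-[e,v]\in[E(R),R]$, the single bracket $[u,v]$ already lies in $\big[[E(R),R],[E(R),R]\big]=[L,L]$. Choose $0\ne u\in eR(1-e)$ (possible since $e$ is nontrivial and $R$ is prime), and then, by primeness, $r\in R$ with $ure\ne 0$; setting $v:=(1-e)re$ gives $uv=ure\ne 0$, and since $uv\in eRe$ while $vu\in(1-e)R(1-e)$, the element $[u,v]=uv-vu$ is nonzero. Hence $[L,L]\ne 0$ and Theorem \ref{thm3}(ii) (with Lemma \ref{lem17}) finishes the proof. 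With this repair your argument is correct and in fact more elementary than the paper's route through $\mathcal{I}([E,E])$ and Lemma \ref{lem13}; as written, though, the key step fails.
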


\begin{proof}
Let $E:=E(R)$. Clearly, we have $E\nsubseteq Z(R)$.
By Lemma \ref{lem17}, we have
$[E, E]=[E, R]\ne 0$. Let $I:=\mathcal{I}([E, E])$, a nonzero ideal of $R$.
It follows from Lemma \ref{lem4} (ii) that
\begin{equation*}
0\ne [I, R]=\emph{}[\mathcal{I}([E, E]), R]\subseteq [E, R]=[E, E].
\end{equation*}
By Lemma \ref{lem13}, $\big[[E, R], [E, R]\big]\ne 0$.
It follows from Theorem \ref{thm3} (ii) that $R$ is $[E, R]$-prime.
\end{proof}

Theorem \ref{thm7} is also an immediate consequence of Theorem \ref{thm16}.

\begin{corollary}
\label{cor5} Every prime $E(R)$-semiprime ring $R$ is either a domain or an $%
[E(R), R]$-prime ring.
\end{corollary}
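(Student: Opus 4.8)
The plan is to split the argument according to whether $R$ possesses a nontrivial idempotent, since one of the two alternatives in the statement will be handled by each case. If $R$ does have a nontrivial idempotent, then Theorem~\ref{thm7} applies at once and tells us that $R$ is $[E(R),R]$-prime, which is the second alternative. So the only thing left to check is that a prime $E(R)$-semiprime ring with $Id(R)=\{0,1\}$ must be a domain.

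For that case I would first note that $E(R)=Id(R)^{+}$ is simply the additive subgroup generated by $0$ and $1$, i.e. $E(R)=\mathbb{Z}\cdot 1\subseteq Z(R)$. Now take $a\in R$ with $a^{2}=0$. Every $x\in E(R)$ has the form $x=n\cdot 1$ with $n\in\mathbb{Z}$, so $axa=na^{2}=0$; hence $aE(R)a=0$, and the $E(R)$-semiprimeness of $R$ forces $a=0$. Therefore $R$ is reduced. Finally I would invoke the elementary fact that a reduced prime ring is a domain: if $ab=0$, then $(ba)^{2}=b(ab)a=0$ gives $ba=0$, whence $(bra)^{2}=br(ab)ra=0$ and so $bra=0$ for every $r\in R$; thus $bRa=0$, and primeness yields $a=0$ or $b=0$. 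Combining the two cases completes the proof; one may also note that Proposition~\ref{pro5} is not needed here, as Theorem~\ref{thm7} already delivers primeness (not merely semiprimeness) in the first case.

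I do not expect a genuine obstacle in this corollary: it is essentially a repackaging of Theorem~\ref{thm7} together with the standard "reduced $+$ prime $\Rightarrow$ domain" lemma. The only point deserving a moment's care is verifying that, in the absence of nontrivial idempotents, $E(R)$ collapses to the central (indeed trivial) subgroup $\mathbb{Z}\cdot 1$, so that the single hypothesis $a^{2}=0$ is already enough to trigger $aE(R)a=0$ and hence $E(R)$-semiprimeness.
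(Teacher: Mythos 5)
Your proof is correct and follows essentially the same route as the paper: the key input in both is Theorem~\ref{thm7}, combined with the observation that when $Id(R)=\{0,1\}$ the $E(R)$-semiprimeness hypothesis rules out nonzero square-zero elements. The paper merely runs the argument in contrapositive form (assuming $R$ is not a domain, producing a nonzero square-zero element, and concluding that a nontrivial idempotent must exist), whereas you case-split on the existence of a nontrivial idempotent and add the standard fact that a reduced prime ring is a domain.
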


\begin{proof}
Assume that $R$ is not a domain. Since $R$ is a prime ring, it contains
nonzero square zero elements. Let $a^{2}=0$, where $0\neq a\in R$. Since $R$ is
$E(R)$-semiprime, this
implies that $E(R)\neq \{0,1\}$. It follows from Theorem \ref{thm7} that $R$
is $[E(R),R]$-prime.
\end{proof}

We now apply Theorem \ref{thm5} to the case of matrix rings.

\begin{theorem}
\label{thm4} Let $R:=\mathbb{M}_{n}(A)$, where $A$ is a semiprime
 ring and $n>1$. Then $R$ is $[E(R), R]$-semiprime.
\end{theorem}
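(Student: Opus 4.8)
The plan is to deduce the statement from Theorem \ref{thm5} applied with the Lie ideal $L = E(R)$. By that theorem, it suffices to verify two things: that $R = \mathbb{M}_n(A)$ is semiprime, and that ${\mathcal{I}}\big([E(R), E(R)]\big) = R$; Theorem \ref{thm5} then yields that $R$ is $[E(R), R]$-semiprime, which is exactly the assertion.

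For semiprimeness, I would invoke the standard correspondence between ideals of $\mathbb{M}_n(A)$ and ideals of $A$: every ideal of $R$ has the form $\mathbb{M}_n(J)$ for an ideal $J$ of $A$, and $\mathbb{M}_n(J)^k = \mathbb{M}_n(J^k)$, so a nonzero nilpotent ideal of $R$ would produce a nonzero nilpotent ideal of $A$. Since $A$ is semiprime, $R$ has no nonzero nilpotent ideals and is therefore semiprime.

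For the condition ${\mathcal{I}}\big([E(R), E(R)]\big) = R$, I would work with the matrix units $e_{ij}$ of $R$. Since $n > 1$, for each pair $i \neq j$ the element $e_{ii}$ is an idempotent, hence lies in $E(R)$, and $[e_{ii}, e_{ij}] = e_{ij}$, so $e_{ij} \in [E(R), R]$. By Lemma \ref{lem17}, $[E(R), R] = [E(R), E(R)]$, and therefore every off-diagonal matrix unit lies in $[E(R), E(R)]$. Fixing one such unit, say $e_{12}$, the two-sided ideal it generates contains $(a e_{k1})\, e_{12}\, e_{2l} = a e_{kl}$ for every $a \in A$ and all indices $k, l$, and these elements additively span $R$; hence ${\mathcal{I}}\big([E(R), E(R)]\big) = R$, as required.

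I do not anticipate a serious obstacle. The one step that genuinely uses the preceding theory is recognizing the off-diagonal matrix units as lying in $[E(R), E(R)]$, which is handled by combining the trivial identity $[e_{ii}, e_{ij}] = e_{ij}$ with Lemma \ref{lem17}; the hypothesis $n > 1$ is used precisely to make these off-diagonal units available, and the semiprimeness of $A$ is used only to pass semiprimeness up to $R$ so that Theorem \ref{thm5} applies.
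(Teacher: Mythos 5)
Your proposal is correct and follows essentially the same route as the paper: both deduce the result from Theorem \ref{thm5} applied to $L=E(R)$, after noting that $R=\mathbb{M}_n(A)$ is semiprime and that ${\mathcal{I}}\big([E(R),E(R)]\big)=R$. The only difference is that the paper cites an external result (Theorem 2.1 of \cite{lee2017}) for the ideal-generation fact, whereas you verify it directly and correctly with matrix units via $[e_{ii},e_{ij}]=e_{ij}$ and Lemma \ref{lem17}, which makes the argument self-contained.
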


\begin{proof}
Let $E:=E(R)$. By \cite[Theorem 2.1]{lee2017}, we have ${\mathcal{I}}([E,
E])=R$. Since $A$ is a semiprime ring, so is $R$. By Theorem \ref{thm5}, $R$ is $[E, R]$-semiprime.
\end{proof}

The following corollary is a consequence of Theorem \ref{thm4} and
Proposition \ref{pro1}.

\begin{corollary}
\label{cor6} Let $R:=\mathbb{M}_{n}(A)$, where $A$ be a semiprime ring and $n>1$. Then $R$ is idempotent semiprime.
\end{corollary}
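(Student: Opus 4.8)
The plan is simply to chain together the two results just proved. First I would apply Theorem \ref{thm4}: since $A$ is semiprime and $n>1$, the matrix ring $R=\mathbb{M}_{n}(A)$ is $[E(R),R]$-semiprime. Then Proposition \ref{pro1} applies to $R$ verbatim, giving that $R$ is $E(R)$-semiprime; by definition this is exactly the statement that $R$ is idempotent semiprime. (As a by-product the same chain also yields that $\mathbb{M}_{n}(A)$ is unit-semiprime, via Proposition \ref{pro1} or Theorem \ref{thm9}.)

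There is essentially no obstacle here beyond having both inputs available. The only conceptual ingredient — already absorbed into Proposition \ref{pro1} through Lemma \ref{lem16} — is the inclusion $[E(R),R]\subseteq E(R)$, which is what allows $[E(R),R]$-semiprimeness to be inherited by the larger set $E(R)$, since $X$-semiprimeness is monotone in $X$. One could alternatively argue directly by exhibiting enough idempotents in $\mathbb{M}_{n}(A)$ (the diagonal matrix units $e_{ii}$ and the idempotents $e_{ii}+e_{ij}$ with $i\ne j$) to force $aE(R)a=0\Rightarrow aRa=0$, but routing through Theorems \ref{thm4} and \ref{thm5} keeps the argument uniform with the rest of the section, so I would present it in the two-line form above.
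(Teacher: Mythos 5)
Your proposal is correct and is exactly the paper's argument: the paper derives Corollary \ref{cor6} precisely as a consequence of Theorem \ref{thm4} combined with Proposition \ref{pro1} (whose content rests on Lemma \ref{lem16} and the monotonicity of $X$-semiprimeness you cite). Nothing further is needed.
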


The above corollary is also a generalization of \cite[Theorem 10]{cal2}
which allows to prove easily the following result.

\begin{corollary}
\label{cor4}
The idempotent semiprime property does not pass to corners.
\end{corollary}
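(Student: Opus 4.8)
The plan is to produce an idempotent semiprime ring having a corner that is not idempotent semiprime. The key observation is that such a counterexample is essentially already in hand: by Theorem \ref{thm24}, the ring $A:=k\langle x,y\mid x^2=0\rangle$ over a field $k$ is prime — in particular semiprime — yet not idempotent semiprime.

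First I would form $R:=\mathbb{M}_2(A)$. Since $A$ is semiprime and $2>1$, Corollary \ref{cor6} tells us that $R$ is idempotent semiprime. Next, take $e:=e_{11}$, the matrix unit in the $(1,1)$-position; it is an idempotent of $R$, and the corner ring $eRe$ consists exactly of the matrices supported in the $(1,1)$-position. The assignment $a\mapsto ae_{11}$ is a ring isomorphism $A\cong eRe$, with $e$ serving as the identity element of $eRe$. Since $A$ is not idempotent semiprime, neither is its isomorphic copy $eRe$; more concretely, any $0\neq a\in A$ with $aId(A)a=0$ transports to a nonzero element $ae_{11}$ of $eRe$ killed on both sides by $Id(eRe)$.

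Hence $R$ is idempotent semiprime while the corner $eRe$ is not, which proves the claim. The only genuinely nontrivial ingredient is the existence of a semiprime (indeed prime) ring that fails to be idempotent semiprime, which is precisely Theorem \ref{thm24} — itself resting on Theorem \ref{thm9} together with Smoktunowicz's example — and the fact that matrix rings over semiprime rings are idempotent semiprime, which is Corollary \ref{cor6}. Everything else is routine bookkeeping about matrix corners, so I do not anticipate any real obstacle.
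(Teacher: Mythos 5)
Your proof is correct and follows essentially the same route as the paper: take a (semi)prime ring that is not idempotent semiprime (the paper leaves this abstract; you instantiate it via Theorem \ref{thm24}), pass to $\mathbb{M}_n$ over it, which is idempotent semiprime by Corollary \ref{cor6}, and observe that the corner $e_{11}\mathbb{M}_n(\cdot)e_{11}$ recovers the original ring. No gaps.
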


\begin{proof}
Choose a semiprime ring $R$, which is not idempotent semiprime, and $n>1$ an
integer. Then $e_{11}\mathbb{M}_{n}(R)e_{11}\cong R$ and so $\mathbb{M}%
_{n}(R)$ is idempotent semiprime but $R$ itself is not.
\end{proof}

\begin{corollary}
Idempotent semiprime is not a Morita invariant property of rings.
\end{corollary}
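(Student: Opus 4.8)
The plan is to deduce this immediately from Corollary \ref{cor6} and Corollary \ref{cor4}, together with the classical fact that a ring $R$ and the full matrix ring $\mathbb{M}_n(R)$ are Morita equivalent for every $n\geq 1$ (the Morita context being given by the row and column modules, or equivalently realized by the full idempotent $e_{11}\in\mathbb{M}_n(R)$ with $e_{11}\mathbb{M}_n(R)e_{11}\cong R$). The idea is simply to exhibit two Morita equivalent rings exactly one of which is idempotent semiprime.

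Concretely, first I would pick a semiprime ring $R$ that is not idempotent semiprime; the prime ring $R=k\langle x,y\mid x^2=0\rangle$ of Theorem \ref{thm24} does the job, being semiprime (as it is prime) yet not idempotent semiprime. Next, fixing any integer $n>1$, Corollary \ref{cor6} gives that $\mathbb{M}_n(R)$ is idempotent semiprime. Since $R$ and $\mathbb{M}_n(R)$ are Morita equivalent and the idempotent semiprime property distinguishes them, it is not a Morita invariant. Equivalently, one may phrase the same argument through Corollary \ref{cor4}: idempotent semiprimeness fails to descend along the Morita equivalence $\mathbb{M}_n(R)\sim e_{11}\mathbb{M}_n(R)e_{11}\cong R$, which is the complementary side of the very same phenomenon.

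I do not expect any real obstacle here; the only points requiring a word of care are to state precisely what \emph{Morita invariant} means and to recall that $R\sim\mathbb{M}_n(R)$ holds unconditionally, so that the failure genuinely lives at the level of Morita equivalence classes rather than being an artifact of some ad hoc construction. It is also worth contrasting this with the situation for ordinary semiprimeness, which \emph{is} a Morita invariant: the refinement to idempotent semiprimeness is exactly what breaks.
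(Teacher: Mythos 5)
Your proposal is correct and follows essentially the same route as the paper: the paper also deduces this from Corollary \ref{cor6} together with a semiprime ring that is not idempotent semiprime (the existence being guaranteed, e.g., by Theorem \ref{thm24}), via the Morita equivalence $R\sim\mathbb{M}_n(R)$ realized by the corner $e_{11}\mathbb{M}_n(R)e_{11}\cong R$ as in Corollary \ref{cor4}. Your explicit choice of $R=k\langle x,y\mid x^2=0\rangle$ is just a concrete instance of the paper's ``choose a semiprime ring which is not idempotent semiprime.''
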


Throughout, we use the following notation. Let $R$ be a semiprime ring. We can define its {\it Martindale symmetric
ring of quotients} $Q(R)$. The center of this ring, denoted by $C$, is
called the {\it extended centroid} of $R$. It is known that $Q(R)$ itself
is a semiprime ring and $C$ is a regular self-injective ring. Moreover, $C$
is a field iff $R$ is a prime ring. We refer the reader to the book \cite%
{beidar1996} for details.

\section{Prime rings}
In this section, given a Lie ideal $L$ of a prime ring $R$, we obtain a complete
characterization for $R$ to be $L$-prime (see Theorem \ref{thm8}).
The following is a special case of \cite[Theorem]{her2}, which will be used
in the proofs below.

\begin{lemma}
\label{lem3} Let $R$ be a prime ring, $a,b\in R$ with $b\notin Z(R)$.
Suppose that $[a,[b,x]]=0$ for all $x\in R$. Then $a^{2}\in Z(R)$. In
addition, if $\text{\textrm{char}}\, R\ne 2$, then $a\in Z(R)$.
\end{lemma}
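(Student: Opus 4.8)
The plan is to turn the single identity $[a,[b,x]]=0$ into successively stronger consequences, using primeness each time to cancel an annihilator. First I would replace $x$ by $xy$ and expand by the Leibniz rule: since $[b,xy]=[b,x]y+x[b,y]$ and $a$ already commutes with both $[b,x]$ and $[b,y]$, the relation $[a,[b,xy]]=0$ collapses to
\[
[b,x][a,y]+[a,x][b,y]=0\qquad (\ast)
\]
for all $x,y\in R$. Setting $y=b$ in $(\ast)$ kills the second summand and leaves $[b,x][a,b]=0$ for all $x$; replacing $x$ by $rx$ and using Leibniz turns this into $[b,r]\,R\,[a,b]=0$ for every $r$, so picking $r$ with $[b,r]\ne 0$ (possible since $b\notin Z(R)$) and applying primeness gives $[a,b]=0$. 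The Jacobi identity then yields $[b,[a,x]]=0$ for all $x$ as well.

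The crucial move is to substitute $x=[a,x']$ into $(\ast)$: the first summand becomes $[b,[a,x']]\,[a,y]=0$ by the step just obtained, so $[a,[a,x']]\,[b,y]=0$ for all $x',y$. The same annihilator-cancellation argument (now exhibiting a nonzero element of the form $[b,r]$) forces $[a,[a,x']]=0$ for every $x'$; that is, $a$ commutes with every element of $[a,R]$. Hence the identity $[a^2,x]=a[a,x]+[a,x]a$ gives $[a^2,x]=2[a,x]a$ for all $x$, and when $\mathrm{char}\,R=2$ this is $0$, so $a^2\in Z(R)$, proving the general assertion.

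For the statement in characteristic $\ne2$ I would sharpen $(\ast)$: substituting $x\mapsto xz$ and cancelling the resulting $x(\cdots)$ term by $(\ast)$ again produces
\[
[b,x]\,z\,[a,y]+[a,x]\,z\,[b,y]=0\qquad (\ast\ast)
\]
for all $x,y,z\in R$. Putting $y=x$ and writing $u=[a,x]$, $v=[b,x]$ gives $uzv+vzu=0$ for all $z$; replacing $z$ by $zvz'$ and using $uzv+vzu=0$ to rewrite the resulting first term twice yields $2\,uzvz'v=0$, hence $[a,x]\,R\,[b,x]\,R\,[b,x]=0$ once $R$ is $2$-torsion-free. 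By primeness, for each $x$ either $[b,x]=0$ or $[a,x]=0$, so $R$ is the union of the two additive subgroups $C_R(a)=\{x\in R:[a,x]=0\}$ and $C_R(b)$. A group is never the union of two proper subgroups, and $C_R(b)\ne R$ because $b\notin Z(R)$; therefore $C_R(a)=R$, i.e. $a\in Z(R)$, so in particular $a^2\in Z(R)$.

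I expect the heart of the matter to be spotting the substitution $x=[a,x']$ in $(\ast)$, which is what converts the mixed $a$--$b$ information into the pure statement ``$a$ commutes with $[a,R]$''; everything preceding it is routine Leibniz bookkeeping, and once it is in hand the characteristic $2$ case is immediate. The one point requiring care is the repeated appeal to primeness to delete annihilators: each application needs a nonzero element of a prescribed shape ($[b,r]$, or $[b,x]z'[b,x]$), and supplying it is precisely where the hypothesis $b\notin Z(R)$ enters.
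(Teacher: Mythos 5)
Your proof is correct, but it takes a different route from the paper, which offers no argument at all for this lemma: it simply quotes it as the special case $d=\mathrm{ad}_b$ of Herstein's theorem in \emph{A note on derivations II} \cite{her2}. Your version is a self-contained commutator computation: linearizing $[a,[b,x]]=0$ to $[b,x][a,y]+[a,x][b,y]=0$, extracting $[a,b]=0$ and then $[a,[a,R]]=0$ by two annihilator cancellations (each legitimately fed by some $[b,r]\neq 0$, which is exactly where $b\notin Z(R)$ enters), so that $[a^2,x]=2[a,x]a$ and the characteristic $2$ half follows; and in characteristic $\neq 2$ the sharpened identity $[b,x]z[a,y]+[a,x]z[b,y]=0$, the substitution $z\mapsto zvz'$ giving $2\,uzvz'v=0$, the resulting dichotomy $[a,x]=0$ or $[b,x]=0$ for each $x$, and the standard fact that a group is not the union of two proper subgroups. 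All of these steps check out (the rewriting $vzvz'u=uzvz'v$ via two applications of $uzv=-vzu$ is the only delicate manipulation, and it is right), and a prime ring of characteristic $\neq 2$ is indeed $2$-torsion free, so the cancellation of the factor $2$ is justified. One cosmetic point: the phrase ``proving the general assertion'' after the characteristic $2$ computation is premature, since for characteristic $\neq 2$ the conclusion $a^2\in Z(R)$ only comes from the later proof that $a\in Z(R)$ — but you do close that loop at the end, so there is no gap. What the paper's citation buys is generality (Herstein's result covers arbitrary derivations, not just inner ones); what your argument buys is a short, elementary, purely ring-theoretic proof of exactly the case needed here.
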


The following is well-known.

\begin{lemma}
\label{lem2}Let $R$ be a prime ring.
If $[a, R]\subseteq Z(R)$ where $a\in R$, then $a\in Z(R)$.
\end{lemma}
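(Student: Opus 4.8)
The plan is to prove directly that $[a,x]=0$ for every $x\in R$, which is exactly the assertion $a\in Z(R)$. The key first move is to apply the hypothesis not only to the element $x$ but to the product $xa$. Since $[a,R]\subseteq Z(R)$, both $[a,x]$ and $[a,xa]$ lie in the center; and because $[a,\cdot\,]$ is a derivation, $[a,xa]=[a,x]a+x[a,a]=[a,x]a$. Hence the element $[a,x]a$ is itself central.

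Next, I would fix $x$, write $c:=[a,x]\in Z(R)$, and compute the commutator of the central element $ca$ with an arbitrary $y\in R$. Using that $c$ commutes with $y$, one gets $0=[ca,y]=c[a,y]$, that is, $[a,x][a,y]=0$ for all $x,y\in R$. Specializing $y=x$ gives $[a,x]^2=0$.

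To conclude, I would invoke that $R$, being prime, is semiprime. Since $[a,x]$ is central, $[a,x]\,R\,[a,x]=[a,x]^2R=0$, so $[a,x]=0$; as $x$ ranges over $R$ this yields $a\in Z(R)$.

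All of this is elementary manipulation, so I do not foresee a real obstacle: the only point requiring a small idea is feeding $xa$ into the hypothesis in order to manufacture the central element $[a,x]a$, after which the standard fact that a central square-zero element of a semiprime ring must vanish finishes the proof. One could instead route through the identity $d(x)[z,y]=[x,z]d(y)$, valid for any derivation $d$ with $d(R)\subseteq Z(R)$, applied to $d=[a,\cdot\,]$ and $z=a$; but the direct argument above is shorter and self-contained.
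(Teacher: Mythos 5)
Your proof is correct: feeding $xa$ into the hypothesis gives the central element $[a,xa]=[a,x]a$, whence $[a,x][a,y]=0$ for all $y$, so $[a,x]^2=0$ and a central square-zero element of a prime (hence semiprime) ring vanishes. The paper gives no proof of this lemma (it is cited as well-known), and your argument is precisely the standard one, so there is nothing to compare beyond confirming its correctness.
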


\begin{lemma}
\label{lem14} Let $R$ be a prime ring. If $a\in
RC\setminus C$, then $\dim_C[a, RC]>1$.
\end{lemma}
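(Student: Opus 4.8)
\textbf{Proof proposal for Lemma \ref{lem14}.}

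The plan is to argue by contradiction: suppose $a\in RC\setminus C$ but $\dim_C[a,RC]\le 1$. Since $a\notin C$, the commutator space $[a,RC]$ is not zero (by Lemma \ref{lem2} applied to the prime ring $RC$, whose extended centroid is still $C$, noting $C$ is a field because $R$ is prime), so in fact $\dim_C[a,RC]=1$. Thus there is a fixed nonzero $c_0\in RC$ with $[a,x]\in Cc_0$ for every $x\in RC$; write $[a,x]=\lambda(x)c_0$ where $\lambda\colon RC\to C$ is a $C$-linear map. First I would pin down $c_0$ up to scalars: taking $x=a^2$ we get $[a,a^2]=0$, and more usefully, replacing $x$ by $xy$ and expanding $[a,xy]=[a,x]y+x[a,y]$ gives $\lambda(xy)c_0=\lambda(x)c_0 y+\lambda(y)xc_0$, so $\lambda(x)c_0y+\lambda(y)xc_0\in Cc_0$ for all $x,y$. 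Choosing $y$ with $\lambda(y)=0$ but $c_0 y\notin Cc_0$ (possible unless $c_0$ right-annihilates a large set) would already force a contradiction; the cleaner route is to observe that $c_0$ and $[a,c_0]=\lambda(c_0)c_0$ are $C$-dependent, and to use the linearized identity to show $[a,RC]\subseteq Cc_0$ forces $c_0$ to be a "rank one" type element, which a prime ring of the form $RC$ cannot contain unless $RC$ is very small.

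The key technical step is the following: from $[a,x]\in Cc_0$ for all $x\in RC$ I would derive that $c_0 RC c_0\subseteq C c_0^{?}$ in a controlled way, or more directly apply Lemma \ref{lem3}. Indeed, take any $b\in RC$ with $b\notin C$ (such $b$ exists since $RC\ne C$; e.g. $b=a$). For each $x$, $[a,x]=\lambda(x)c_0\in Cc_0$, so in particular $\big[a,[b,x]\big]=\lambda([b,x])c_0\in Cc_0$. If we can show this central-valued-in-a-line condition propagates to $\big[a,[b,x]\big]=0$, then Lemma \ref{lem3} yields $a^2\in Z(RC)=C$, and then a second application (or Lemma \ref{lem2}) pushes $a$ itself into $C$, contradicting $a\in RC\setminus C$. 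To get $\big[a,[b,x]\big]=0$: the map $x\mapsto\lambda([a,x])$... rather, apply the Jacobi-type identity $\big[a,[a,x]\big]=[\lambda(x)c_0,a]$-style computation — since $[a,x]=\lambda(x)c_0$, we get $\big[a,[a,x]\big]=\lambda(x)[a,c_0]=\lambda(x)\lambda(c_0)c_0$; iterating, $\mathrm{ad}_a^k(x)=\lambda(x)\lambda(c_0)^{k-1}c_0$ for $k\ge 1$. If $\lambda(c_0)\ne 0$, replacing $a$ by $a-\mu$ for a suitable $\mu\in C$ does not change $[a,-]$, so we may instead exploit that $\mathrm{ad}_a$ acts on the one-dimensional space $Cc_0+Ca$ (if $c_0\notin Ca$, on $C c_0$) as multiplication by the scalar $\lambda(c_0)$, forcing $[a,RC]$ to be a $C$-subspace of $Cc_0$ that is $\mathrm{ad}_a$-stable with all of $RC/\ker\lambda$ mapping onto it; a standard eigenvalue/Engel argument in the prime ring $RC$ then forces $\lambda(c_0)=0$, i.e. $[a,c_0]=0$.

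Once $[a,c_0]=0$, the situation simplifies: $c_0$ lies in the centralizer of $a$ in $RC$, and $[a,x]\in Cc_0$ for all $x$. Now I would linearize once more: $[a,xc_0]=[a,x]c_0=\lambda(x)c_0^2\in Cc_0$, so $c_0^2\in Cc_0$ whenever $\lambda(x)\ne 0$ for some $x$ (which holds since $a\notin C$); write $c_0^2=\gamma c_0$ with $\gamma\in C$. If $\gamma\ne 0$ then $e:=\gamma^{-1}c_0$ is a nonzero idempotent in $RC$ commuting with $a$ and $[a,x]\in Ce$ for all $x$ — but then $e[a,x](1-e)=0$ and $(1-e)[a,x]e=0$ identically, forcing $e[a,(1-e)RC e](1-e)=0$ type relations that, via primeness of $RC$, collapse $e$ to $0$ or $1$, contradiction; if $\gamma=0$ then $c_0^2=0$ and $[a,x]\in Cc_0$ with $c_0RCc_0$ square-zero-ish leads similarly to $c_0 RC c_0=0$ by a Peirce/annihilator argument, contradicting primeness of $RC$ and $c_0\ne 0$. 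Either way we reach a contradiction, so $\dim_C[a,RC]>1$.

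The main obstacle is the middle step — showing $[a,c_0]=0$, or equivalently ruling out the "eigenvector" scenario where $\mathrm{ad}_a$ acts nontrivially on the line $Cc_0$. I would handle it by the Engel-type observation above: $\mathrm{ad}_a$ restricted to the $\mathrm{ad}_a$-invariant finite-dimensional (indeed one-dimensional) space $C c_0$ has a single eigenvalue $\lambda(c_0)\in C$, and since $\mathrm{ad}_a$ is a derivation of $RC$ whose image lies in this line, a short computation with $[a,[a,c_0]]$ and the fact that derivations of prime rings cannot be "locally scalar" with nonzero scalar (otherwise $a$ would be algebraic of degree $\le 2$ over $C$ with a forced relation placing it in $C$) forces $\lambda(c_0)=0$. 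If, in the write-up, it is cleaner to invoke Lemma \ref{lem3} directly, I would set $b=a$ there: $[a,[a,x]]=\lambda(x)\lambda(c_0)c_0$, and if this is $\equiv 0$ we are in Lemma \ref{lem3}'s hypothesis with $b=a\notin Z(RC)$, giving $a^2\in C$ and, after reducing mod the (at most quadratic) minimal equation, $a\in C$ — the contradiction. I expect the bulk of the technical care to go into this reduction and the final Peirce-decomposition argument killing the idempotent/square-zero $c_0$.
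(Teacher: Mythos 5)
Your proposal has a genuine gap at its central step, and it also misses the one-line trick that makes the paper's proof work. Writing $[a,RC]=Cw$ (your $c_0=w$), the paper observes that $\bigl[w,[a,RC]\bigr]=[w,Cw]=0$ holds \emph{automatically}, so Lemma \ref{lem3} can be applied with the inner element $a\notin C$ and the outer element $w$, giving $w^{2}\in C$ with no hypothesis on the characteristic; then $w[a,RC]=Cw^{2}\subseteq C$, and from $w[a,xa]=w[a,x]a\in C$ together with $w[a,x]\in C$ and $a\notin C$ one gets $w[a,x]=0$ for all $x$, whence primeness forces $w=0$ or $a\in C$, a contradiction. You instead try to apply Lemma \ref{lem3} with $b=a$ and the outer element also $a$, which requires $[a,[a,x]]\equiv 0$, i.e.\ $\lambda(c_0)=0$ (equivalently $[a,c_0]=0$). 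That is exactly the point you do not prove: the ``eigenvalue/Engel'' argument is only gestured at, and the alternative justification --- that a nonzero scalar would make $a$ ``algebraic of degree $\le 2$ over $C$ with a forced relation placing it in $C$'' --- is false ($a^{2}\in C$ does not imply $a\in C$; e.g.\ any square-zero noncentral element). The same false inference reappears in your fallback: even if you had $[a,[a,x]]\equiv 0$, Lemma \ref{lem3} yields $a\in C$ only when $\mathrm{char}\,R\neq 2$, and in characteristic $2$ it gives just $a^{2}\in C$, from which no contradiction follows by ``reducing mod the minimal equation.'' Since this lemma is used in the paper precisely to handle the exceptional characteristic-$2$ case of Theorem \ref{thm8}, the characteristic-$2$ loophole cannot be waved away.

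The later casework (assuming $[a,c_0]=0$, deducing $c_0^{2}=\gamma c_0$, then treating $\gamma\neq 0$ via an idempotent and $\gamma=0$ via annihilators) is repairable --- for instance, when $\gamma=0$ one can check that $a$ centralizes the nonzero ideal $RCc_0RC$, forcing $a\in C$ --- but as written it consists of assertions (``collapse $e$ to $0$ or $1$,'' ``leads similarly to $c_0RCc_0=0$'') rather than arguments, and it all rests on the unproved step $[a,c_0]=0$. So the proposal does not constitute a proof; the missing idea is to commute the spanning vector $w$ against $[a,RC]$ and apply Lemma \ref{lem3} to $w$ rather than to $a$.
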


\begin{proof}
Suppose not. We have $\dim_C[a, RC]=1$ and $[a, RC]=Cw$ for some $w\in [a,
RC]$. In particular, $\big[w, [a, RC]\big]=0$. In view of Lemma \ref{lem3},
we have $w^2\in C$. Thus $w[a, RC]=Cw^2\subseteq C$.

Let $x\in RC$. Then $w[a, xa]=w[a, x]a\in C$. Since $w[a, x]\in C$, we get $%
w[a, x]=0$. Thus $w[a, R]=0$. The primeness of $R$ forces $w=0$ and so $a\in C$. This is a
contradiction.
\end{proof}

\noindent{\bf Definition.}\ A noncommutative prime ring $R$ is called {\it exceptional} if both $\text{\rm char}\,R= 2$ and $\dim_CRC=4$. Otherwise, $R$ is called
{\it non-exceptional}.\vskip6pt

A Lie ideal $L$ of a ring $R$ is called {\it proper} if $[I,R]\subseteq L$
for some nonzero ideal $I$ of $R$.
We need the following lemma (see  \cite[Lemma 7]{lanski1972}).

\begin{lemma}\label{lem18}
Let $R$ be a prime ring with a Lie ideal $L$. Then $L$ is noncentral iff $[L, L]\ne 0$ unless $R$ is exceptional.
\end{lemma}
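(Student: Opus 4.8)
The plan is to establish the substantive direction: if $R$ is not exceptional, then every noncentral Lie ideal $L$ has $[L,L]\ne 0$; equivalently, if $L$ is noncentral and $[L,L]=0$, then $R$ is exceptional. (The reverse implication needs no hypothesis: if $[L,L]\ne 0$ then $L$ cannot be contained in the commutative subring $Z(R)$.) So assume $L$ is noncentral with $[L,L]=0$. Since some $a\in L$ lies outside $Z(R)$, the ring $R$ is noncommutative. First I would pin down the characteristic: for this $a$, every $[a,x]$ with $x\in R$ lies in $L$, so $[a,[a,x]]=0$ for all $x$; by Lemma \ref{lem3} with $b=a$ this gives $a^2\in Z(R)$, and when $\operatorname{char}R\ne 2$ it gives the contradiction $a\in Z(R)$. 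Hence $\operatorname{char}R=2$, and it remains to prove $\dim_CRC=4$.

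Next I would pass to the central closure $S:=RC$, a centrally closed prime $C$-algebra of characteristic $2$ with $Z(S)=C$; note $\dim_CRC=\dim_CS$. Put $M:=LC$. Using the centrality of $C$ one checks $[M,S]=[L,R]C\subseteq LC=M$, so $M$ is a Lie ideal of $S$, while $[M,M]=[L,L]C=0$ and $M\not\subseteq C$ (otherwise $L\subseteq C\cap R=Z(R)$). Fix $m\in M\setminus C$. For all $s,t\in S$ the elements $[m,s]$ and $[m,t]$ lie in $M$, so $S$ satisfies the generalized polynomial identity $\big[[m,s],[m,t]\big]=0$; since $m\notin C$, the value $[m,X]$ is not central-valued (if $[m,S]\subseteq C$ then $m\in C$), so this GPI is nontrivial. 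By Martindale's theorem on prime GPI-rings, $S$ is primitive with nonzero socle $H$, and for a minimal idempotent $e\in S$ the division ring $\Delta:=eSe$ is finite-dimensional over $C$, with $S$ acting densely on a left $\Delta$-space $V$ whose ideal of finite-rank transformations is $H$.

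Finally I would localize to the socle. As $C_S(H)=Z(S)=C$ for the prime ring $S$ and $M\not\subseteq C$, we get $[M,H]\ne 0$; since $[M,H]\subseteq M$ and $[M,H]\subseteq H$, the subgroup $N:=M\cap H$ is a nonzero Lie ideal of $S$ contained in $H$, with $[N,N]=0$. For $y\in N$ we have $[y,s]\in N$, hence $[y,[y,s]]=0$ for all $s$; in characteristic $2$ one has $[y,[y,s]]=[y^2,s]$, so $y^2\in Z(S)=C$. Thus $N$ is a commutative Lie ideal of $S$ all of whose elements square into the scalars, and by Lemma \ref{lem14} any $y\in N\setminus C$ satisfies $\dim_C[y,S]>1$ with $[y,S]\subseteq N$. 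The remaining step is a dimension count inside the dense ring of operators on $V$: choosing a nonzero element of $N$ of minimal rank and using the commutativity of $N$, the condition $y^2\in C$, and the bound $\dim_C[y,S]\ge 2$, one forces $\Delta=C$ and $\dim_\Delta V=2$, so that $S=H=\mathbb{M}_2(C)$ and $\dim_CRC=4$; that is, $R$ is exceptional. The main obstacle is precisely this last count: the GPI machinery only yields ``finite-dimensional over $C$'', and reducing the inequalities to the exact value $4$ (excluding $\Delta\ne C$ and $\dim_\Delta V\ge 3$) is where the real work lies; this is the core of the argument in \cite[Lemma 7]{lanski1972}, and it can alternatively be carried out by the classical Herstein--Lanski technique of manipulating the identities $[y,[y,s]]=0$ and $y[y,s]=[y,s]y$ directly, at the price of heavier computation.
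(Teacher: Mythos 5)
The paper offers no proof of this lemma at all: it is quoted verbatim from Lanski--Montgomery, \cite[Lemma 7]{lanski1972}, so the only thing to measure your attempt against is whether it genuinely re-proves that result. Your preliminary reductions are correct and well organized: the direction $[L,L]\neq 0\Rightarrow L\not\subseteq Z(R)$ is indeed trivial; taking a noncentral $a\in L$ and noting $[a,[a,x]]\in[L,L]=0$ lets you invoke Lemma \ref{lem3} with $b=a$ to get $a^2\in Z(R)$ and to force $\mathrm{char}\,R=2$; the passage to $S=RC$, the fact that $N=M\cap H$ is a nonzero commutative Lie ideal inside the socle, and the identity $[y,[y,s]]=[y^2,s]$ in characteristic $2$ (hence $y^2\in C$ for $y\in N$) are all fine.

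The genuine gap is that the entire content of the lemma sits in the step you do not carry out. You assert that a minimal-rank element of $N$, together with $y^2\in C$ and $\dim_C[y,S]\ge 2$ from Lemma \ref{lem14}, ``forces $\Delta=C$ and $\dim_\Delta V=2$,'' but no argument is given, and you then explicitly defer this count to \cite[Lemma 7]{lanski1972} --- that is, to the very statement being proved and to the paper's own source. As written, the proposal is a reduction of the lemma to itself, not a proof; to stand on its own it would need the rank/dimension computation in the dense ring of operators (or the Herstein--Lanski manipulation of the identities $[y,[y,s]]=0$, $y[y,s]=[y,s]y$ you allude to) written out in full, since nothing in the GPI machinery by itself excludes $\Delta\neq C$ or $\dim_\Delta V\ge 3$. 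A secondary, fixable point: the nontriviality of the generalized identity $\big[[m,X],[m,Y]\big]=0$ should be verified formally in the free product over $C$, using the $C$-independence of $1$ and $m$ (e.g., the monomial $mXmY$ cannot cancel); arguing that $[m,S]\not\subseteq C$ concerns the values of $[m,X]$, not the formal triviality of the identity, so as stated this step is also incomplete.
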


Clearly, if $L$ is a nonzero central Lie ideal of $R$, then $R$ is $L$-prime if and only if $R$ is a domain.

\begin{theorem}
\label{thm8} Let $R$ be a prime ring, and let $L$
be a noncentral Lie ideal of $R$. If $R$ is not a domain, then $R$ is $L$%
-prime if and only if one of the following holds:

(i)\ $L$ is a proper Lie ideal of $R$;

(ii)\ $R$ is exceptional, $[L, L]=0$, $\dim_CLC=2$
and $LC=[a, RC]$, where $a\in L$ such that $a+\beta$ is invertible in $RC$
for all $\beta\in C$.
\end{theorem}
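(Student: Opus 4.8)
The plan is to split into the two directions. For the ``if'' direction, case (i) is essentially already handled: if $L$ is proper, then $[I,R]\subseteq L$ for a nonzero ideal $I$, and since $R$ is noncommutative prime, Lemma \ref{lem13} gives $\big[[I,R],[I,R]\big]\ne 0$, hence $[L,L]\supseteq\big[[I,R],[I,R]\big]\ne 0$, so Theorem \ref{thm3}(ii) applies and $R$ is $L$-prime. Case (ii) must be argued by hand, working inside $RC$ (which equals $\mathbb{M}_2(C)$ since $\dim_CRC=4$): if $a\in L$ with $a+\beta$ invertible for all $\beta\in C$, the characteristic polynomial of $a$ over $C$ is irreducible, so $a$ generates a quadratic field extension and $C[a]$ is a maximal subfield of $\mathbb{M}_2(C)$. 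One then checks that if $u(LC)v=0$ with $u,v\in RC$, then using $LC=[a,RC]=Ca+C1\cdot w$ type description, $LC$ is a two-dimensional subspace containing the invertible element $a$; the key point is that $u\,x\,v=0$ for all $x$ in a subspace containing a unit forces, after multiplying by that unit's inverse, enough relations to conclude $u=0$ or $v=0$. This will reduce to a short linear-algebra computation in $\mathbb{M}_2(C)$.

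For the ``only if'' direction, assume $R$ is $L$-prime (and not a domain, $L$ noncentral), and suppose (i) fails, i.e.\ $L$ is not proper; I must derive that $R$ is exceptional and that the precise description in (ii) holds. First I would show $[L,L]=0$: if $[L,L]\ne 0$, then $\mathcal{I}([L,L])$ is a nonzero ideal and by Lemma \ref{lem4}(ii) $[\mathcal{I}([L,L]),R]\subseteq[L,R]\subseteq L$, making $L$ proper, a contradiction. Hence $[L,L]=0$. Now invoke Lemma \ref{lem18}: since $L$ is noncentral but $[L,L]=0$, the ring $R$ must be exceptional, so $\text{char}\,R=2$ and $\dim_CRC=4$, i.e.\ $RC\cong\mathbb{M}_2(C)$ with $C$ a field. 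Next, the structure of noncentral Lie ideals $L$ with $[L,L]=0$ in the exceptional case is classical (Lanski--Montgomery / Herstein): such an $L$ satisfies $LC$ is a two-dimensional $C$-subspace of $\mathbb{M}_2(C)$, and in fact $LC=[a,RC]$ for a suitable $a$; I would cite this and then pin down which $a$'s can occur. Since $L$ is noncentral, pick $a\in L\setminus Z(R)=L\setminus C$; by Lemma \ref{lem14}, $\dim_C[a,RC]>1$, and since $[a,RC]\subseteq LC$ which is $2$-dimensional, we get $LC=[a,RC]$ and $\dim_CLC=2$.

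It remains to show that every $a\in L$ (equivalently, every $a\in L\setminus C$, and then all of $L$) satisfies: $a+\beta$ is invertible in $RC$ for all $\beta\in C$. Here is where $L$-primeness is used quantitatively. Suppose some $a+\beta$ is not invertible in $\mathbb{M}_2(C)$; replacing $a$ by $a+\beta$ (still in $L$ and still noncentral, since $\text{char}\,R=2$ forces $a\notin C \Rightarrow a+\beta\notin C$) we may assume $a\in L$ is a non-unit, noncentral, with $a^2\in C$ (from $[L,L]=0$ and Lemma \ref{lem3} applied to $[a,[a,x]]=0$, or directly: a noncentral square-zero-plus-central element of $\mathbb{M}_2(C)$; in fact $a^2=0$ after adjusting by a scalar since $a$ is a non-unit with $a^2\in C1$). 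A rank-one nilpotent $a\in\mathbb{M}_2(C)$ then has $LC=[a,\mathbb{M}_2(C)]=$ the two-dimensional space $\{x\in\mathbb{M}_2(C): \text{(appropriate support conditions)}\}$; concretely, conjugating $a$ to $e_{12}$, one computes $[e_{12},\mathbb{M}_2(C)]=Ce_{11}+Ce_{12}-Ce_{22}=C(e_{11}+e_{22})^{\perp}\cap\ldots$ — the span of $e_{12}$ and $e_{11}-e_{22}=e_{11}+e_{22}$ (char $2$!), so $LC=Ce_{12}+C(e_{11}+e_{22})$, which contains no invertible element other than scalars and whose elements all annihilate on one side a common nonzero vector. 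Then taking $u=v=e_{21}$ one finds $u(LC)u=0$ with $u\ne0$, contradicting $L$-primeness of $R$ (note $R$ is $LC$-prime iff it is $L$-prime). Hence no such $a$ exists, giving the invertibility condition. The main obstacle I anticipate is this last step: correctly identifying the two-dimensional subspace $LC$ in the exceptional characteristic-$2$ case and verifying that the non-invertibility of some $a+\beta$ really does produce elements $u,v$ witnessing the failure of $L$-primeness — the characteristic-$2$ coincidence $e_{11}-e_{22}=e_{11}+e_{22}$ is what makes $LC$ degenerate, and this has to be handled carefully rather than by analogy with odd characteristic.
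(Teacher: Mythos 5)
Your skeleton for the ``only if'' direction is essentially the paper's ($[L,L]=0$ because otherwise $\mathcal{I}([L,L])\ne 0$ makes $L$ proper; Lemma \ref{lem18} gives exceptionality; $a\in L\setminus Z(R)$ with Lemma \ref{lem14} plus a dimension count gives $LC=[a,RC]$, $\dim_CLC=2$), but both places where you depart from the paper contain real errors. In the invertibility step: $a+\beta$ need \emph{not} lie in $L$ ($\beta\in C$ need not even lie in $R$; what is true is only $[a+\beta,RC]=[a,RC]=LC$), and, more seriously, your witness is wrong. Writing $a+\gamma$ as the nilpotent $e_{12}$ after conjugation, your own (correct) description gives $LC=[e_{12},\mathbb{M}_2(C)]=C\,1+Ce_{12}$ in characteristic $2$; but then $e_{21}\,e_{12}\,e_{21}=e_{22}e_{21}=e_{21}\ne 0$, so $u=v=e_{21}$ does \emph{not} satisfy $u(LC)u=0$, and the claim that all elements of $LC$ annihilate a common vector fails because $1\in LC$. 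The witness that works is the nilpotent itself, $u=v=e_{12}$, since $e_{12}(C\,1+Ce_{12})e_{12}=0$ (this is exactly Remark \ref{remark10}(i)); and one must then pass from a witness in $RC$ to one in $R$ by clearing denominators, using that $C$ is the quotient field of $Z(R)$ for an exceptional prime ring --- a point your parenthetical ``$R$ is $LC$-prime iff $L$-prime'' glosses over. The paper avoids all of this by taking nonzero $b,c$ with $b(a+\beta)=0=(a+\beta)c$ and observing $b[a,x]c=b(a+\beta)xc-bx(a+\beta)c=0$.

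In the ``if'' direction for case (ii) the computation is never carried out, and the principle you state --- $uxv=0$ for all $x$ in a subspace containing a unit forces $u=0$ or $v=0$ --- is false: $C\,1+Ce_{12}$ contains the unit $1$, yet $e_{12}(C\,1+Ce_{12})e_{11}=0$ with both factors nonzero. What actually saves (ii) is stronger: $[L,L]=0$ and characteristic $2$ give $a^2\in C$, and the hypothesis that $a+\beta$ is invertible for every $\beta\in C$ says $t^2-a^2$ is irreducible, so $LC=[a,RC]=C+Ca=C[a]$ is a field inside $\mathbb{M}_2(C)$; then $bLCc=0$ yields $bc=0$ and $bac=0$, and for a nonzero column $v$ of $c$ the vectors $v,av$ are $C$-independent (no eigenvalue in $C$), forcing $b=0$. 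That fixed argument is legitimate and genuinely different from the paper, which instead reads $b[a,x]c=0$ as $baxc=bxac$ and applies Martindale's theorem \cite{martindale} together with invertibility of $a-\beta$. Finally, your appeal to Lanski--Montgomery for $\dim_CLC=2$ is unnecessary: the paper excludes $\dim_CLC=3$ directly via $[RC,RC]\subseteq LC$ and Lemma \ref{lem13}. So the route is salvageable, but as written the two key computational claims are incorrect.
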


\begin{proof}
Clearly, $R$ is not commutative. By Lemma \ref{lem4} (ii), $[%
\mathcal{I}([L, L]), R]\subseteq L$.

``$\Longrightarrow$'':\ Suppose that $R$ is $L$-prime. If $\mathcal{I}%
([L,L])\neq 0$, then $L$ is a proper Lie ideal of $R$ and (i) holds. Assume
next that $\mathcal{I}([L,L])=0$, that is, $[L,L]=0$. Since $L$ is a
noncentral Lie ideal of $R$, it follows from Lemma \ref{lem18} that
$R$ is exceptional. Also, $R$ is not a domain
and so $RC\cong \mathbb{M}_{2}(C)$. Clearly, $LC$ is a commutative Lie ideal
of $RC$.

Since $LC$ is noncentral, $0\ne [LC, RC]\subseteq LC$ and, by Lemma \ref{lem2}, $[LC, RC]\nsubseteq C$.
Choose a nonzero
element $a\in [LC, RC]\setminus C$. Then $0\ne [a, RC]\subseteq LC$. It is well-known
that $Z(R)\ne 0$ and $C$ is the quotient
field of $Z(R)$. Hence we may choose $a\in L$. It follows from Lemma \ref%
{lem14} that $\dim_C [a, RC]>1$.

Suppose that $\dim_CLC=3$. Then $RC=LC+Cz$ for some $z\in RC$. Thus
\begin{equation*}
[RC, RC]=[LC+Cz, LC+Cz]=[LC, Cz]\subseteq LC.
\end{equation*}
In particular, $\big[\lbrack RC, RC], [RC, RC]\big]=0$, a contradiction (see
Lemma \ref{lem13}). Hence $LC=[a, RC]$ and $\dim_C LC=2$.

Suppose on the contrary that $a+\beta$ is not invertible in $RC$ for some $%
\beta\in C$. Then we can choose nonzero $b, c\in RC$ such that $b(a+\beta)=0$
and $(a+\beta)c=0$. We may choose $b, c\in R$. Given $x\in RC$, we have
\begin{equation*}
b[a, x]c=b[a+\beta, x]c=b(a+\beta)xc - bx(a+\beta)c=0.
\end{equation*}
Hence $b[a, RC]c=0$. That is, $bLCc=0$ and so $bLc=0$. This is a
contradiction as $R$ is $L$-prime. So (ii) holds.

``$\Longleftarrow$'':\ Suppose that (i) holds. Then $[I,R]\subseteq L$ for some
nonzero ideal $I$ of $R$. In view of Theorem \ref{thm11}, $R$ is $[I,R]$%
-prime and so it is $L$-prime. We next consider the case (ii). Let $b,c\in R$
be such that $bLc=0$. Then $bLCc=0$ and so $b[a,x]c=0$ for all $x\in RC$.
That is,
\begin{equation*}
baxc-bxac=0
\end{equation*}%
for all $x\in RC$. Suppose first that $ba$ and $b$ are $C$-independent. In
view of \cite[Theorem 2]{martindale}, $c=0$ follows, as desired. Suppose next that $%
ba$ and $b$ are $C$-dependent. That is, there exists $\beta \in C$ such that
$ba=\beta b$. So $b(a-\beta )=0$. Since, by assumption, $a-\beta $ is
invertible in $RC$ and we get $b=0$. Hence $R$ is $L$-prime.
\end{proof}

\begin{remark}
\normalfont\label{remark10} The case (ii) of Theorem \ref{thm8} indeed
occurs. Let $R:=\mathbb{M}_{2}(F)$, where $F$ is a field of characteristic $%
2 $.

(i)\ Let
\begin{equation*}
L=\{\left[
\begin{array}{cc}
\alpha & \beta \\
\beta & \alpha%
\end{array}%
\right] \mid \alpha ,\beta \in F\}=[a,R],
\end{equation*}%
where $a:=\left[
\begin{array}{cc}
1 & 1 \\
1 & 1%
\end{array}%
\right] $. Then $L$ is a noncentral Lie ideal of $R$ satisfying $[L,L]=0$.
Since $a\in L$ and $a^{2}=0$, we get $aLa=0$ but $a\neq 0$. Thus, $R$ is not
$L$-prime.

(ii)\ We choose $F$ such that there exists $\eta\in F$ satisfying $%
\eta\notin F^{(2)}:=\{\mu^2\mid \mu\in F\}$. Let
\begin{equation*}
L=\{\left[
\begin{array}{cc}
\alpha & \beta \\
\beta\eta & \alpha%
\end{array}%
\right]\mid \alpha ,\beta \in F\}=[a, R],
\end{equation*}
where $a:=\left[
\begin{array}{cc}
1 & 1 \\
\eta & 1%
\end{array}%
\right]$. Then $L$ is a noncentral Lie ideal of $R$ satisfying $[L,L]=0$.
Note that $a+\beta$ is invertible for all $\beta\in F$. Hence $R$ is $L$%
-prime.
\end{remark}

We follow the notation given in \cite{C}. A subset $X$ of a ring $R$ is said
to be invariant under {\it special} automorphisms if $(1+t)X(1+t)^{-1}%
\subseteq X$ for all $t\in R$ such that $t^{2}=0$. Clearly, if $X\subseteq R$
is invariant under special automorphisms, then so is $X^{+}$. Also, $R$ is $%
X $-prime if and only if it is $X^{+}$-prime.

\begin{theorem}
\label{thm10} Let $R$ be a prime ring with a nontrivial idempotent and let $%
X $ be a subset of $R$ invariant under special automorphisms. If $%
X\not\subseteq Z(R)$, then $R$ is $X$-prime.
\end{theorem}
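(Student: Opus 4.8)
The plan is to exploit the structure theory for additive subgroups of prime rings invariant under special automorphisms, which is exactly Chuang's theorem from \cite{C}. Since $R$ has a nontrivial idempotent, $R$ is not a domain, and in particular $R$ contains nonzero square-zero elements, so there is an ample supply of special automorphisms $1+t$ with $t^2=0$. The key point is that Chuang's theorem tells us that a noncentral additive subgroup $X$ invariant under all special automorphisms must contain $[I,R]$ for some nonzero ideal $I$ of $R$ (possibly up to a central correction term that can be absorbed), i.e.\ $X$ contains a proper Lie ideal in the sense defined before Lemma \ref{lem18}; the exceptional low-dimensional exceptions that appear in Theorem \ref{thm8}(ii) do not survive invariance under \emph{all} special automorphisms together with the hypothesis $X \not\subseteq Z(R)$.

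First I would reduce to the case where $X = X^{+}$, which is harmless since $X^{+}$ is still invariant under special automorphisms, still noncentral, and $R$ is $X$-prime iff it is $X^{+}$-prime (as noted right before the statement). Second, I would invoke Chuang's classification \cite{C} of noncentral additive subgroups of a prime ring invariant under special automorphisms: such an $X$ must contain $[I,R]$ for a nonzero ideal $I$ of $R$. Third, with $L_0 := [I,R] \subseteq X$, Theorem \ref{thm11} (a consequence of Theorem \ref{thm3}(ii) and Lemma \ref{lem13}) gives that $R$ is $[I,R]$-prime, i.e.\ $L_0$-prime. Finally, since $L_0 \subseteq X$, from $aXb=0$ we get $aL_0 b = 0$, and $L_0$-primeness forces $a=0$ or $b=0$. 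Hence $R$ is $X$-prime.

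The main obstacle is the precise invocation of Chuang's theorem and verifying that its hypotheses are met in our setting. Chuang's result is stated for additive subgroups, and one must check that "invariant under special automorphisms" in \cite{C} matches the definition $(1+t)X(1+t)^{-1} \subseteq X$ for all $t$ with $t^2=0$ used here, and that the noncentrality hypothesis $X \not\subseteq Z(R)$ is the right nondegeneracy condition. One also needs that $R$ being noncommutative (which follows from having a nontrivial idempotent, since a commutative ring with a nontrivial idempotent is not prime) puts us outside the degenerate cases of Chuang's dichotomy. A secondary subtlety is whether Chuang's conclusion yields $[I,R]\subseteq X$ on the nose or only modulo the center; but even in the latter case, since $[I,R]$ itself is noncentral (by Lemma \ref{lem13}, $[[I,R],[I,R]]\ne 0$, so $[I,R]\not\subseteq Z(R)$), the central part is irrelevant to the annihilator computation: if $aXb=0$ then $a[I,R]b=0$ after subtracting the central term, because for $z\in Z(R)$ and $w\in [I,R]$ with $w+z\in X$ we have $a(w+z)b = awb + zab$, and running over $w$ we can isolate $a[I,R]b$ up to the single element $zab$, which we then handle by a standard argument (replace $b$ by $bxb$ etc., using primeness) to conclude $a[I,R]b=0$ outright. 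Then Theorem \ref{thm11} finishes the proof.
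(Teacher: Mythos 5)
Your overall strategy (pass to $X^{+}$, invoke Chuang's theorem to find a proper Lie ideal inside $X$, then finish with Theorem \ref{thm11}, equivalently Theorem \ref{thm8}(i)) is the same as the paper's in the generic case, but there is a genuine gap: you dismiss the exceptional case with the unsupported claim that ``the exceptional low-dimensional exceptions do not survive invariance under all special automorphisms together with $X\not\subseteq Z(R)$.'' This is exactly the point at which Chuang's Theorem 1 in \cite{C} does \emph{not} apply: his result yielding a proper Lie ideal inside a noncentral invariant additive subgroup excludes the case $\mathrm{char}\,R=2$ with $\dim_C RC=4$, and such rings (e.g.\ $\mathbb{M}_2(F)$ with $\mathrm{char}\,F=2$) do satisfy all hypotheses of the theorem, since they have nontrivial idempotents. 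In that exceptional case the paper uses Chuang's Lemma 11 instead, which only gives that $XZ(R)$ (not $X$ itself) contains a proper Lie ideal, and then concludes $X$-primeness from $XZ(R)$-primeness via the trivial observation that $aXb=0$ forces $aXZ(R)b=Z(R)\,aXb=0$ because the scalars are central. Without some argument of this kind your proof simply does not cover exceptional prime rings, so the case split is not optional.

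A secondary remark: your patch for a possible ``central correction'' in Chuang's conclusion is both unnecessary in the non-exceptional case (there the theorem gives $[I,R]\subseteq X$ on the nose) and not correctly argued where a correction genuinely occurs. Your suggestion to ``replace $b$ by $bxb$'' is unjustified, since $aXb=0$ gives no control over $aX(bxb)$. The correct and much simpler mechanism in the exceptional case is the one above: the correction is by central multiples, and central factors pass through the products $aXb$, so $XZ(R)$-primeness (obtained from Lemma 11 of \cite{C} together with Theorem \ref{thm8}(i)) immediately yields $X$-primeness.
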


\begin{proof}
Without loss of generality we can replace $X$ by $X^+$ and assume that $X$
is an additive subgroup of $R$. Clearly, $R$ is not commutative.

Case 1:\ $R$ is non-exceptional. Then, as $%
X\nsubseteq Z(R)$, we can apply \cite[Theorem 1]{C} to get that $X$ contains
a proper Lie ideal $L$ of $R$. In view of Theorem \ref{thm8} (i), $R$ is $L$%
-prime and hence is $X$-prime.

Case 2:\ $R$ is exceptional. It follows from
\cite[Lemma 11]{C} that $XZ(R)$ contains a proper Lie ideal of $R$. Thus,
using Theorem \ref{thm8} (i) again, we obtain that $R$ is $XZ(R)$-prime and
hence is $X$-prime.
\end{proof}

The following are natural examples of $X$:\ potent elements, potent elements
of a fixed degree, nilpotent elements, nilpotent elements of a fixed degree
(in particular, elements of square zero), $[E(R),R]$, $U(R)$, $E(R)$ etc.
Moreover, if $A,B$ are invariant under special automorphisms, then so are $%
AB $ and $[A,B]$.

Let $N(R)$ denote the set of all nilpotent elements of $R$.
We end this section with the following corollary, which is a consequence of Theorem \ref{thm10}.

\begin{corollary}\label{cor7}
Let $R$ be a ring possessing a nontrivial idempotents. The following are equivalent:

(i)\ $R$ is a prime ring;

(ii)\ $R$ is $U(R)$-prime;

(iii)\ $R$ is $N(R)$-prime.
\end{corollary}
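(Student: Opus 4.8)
The plan is to establish the cycle of implications $(i)\Rightarrow(ii)\Rightarrow(i)$ and $(i)\Rightarrow(iii)\Rightarrow(i)$, using Theorem \ref{thm10} for the forward directions and elementary ring-theoretic arguments for the converses. For $(i)\Rightarrow(ii)$ and $(i)\Rightarrow(iii)$: since $R$ is a prime ring with a nontrivial idempotent, it is noncommutative, so neither $U(R)$ nor $N(R)$ is contained in $Z(R)$ (for instance, a nontrivial idempotent $e$ gives $ex(1-e)\in N(R)\setminus Z(R)$ for suitable $x$, and then $1+ex(1-e)\in U(R)\setminus Z(R)$). Both $U(R)$ and $N(R)$ are invariant under special automorphisms, as noted in the remark following Theorem \ref{thm10}. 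Hence Theorem \ref{thm10} applies directly and yields that $R$ is $U(R)$-prime and $N(R)$-prime.

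For the converse direction $(ii)\Rightarrow(i)$: suppose $R$ is $U(R)$-prime and $aRb=0$ with $a,b\in R$. The strategy is to show $aub=0$ for every unit $u$, so that $a=0$ or $b=0$ follows from $U(R)$-primeness. Given a unit $u$, we have $aub = a(u\cdot 1)b$, but $aub$ need not obviously vanish from $aRb=0$ alone. Instead I would argue: $aRb=0$ gives $bRaRbRa=0$, hence $(bRa)(bRa)=0$ after noting $bRa\subseteq$ an ideal; more directly, $aRb=0$ implies $aRbRa=0$ so by primeness $bRa=0$ or... this needs care. A cleaner route: from $aRb=0$ we get $a\cdot R\cdot b=0$; since $1\in R$ we have in particular... no. The correct elementary fact is that in \emph{any} ring, if $aRb=0$ then for a unit $u$, $aub$ lies in $aRb=0$ only when $u\in R$, which it is, so $aub\in aRb=0$. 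Indeed $u\in R$, hence $aub\in aRb=\{0\}$! So $aU(R)b=0$ trivially, and $U(R)$-primeness gives $a=0$ or $b=0$; thus $R$ is prime. The same triviality handles $(iii)\Rightarrow(i)$: $aRb=0$ forces $aN(R)b=0$ since $N(R)\subseteq R$, and $N(R)$-primeness finishes it.

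So the proof is genuinely short. One subtlety I want to double-check is that $N(R)$ is nonempty and in fact generates enough: for the implication $(i)\Rightarrow(iii)$ we need $N(R)\not\subseteq Z(R)$, which is where the nontrivial idempotent is essential — in a commutative prime ring (a domain) the only nilpotent is $0$, so without the idempotent hypothesis $(iii)$ could fail to be equivalent to $(i)$ (a field is prime but $N(R)=\{0\}$ and $0\cdot x\cdot 0=0$ for $x=$ anything, so it is \emph{not} $N(R)$-prime — consistent, since a field has no nontrivial idempotent). With $e$ a nontrivial idempotent, pick $x\in R$ with $ex(1-e)\neq 0$ (possible by primeness, since $eR(1-e)\neq 0$ as $eR(1-e)=0$ would force $e=0$ or $1-e=0$); then $n:=ex(1-e)$ satisfies $n^2=0$, $n\neq 0$, and $n\notin Z(R)$ since $en=n$ while $ne=0$. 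This confirms the hypotheses of Theorem \ref{thm10} are met.

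The main obstacle is essentially cosmetic rather than mathematical: making sure the forward implications correctly invoke Theorem \ref{thm10} (which requires the ambient ring to already be prime with a nontrivial idempotent, exactly our standing hypothesis in $(i)$), and making sure the trivial-looking converses are stated without circularity — the point being that $X$-primeness for \emph{any} subset $X\subseteq R$ immediately implies primeness, because $aRb=0$ entails $aXb=0$. Thus no real computation is needed; the content is entirely the forward direction supplied by Theorem \ref{thm10}, and the corollary is just repackaging it together with the observation that $U(R)$ and $N(R)$ are special-automorphism invariant and noncentral in the presence of a nontrivial idempotent.
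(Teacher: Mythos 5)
Your proposal is correct and follows exactly the route the paper intends: the forward implications are Theorem \ref{thm10} applied to $U(R)$ and $N(R)$ (both invariant under special automorphisms and, thanks to the nontrivial idempotent and primeness, not contained in $Z(R)$, via a nonzero $ex(1-e)$), while the converses are immediate since $X$-primeness for any $X\subseteq R$ trivially implies primeness. The brief detour in your $(ii)\Rightarrow(i)$ argument resolves itself into this same trivial observation, so nothing is missing.
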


\section{Derivations}
By a {\it derivation} of a ring $R$ we mean an additive map $d\colon R\to R$
satisfying
$$
d(xy)=d(x)y+xd(y)
$$
for all $x, y\in R$. A derivation $d$ of $R$ is called {\it inner} if there exists $b\in R$ such that
$d(x)=[b, x]$ for all $x\in R$. In this case, we denote $d=\text{\rm ad}_b$.
Otherwise, $d$ is called {\it outer}. In this section we characterize the $d(R)$-semiprimeness of a given prime ring $R$. Moreover, we also study the
$d(L)$-semiprimeness of $R$ for $L$ a Lie ideal of $R$.

Let $R$ be a prime ring with a derivation $d$. It is known that $d$ can be uniquely extended to a derivation, denoted by $d$ also, of $Q(R)$ (by applying a standard argument).
We say that $d$ is {\it $\mathfrak{X}$-inner} if  $d$ is inner on $Q(R)$ and $d$ is called {\it $\mathfrak{X}$-outer}, otherwise.

We need a preliminary proposition due to Kharchenko (see \cite[Lemma 2]{kharchenko1978}).\vskip 4pt

\begin{proposition}\label{pro3}
Let $R$ be a prime ring with a derivation $\delta$.
Suppose that there exist finitely many $a_i, b_i, c_j,
d_j\in Q(R)$ such that
$$
\sum_ia_i\delta(x)b_i+\sum_jc_jxd_j=0
$$
for all $x\in R$. If $\delta$ is $\mathfrak{X}$-outer, then
$\sum_ia_ixb_i=0=\sum_jc_jxd_j$ for all $x\in R$.
\end{proposition}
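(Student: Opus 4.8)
The plan is to recognize this as Kharchenko's lemma on differential identities of prime rings and to prove it by reducing a differential identity to an ordinary generalized polynomial identity. Write $Q=Q(R)$ and let $C$ be the extended centroid; recall that $\delta$ extends uniquely to $Q$, and that $\mathfrak{X}$-outer means $\delta\neq\text{\rm ad}_b$ on $Q$ for every $b\in Q$. First I would normalize the hypothesis: since $C$ is central in $Q$, one can absorb $C$-scalars and rewrite $\sum_i a_i\delta(x)b_i$ so that $b_1,\dots,b_n$ are $C$-independent, and likewise arrange the $d_j$ to be $C$-independent. If $n=0$ there is nothing to prove, so I assume $n\geq1$ and proceed by induction on $n$, the number of $\delta$-terms.

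The heart of the argument is to show that, because $\delta$ is $\mathfrak{X}$-outer, the given differential identity forces the \emph{ordinary} generalized polynomial identity $\sum_i a_i yb_i+\sum_j c_j xd_j=0$ for all $x,y\in R$; that is, one may treat $\delta(x)$ as an indeterminate independent of $x$. To produce this, I would substitute $x\mapsto rx$ and $x\mapsto xr$ into the hypothesis for $r\in R$ and expand via the Leibniz rule $\delta(rx)=\delta(r)x+r\delta(x)$; combining the resulting relations with left/right multiples of the original identity yields a new differential identity whose $\delta(x)$-part has strictly fewer $C$-independent coefficients, to which the inductive hypothesis applies — unless the substitutions instead force $\delta$ to agree with an inner derivation $\text{\rm ad}_b$ on a nonzero ideal of $R$, which is exactly what $\mathfrak{X}$-outerness forbids. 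Here Martindale's theorem on generalized polynomial identities over $Q$ is the tool that lets one pass from the statement ``$\sum_i a_i\delta(x)b_i$ coincides on a nonzero ideal with a $\delta$-free generalized polynomial'' to a contradiction with the $C$-independence of $b_1,\dots,b_n$. The upshot is $\sum_i a_i\delta(x)b_i\equiv0$, and then the hypothesis gives $\sum_j c_j xd_j\equiv0$; undoing the normalization recovers the asserted GPI.

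Once the GPI $\sum_i a_i yb_i+\sum_j c_j xd_j=0$ holds for all $x,y\in R$, the conclusion is immediate: setting $y=0$ gives $\sum_j c_j xd_j=0$ for all $x\in R$, and then the GPI collapses to $\sum_i a_i xb_i=0$ for all $x\in R$, as required. The hard part will be the middle step, the reduction theorem itself: it is precisely there that $\mathfrak{X}$-outerness enters, and it rests on the orthogonal-completion and generalized-polynomial-identity machinery for prime rings, with the delicate point being to keep careful track of which of the derived relations remain genuine differential identities of $R$. Since all of this is classical, it is reasonable simply to cite \cite[Lemma~2]{kharchenko1978} in the paper rather than reproduce the full argument.
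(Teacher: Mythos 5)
Your final recommendation is exactly what the paper does: Proposition \ref{pro3} is stated there without proof, as a known result cited to Kharchenko (\cite[Lemma 2]{kharchenko1978}). Your sketch of the reduction of the differential identity to a generalized polynomial identity is a reasonable outline of the classical argument (and you rightly flag that the middle step is the genuinely hard, technical part), so citing Kharchenko as you propose is precisely the paper's approach.
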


 Given $b\in Q(R)$, let $\ell_R(b):=\{x\in R\mid xb=0\}$, the left annihilator of $b$ in $R$.
 Similarly, we denote by $r_R(b)$ the right annihilator of $b$ in $R$.

 We are now ready to prove the first main theorem in this section.

\begin{theorem}\label{thm21}
Let $R$ be a noncommutative prime ring with a derivation $d$. Then $R$ is $d(R)$-semiprime iff one of the following conditions holds

(i)\ $d$ is $\mathfrak{X}$-outer;

(ii)\ $d=\text{\rm ad}_b$ for some $b\in Q(R)$, and for any $\beta\in C$, either $\ell_R(b+\beta)=0$ or $r_R(b+\beta)=0$.
\end{theorem}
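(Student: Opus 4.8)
The plan is to analyze the identity $aua=0$ for all $u\in d(R)$ by splitting into the $\mathfrak{X}$-outer and $\mathfrak{X}$-inner cases, using Kharchenko's Proposition \ref{pro3} in the former and a direct annihilator argument in the latter.

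First I would prove that (i) implies $d(R)$-semiprimeness. Suppose $d$ is $\mathfrak{X}$-outer and $ad(x)a=0$ for all $x\in R$. This is an identity of the form covered by Proposition \ref{pro3} with $\delta=d$, $a_1=b_1=a$, and no $c_j,d_j$ terms; hence $axa=0$ for all $x\in R$, so $aRa=0$, and the primeness (in particular semiprimeness) of $R$ forces $a=0$. Next, for (ii), suppose $d=\text{\rm ad}_b$ with the stated annihilator condition, and let $a\in R$ satisfy $a[b,x]a=0$ for all $x\in R$. Expanding, $abxa - axba=0$, i.e. $(ab)x(a) = (a)x(ba)$ for all $x\in R$. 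The idea is to invoke a Martindale-type result (as in \cite[Theorem 2]{martindale}, already cited in the proof of Theorem \ref{thm8}): either $ab$ and $a$ are $C$-dependent, or else one can derive $a=0$ directly. If $ab$ and $a$ are $C$-independent, the functional identity $abxa=axba$ forces $a=0$. If instead $ab=\beta a$ for some $\beta\in C$, then $a(b-\beta)=0$, so $a\in\ell_R(b-\beta)$; symmetrically, working with the right-hand factor, $ba=a\gamma$ would give $(b-\gamma)a=0$ and $a\in r_R(b-\gamma)$. I would extract from $abxa=axba$ that $a$ lies in $\ell_R(b+\beta)$ or in $r_R(b+\beta)$ for a suitable $\beta\in C$ (after renaming signs), and then the hypothesis that at least one of these annihilators is zero yields $a=0$. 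This proves that (i) or (ii) implies $d(R)$-semiprimeness.

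For the converse, assume $R$ is $d(R)$-semiprime and that (i) fails, so $d$ is $\mathfrak{X}$-inner, say $d=\text{\rm ad}_b$ with $b\in Q(R)$. I must show the annihilator dichotomy in (ii). Suppose it fails: there is $\beta\in C$ with both $\ell_R(b+\beta)\neq0$ and $r_R(b+\beta)\neq0$. Pick nonzero $p\in\ell_R(b+\beta)$ and nonzero $q\in r_R(b+\beta)$, so $p(b+\beta)=0$ and $(b+\beta)q=0$. The natural candidate for a counterexample element is $a=qp$ (or a related product); the point is to check $a d(x) a = qp[b,x]qp$ and show it vanishes for all $x$, while $a=qp\neq0$ by primeness (since $q\neq0\neq p$ implies $qRp\neq0$, and one arranges $a$ inside such a product). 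Concretely, for $x\in R$, $p[b,x]q = p[b+\beta,x]q = p(b+\beta)xq - px(b+\beta)q = 0$, so $p\,d(R)\,q=0$; then $qp\,d(x)\,qp = q\bigl(p\,d(x)\,q\bigr)p=0$ for all $x$, contradicting $d(R)$-semiprimeness once we know $qp\neq0$. To guarantee $qp\neq0$ — which need not hold for this particular product — I would instead replace $a$ by $qrp$ for a suitably chosen $r\in R$: since $p\neq0$ and $q\neq0$, primeness gives $qRp\neq0$, so some $qrp\neq0$, and still $(qrp)d(x)(qrp)=qr\bigl(p\,d(x)\,q\bigr)rp=0$. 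This contradiction establishes (ii).

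The main obstacle I anticipate is the $\mathfrak{X}$-inner direction of the ``only if'' part: one must produce a genuine nonzero witness $a$ with $a\,d(R)\,a=0$ from the failure of the annihilator dichotomy, and the naive choice $a=qp$ can be zero, so the argument needs the extra primeness step (inserting an $r\in R$ so that $qrp\neq0$). A secondary subtlety is making sure the elements $b+\beta$, which a priori live in $Q(R)$, interact correctly with annihilators taken in $R$; here one uses the standard fact that for $b\in Q(R)$ the sets $\ell_R(b)$ and $r_R(b)$ are the relevant objects and that $p\,d(x)\,q$ lies in $R$ even though $b\notin R$, because the cross terms $p(b+\beta)$ and $(b+\beta)q$ already vanish in $Q(R)$. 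Finally, in the ``if, case (ii)'' direction, invoking the Martindale functional-identity result requires care in the degenerate case where $a$ itself is $0$ or where $ab$ is a $C$-multiple of $a$; I would handle these by the explicit annihilator computation above rather than by the general theorem, so that the two cases of the hypothesis in (ii) are used exactly once each.
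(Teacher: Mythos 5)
Your $\mathfrak{X}$-outer case and your converse direction (taking $p(b+\beta)=0=(b+\beta)q$, computing $p\,d(x)\,q=0$, and inserting $r\in R$ so that the witness $qrp$ is nonzero) are correct and are exactly the paper's argument, which uses $w=cya$. The gap is in the $\mathfrak{X}$-inner half of the ``if'' direction. From $abxa=axba$ you conclude that $a$ lies in $\ell_R(b+\beta)$ \emph{or} in $r_R(b+\beta)$ for a suitable $\beta\in C$, and then assert that the hypothesis ``either $\ell_R(b+\beta)=0$ or $r_R(b+\beta)=0$'' yields $a=0$. That inference fails: the hypothesis is a disjunction for each fixed $\beta$, so knowing only $a\in\ell_R(b+\beta)$ gives nothing if the alternative that actually holds is $r_R(b+\beta)=0$ while $\ell_R(b+\beta)\neq0$; likewise, your two dependence relations $ab=\beta a$ and $ba=\gamma a$ may a priori involve different scalars, in which case $a\in\ell_R(b-\beta)$ and $a\in r_R(b-\gamma)$ again cannot be fed into the hypothesis. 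What is needed is that $a$ lies in \emph{both} annihilators of the \emph{same} element $b+\beta$; only then does the per-$\beta$ disjunction force $a=0$.

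The missing step is short, and it is how the paper closes the argument: once Martindale's theorem gives $ab=-\beta a$ (i.e. $a\in\ell_R(b+\beta)$), substitute this back into $abxa=axba$ to get $ax(b+\beta)a=0$ for all $x\in R$, and primeness then yields $(b+\beta)a=0$, so $a\in\ell_R(b+\beta)\cap r_R(b+\beta)$ and the hypothesis applies. (Equivalently, in your symmetric set-up one notes $(\beta-\gamma)axa=0$ for all $x$, so $\beta=\gamma$ whenever $a\neq0$.) With this step inserted your proof is complete and coincides with the paper's: Kharchenko's Proposition~\ref{pro3} for the outer case, Martindale's theorem plus the annihilator computation for the inner case, and the same witness construction for the converse.
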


\begin{proof}
``$\Longrightarrow$":\  Suppose that $R$ is $d(R)$-semiprimeness. Assume that $d$ is $\mathfrak{X}$-inner.
 Thus there exists $b\in Q(R)$ such that $d(x)=[b, x]$ for all $x\in R$. We claim that given any $\beta\in C$,  either $\ell_R(b+\beta)=0$ or $r_R(b+\beta)=0$. Otherwise, there exist $\beta\in C$ and nonzero elements $a, c\in R$ such that
 $a(b+\beta)=0=(b+\beta)c$. By the primeness of $R$, $w:=cya\ne 0$ for some $y\in R$. Then
 $$
 wd(z)w=w[b+\beta, z]w=cya(b+\beta)zw-wz(b+\beta)cya=0
 $$
 for all $z\in R$. That is, $wd(R)w=0$ with $w\ne 0$. So $R$ is not $d(R)$-semiprimeness, a contradiction.

 ``$\Longleftarrow$":\  (i)\ Assume that $d$ is $\mathfrak{X}$-outer. Let $ad(x)a=0$ for all $x\in R$.
By Proposition \ref{pro3}, we get $aya=0$ for all $y\in R$. The primeness of $R$ implies $a=0$.
This proves that $R$ is $d(R)$-semiprime.

(ii)\ Let $ad(x)a=0$ for all $x\in R$. Since $d(x)=[b, x]$ for all $x\in R$, we get
$$
0=ad(x)a=abxa-axba
$$
 for all $x\in R$.
In view of \cite[Theorem 2]{martindale}, there exists $\beta\in C$ such that $ab=-\beta a$, i.e. $a\in \ell_R(b+\beta)$.
Thus $ax(b+\beta)a=0$ for all $x\in R$. Hence $(b+\beta)a=0$, i.e., $a\in r_R(b+\beta)$.
Since either $\ell_R(b+\beta)=0$ or $r_R(b+\beta)=0$, we get $a=0$.
Hence $R$ is $d(R)$-semiprime.
\end{proof}

As a direct application of the above theorem we get the following

\begin{corollary}
\normalfont\label{cor15}
Let $R:=\mathbb{M}_m(D)$, where $D$ is a noncommutative division ring, $m\geq 1$, and let $d=\text{\rm ad}_b$, where
$b:=\sum_{i=1}^m\mu_ie_{ii}$, where $\mu_i\in D$ for all $i$.
Then $R$ is $d(R)$-semiprime iff $\mu_i\notin Z(D)$ for any $i$.
\end{corollary}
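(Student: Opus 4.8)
The plan is to deduce Corollary~\ref{cor15} directly from Theorem~\ref{thm21}. First I would record the relevant structure of $R=\mathbb{M}_m(D)$: since $D$ is a division ring, $R$ is a simple Artinian ring, hence a prime ring, and it is noncommutative for every $m\ge 1$ (because $D$ is), so Theorem~\ref{thm21} applies even in the degenerate case $m=1$. For a simple unital ring the Martindale symmetric ring of quotients is the ring itself, so $Q(R)=R$ and the extended centroid is $C=Z(R)$, which we identify with $Z(D)$ via scalar matrices. Since $b=\sum_{i=1}^m\mu_ie_{ii}\in R=Q(R)$, the derivation $d=\operatorname{ad}_b$ is $\mathfrak{X}$-inner, so alternative (i) of Theorem~\ref{thm21} can never hold; hence $R$ is $d(R)$-semiprime if and only if alternative (ii) holds, i.e. for every $\beta\in C=Z(D)$ either $\ell_R(b+\beta)=0$ or $r_R(b+\beta)=0$.

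Next I would carry out the one computation that matters. For $\beta\in Z(D)$ the element $b+\beta=\sum_{i=1}^m(\mu_i+\beta)e_{ii}$ is a diagonal matrix over $D$. Over a division ring such a matrix is invertible precisely when all of its diagonal entries $\mu_i+\beta$ are nonzero, in which case $\ell_R(b+\beta)=0=r_R(b+\beta)$; if instead $\mu_i+\beta=0$ for some $i$, then $e_{ii}(b+\beta)=0=(b+\beta)e_{ii}$, so $0\ne e_{ii}\in\ell_R(b+\beta)\cap r_R(b+\beta)$ and both annihilators are nonzero. Thus condition (ii) is equivalent to the assertion that $b+\beta$ is invertible in $R$ for every $\beta\in Z(D)$.

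Finally I would put the two pieces together. If $\mu_i\notin Z(D)$ for all $i$, then no equation $\mu_i+\beta=0$ with $\beta\in Z(D)$ can hold (it would force $\mu_i=-\beta\in Z(D)$), so $b+\beta$ is invertible for every such $\beta$, condition (ii) holds, and $R$ is $d(R)$-semiprime. Conversely, if $\mu_i\in Z(D)$ for some $i$, take $\beta:=-\mu_i\in Z(D)=C$; then the $i$-th diagonal entry of $b+\beta$ vanishes, so $\ell_R(b+\beta)\ne 0\ne r_R(b+\beta)$, condition (ii) fails, and since (i) fails as well, Theorem~\ref{thm21} shows that $R$ is not $d(R)$-semiprime. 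This gives the stated equivalence.

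The only point requiring care --- and the closest thing to an obstacle --- is the bookkeeping around the Martindale quotient ring in the simple Artinian case, namely that $Q(R)=R$ and $C=Z(D)$, together with the observation that the hypothesis ``noncommutative prime ring'' in Theorem~\ref{thm21} is met for all $m\ge 1$ (including $m=1$, where the statement reduces to the elementary fact that a division ring $D$ is $[\mu_1,D]$-semiprime iff $\mu_1\notin Z(D)$). Beyond that the argument is purely computational: once $b+\beta$ is viewed as a diagonal matrix, invertibility versus being a two-sided zero divisor is immediate from the diagonal entries.
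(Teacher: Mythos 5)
Your proof is correct and follows exactly the route the paper intends: the corollary is stated there as a direct application of Theorem \ref{thm21}, and your verification that $Q(R)=R$, $C=Z(D)$, that alternative (i) cannot hold, and that condition (ii) reduces to invertibility of the diagonal matrix $b+\beta$ (equivalently $\mu_i\notin Z(D)$ for all $i$) is precisely the computation that justifies it. No gaps; the $m=1$ case is handled correctly as well.
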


Let $x\in \mathbb{M}_m(F)$, where $F$ is a field. We denote by $\text{\rm det}\,(x)$ the determinant of $x$.

\begin{corollary}\label{cor2}
Let $R:=\mathbb{M}_m(F)$, where $F$ is a field, $m>1$, and let $d$ be a derivation of $R$.
Then $R$ is $d(R)$-semiprime iff either $d$ is outer or $d=\text{\rm ad}_b$ for some $b\in R$ such that $\text{\rm det}\,(b+\beta)\ne 0$ for any $\beta\in F$.

In addition, if $F$ is algebraically closed, then $R$ is $d(R)$-semiprime iff $d$ is outer.
\end{corollary}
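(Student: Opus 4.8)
The plan is to derive this as a direct specialization of Corollary \ref{cor15} (or equivalently of Theorem \ref{thm21}) to the case $D=F$ a field, $m>1$. Since $F$ is commutative, every inner derivation of $R=\mathbb{M}_m(F)$ is automatically $\mathfrak{X}$-inner, and $\mathfrak{X}$-outer simply means outer because $Q(R)=R$ for a simple artinian ring. So the dichotomy in Theorem \ref{thm21} reads: $R$ is $d(R)$-semiprime iff either $d$ is outer, or $d=\mathrm{ad}_b$ for some $b\in R$ with the property that for every $\beta\in C=F$, either $\ell_R(b+\beta)=0$ or $r_R(b+\beta)=0$. The remaining work is to show that for $R=\mathbb{M}_m(F)$ this annihilator condition is equivalent to $\det(b+\beta)\ne 0$ for all $\beta\in F$.

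The key step is the linear-algebra observation over matrix rings: for $A\in\mathbb{M}_m(F)$, the left annihilator $\ell_R(A)$ is zero iff $A$ is invertible iff $r_R(A)$ is zero. Indeed, if $A$ is singular then $A$ has a nontrivial right null vector $v\neq 0$, and the matrix whose columns are all $v$ is a nonzero element of $r_R(A)$; symmetrically using a left null vector (a nontrivial null vector of $A^{\mathsf T}$) produces a nonzero element of $\ell_R(A)$. Conversely, if $A$ is invertible then both one-sided annihilators vanish. Hence for a single matrix the two alternatives ``$\ell_R(A)=0$'' and ``$r_R(A)=0$'' coincide, and both are equivalent to $\det A\ne 0$. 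Applying this with $A=b+\beta$ for each $\beta\in F$, the condition in Theorem \ref{thm21}(ii) collapses to: $\det(b+\beta)\ne 0$ for all $\beta\in F$. This proves the first assertion.

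For the ``in addition'' clause, suppose $F$ is algebraically closed and $d=\mathrm{ad}_b$ is inner; I claim $R$ is not $d(R)$-semiprime. The characteristic polynomial $\det(\lambda I - b)\in F[\lambda]$ has a root $\lambda_0\in F$ since $F$ is algebraically closed, so $\det\bigl(b-\lambda_0 I\bigr)=0$, i.e.\ $\det(b+\beta)=0$ for $\beta=-\lambda_0\in F$. By the equivalence just established, the condition of Theorem \ref{thm21}(ii) fails, so $R$ is not $d(R)$-semiprime. Combining with the first part, when $F$ is algebraically closed $R$ is $d(R)$-semiprime iff $d$ is outer.

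I do not anticipate a serious obstacle here; the only mild subtlety is making sure the one-sided annihilator computation is stated for matrix rings over a field (where every singular matrix has both a left and a right null vector) rather than over an arbitrary division ring — but for $D=F$ this is immediate, and indeed Corollary \ref{cor15} already handles the division-ring generality in the diagonal case. One should also note explicitly that $m>1$ is what guarantees $R$ is noncommutative, so that Theorem \ref{thm21} applies; for $m=1$ the statement is vacuous or trivial since $\mathbb{M}_1(F)=F$ has only the zero derivation.
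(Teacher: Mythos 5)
Your proposal is correct and follows exactly the route the paper intends: the corollary is a direct specialization of Theorem \ref{thm21} to $R=\mathbb{M}_m(F)$, where $Q(R)=R$, $C=F$, $\mathfrak{X}$-outer means outer, and the one-sided annihilator condition on $b+\beta$ collapses to $\det(b+\beta)\neq 0$ since over a field a square matrix has zero left annihilator iff zero right annihilator iff it is invertible. The algebraically closed case via a root of the characteristic polynomial is likewise the intended argument, so there is nothing to add.
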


In Corollary \ref{cor2}, let $\overline F$ be the algebraic closure of the field $F$.
It is known that the matrix $b$ can be upper triangularizable in $\mathbb{M}_m(\overline F)$, that is,
there exists a unit $u$ of $\mathbb{M}_m(\overline F)$ such that
$ubu^{-1}=\sum_{1\leq i\leq j\leq n}\mu_{ij}e_{ij}$,
where $\mu_{ij}\in \overline F$. Thus $\text{\rm det}\,(b+\beta)\ne 0$ for any $\beta\in F$
iff $\mu_{ii}\in {\overline F}\setminus F$ for all $i$.\vskip4pt

Motivated by the above two results, it is natural to raise the following

\begin{problem}\label{problem5}
Let $R:=\mathbb{M}_m(D)$, where $D$ is a noncommutative division ring, $m\geq 1$. Characterize elements $b\in R$ such that
$b+\beta\in U(R)$ for any $\beta\in Z(D)$.
\end{problem}

The problem seems to be related to the triangularizability of the element $b$.
We next deal with the $d(L)$-semiprimeness of a prime ring $R$, where $L$ is a Lie ideal of $R$ and $d$ is a derivation of $R$.

\begin{lemma}\label{lem19}
Let $R$ be a prime ring with a nonzero ideal $I$, and $a_i, b_i\in Q(R)$ for $i=1,\ldots,m$. Then:

(i)\ $\sum_{i=1}^ma_ixb_i=0$ for all $x\in I$ iff $\sum_{i=1}^mb_ixa_i=0$ for all $x\in I$;\vskip4pt

(ii)\ $\sum_{i=1}^ma_iwb_i=0$ for all $w\in [I, I]$ iff $\sum_{i=1}^mb_ixa_i\in C$ for all $x\in R$.
\end{lemma}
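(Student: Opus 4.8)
The plan is to handle part (i) first and then bootstrap part (ii) from it. For part (i), the key observation is that a relation of the form $\sum_i a_i x b_i = 0$ on a nonzero ideal $I$ can be analyzed through the standard theory of generalized polynomial identities in the Martindale quotient ring $Q(R)$. Concretely, if $\{a_i\}$ are $C$-independent, then by Martindale's theorem (the same $\text{\rm [Theorem 2]\{martindale\}}$ already invoked in the proof of Theorem \ref{thm8}) the relation $\sum_i a_i x b_i = 0$ for all $x \in I$ forces every $b_i = 0$, and symmetrically $\sum_i b_i x a_i = 0$ also forces every $b_i = 0$; so both sides are simultaneously trivial. In general one reduces to the $C$-independent case by choosing a maximal $C$-independent subset of $\{a_i\}$ and rewriting the remaining $a_j$ as $C$-combinations, collecting the $b_i$ accordingly; the rewriting is symmetric in the two expressions $\sum a_i x b_i$ and $\sum b_i x a_i$, so vanishing of one on $I$ is equivalent to vanishing of the other. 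I expect this reduction to be the routine but slightly fussy heart of part (i).

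For part (ii), I would expand the commutator: for $w = [u,v]$ with $u,v \in I$ we have $\sum_i a_i [u,v] b_i = \sum_i a_i u v b_i - \sum_i a_i v u b_i = 0$ for all $u, v \in I$. Fixing $v$ and letting $u$ range over $I$, this is a relation of the type covered by part (i) with new coefficients (the $a_i$ on the left, and $v b_i$ or $a_i v$ absorbed appropriately on the sides), and one manipulates it to isolate the statement that $\sum_i b_i x a_i$ is centralized. More precisely, $\sum_i a_i [u,v] b_i = 0$ for all $u \in I$ can be rewritten, using part (i), as $\sum_i b_i u (v a_i) = \sum_i b_i u (a_i v)$ for all $u \in I$, i.e. $\sum_i b_i u [v, a_i] = 0$; applying part (i) once more converts this into a statement about $\sum_i [v,a_i] x b_i$, and unwinding gives that the element $t := \sum_i b_i x a_i$ satisfies $[v, t] = 0$ for all $v \in I$, hence (since $I$ is a nonzero ideal of the prime ring $R$, so $Z(I)$-centralizing elements are in $C$) $t \in C$. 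Conversely, if $\sum_i b_i x a_i \in C$ for all $x \in R$, then for $w = [u,v] \in [I,I]$ we get $\sum_i a_i w b_i = \sum_i a_i(uv - vu)b_i$, and pairing terms via part (i) together with the centrality hypothesis makes this telescope to $0$; this direction is a short computation.

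The main obstacle I anticipate is bookkeeping in part (ii): correctly tracking which coefficients play the role of "$a_i$" and which the role of "$b_i$" through the two applications of part (i), and making sure the final centralizing relation $[v,t]=0$ for $v \in I$ is extracted cleanly rather than only a weaker relation modulo annihilators. The passage from $[v,t]=0$ for all $v\in I$ to $t \in C$ is standard for prime rings (a nonzero ideal of a prime ring generates the same extended centroid and has trivial centralizer outside $C$), so I would cite that without belaboring it. Everything else is routine GPI manipulation inside $Q(R)$, and no new ideas beyond Proposition \ref{pro3}-style reasoning and Martindale's theorem should be required.
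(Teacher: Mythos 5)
Your overall plan coincides with the paper's: part (i) is a transposition principle (the paper simply cites \cite[Corollary 2.2]{lee2004}; your reduction to a maximal $C$-independent subset of the $a_i$ followed by Martindale's theorem \cite{martindale} and its left--right dual is a legitimate, more self-contained substitute), and part (ii) is obtained by expanding $\sum_i a_i[u,v]b_i=0$, transposing via (i), and reading off a centralizing condition. However, the transposition in (ii) is carried out incorrectly. Writing the hypothesis as $\sum_i a_i u (vb_i)-\sum_i (a_iv)u b_i=0$ for all $u\in I$, part (i) converts it to $\sum_i (vb_i)u a_i-\sum_i b_i u (a_iv)=0$, i.e. $\bigl[v,\sum_i b_i u a_i\bigr]=0$ for all $u,v\in I$: the right coefficient $vb_i$ moves to the left of $u$ intact, it does not become $b_i u (va_i)$. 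Your intermediate identity $\sum_i b_i u[v,a_i]=0$ is therefore not what (i) gives, and from it one cannot recover $\bigl[v,\sum_i b_i u a_i\bigr]=0$ (the two conditions differ by where $v$ sits); likewise the proposed second application of (i), producing $\sum_i [v,a_i]xb_i=0$, does not ``unwind'' to the centralizing relation. The repair is simply to perform the single transposition correctly, after which your centralizer argument (an element $t\in Q(R)$ with $[t,I]=0$ lies in $C$, since $I$ is a nonzero ideal of the prime ring $R$) finishes the forward direction exactly as in the paper.

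A second, smaller omission: this argument only yields $\sum_i b_i x a_i\in C$ for $x\in I$, whereas the lemma asserts it for all $x\in R$. That passage is not automatic and the paper obtains it from Chuang's theorem that $R$ and any nonzero ideal $I$ satisfy the same generalized polynomial identities with coefficients in $Q(R)$ \cite{chuang1988}; you should invoke that (or an equivalent extension result) explicitly. The converse direction of (ii) is fine as sketched: from $\sum_i b_i x a_i\in C$ one gets $\sum_i (yb_i)xa_i-\sum_i b_i x(a_iy)=0$ for all $x,y$, and one application of (i) returns $\sum_i a_i[x,y]b_i=0$ for $x,y\in I$.
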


\begin{proof}
(i)\ If follows directly from \cite[Corollary 2.2]{lee2004}.

(ii)\ Applying (i) we have
\begin{eqnarray*}
\sum_{i=1}^ma_i[x, y]b_i=0\ \forall x, y\in I\ \ &\Leftrightarrow&\ \sum_{i=1}^ma_ix(yb_i)-(a_iy)xb_i=0\ \forall x, y\in I\\
&\Leftrightarrow&\sum_{i=1}^m(yb_i)xa_i-b_ix(a_iy)=0\ \forall x, y\in I\\
&\Leftrightarrow&\big[y, \sum_{i=1}^mb_ixa_i\big]=0\ \forall x, y\in I\\
&\Leftrightarrow&\sum_{i=1}^mb_ixa_i\in C\ \forall x\in I\\
&\Leftrightarrow&\sum_{i=1}^mb_ixa_i\in C\ \forall x\in R,
\end{eqnarray*}
where the last equivalence holds as $R$ and $I$ satisfy the same GPIs with coefficients in $Q(R)$ (see \cite[Theorem 2]{chuang1988}).
\end{proof}

The following is the second main result in this section.

\begin{theorem}\label{thm23}
Let $R$ be a non-exceptional prime ring, not a domain, with a noncentral Lie ideal $L$, and let $d$ be a derivation of $R$.
Then:

(i)\ If $d$ is $\mathfrak{X}$-outer, then $R$ is $d(L)$-semiprime;

(ii)\ If $d$ is $\mathfrak{X}$-inner, then $R$ is $d(L)$-semiprime iff it is $d(R)$-semiprime.
\end{theorem}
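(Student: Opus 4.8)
The plan is to reduce everything to the $d(R)$-semiprimeness via the machinery already developed for Lie ideals, namely Lemma \ref{lem1}, Lemma \ref{lem4}, and the characterization in Theorem \ref{thm21}. Since $R$ is non-exceptional and $L$ is noncentral, Lemma \ref{lem18} gives $[L,L]\ne 0$, so $I:=\mathcal{I}([L,L])$ is a nonzero ideal of $R$ and, by Lemma \ref{lem4}(ii), $[I,R]\subseteq [L,R]\subseteq L$. This lets us pass freely between $L$ and a big ideal when convenient.

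First I would handle (i). Suppose $d$ is $\mathfrak{X}$-outer and let $ad(w)a=0$ for all $w\in L$, with $a\in R$. Applying $d$ to a product of commutators inside $L$ (or directly using that $d(xy)=d(x)y+xd(y)$ together with $[L,R]\subseteq L$), one derives from $aLd(L)aL=0$-type relations a generalized polynomial differential identity on a nonzero ideal $I\subseteq L+L^2$. Concretely, starting from $ad(w)a=0$ for $w\in L$ and using $aLa$-manipulations as in the proof of Lemma \ref{lem1}(i), I expect to reach $a\,d(x)\,a=0$ (or a finite sum $\sum a_i d(x) b_i + \sum c_j x d_j=0$) for all $x$ in a nonzero ideal $I$. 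Since $I$ and $R$ satisfy the same differential identities with coefficients in $Q(R)$ (standard, e.g.\ via \cite{chuang1988} or the reference used for Lemma \ref{lem19}), this holds for all $x\in R$; then Proposition \ref{pro3} yields $aya=0$ for all $y\in R$, and primeness gives $a=0$. Hence $R$ is $d(L)$-semiprime.

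For (ii), write $d=\text{\rm ad}_b$ with $b\in Q(R)$. One direction is immediate: if $R$ is $d(R)$-semiprime, then for any $a$ with $ad(L)a=0$ the argument of Lemma \ref{lem1}(i) (applied with the Lie ideal $L$ and the fact that $d(x)=[b,x]$ turns the relevant products into ordinary GPIs) promotes $ad(L)a=0$ to $ad(I)a=0$ for a nonzero ideal $I$, hence to $ad(R)a=0$ by the GPI coincidence, so $a=0$. For the converse, suppose $R$ is $d(L)$-semiprime but not $d(R)$-semiprime; by Theorem \ref{thm21} there exist $\beta\in C$ and nonzero $c,e\in R$ with $c(b+\beta)=0=(b+\beta)e$, and for suitable $y$ the element $w:=eyc\ne 0$ satisfies $w\,d(z)\,w=0$ for all $z\in R$, in particular for all $z\in L$, contradicting $d(L)$-semiprimeness. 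So $d(L)$-semiprime forces $d(R)$-semiprime, and the two are equivalent.

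The main obstacle is the first reduction step in both parts: turning the hypothesis $a\,d(L)\,a=0$ (a relation involving the \emph{derivation image of a Lie ideal}, not of the whole ring) into a genuine generalized polynomial differential identity over a nonzero ideal, so that Proposition \ref{pro3} and the $I$-versus-$R$ identity transfer (Lemma \ref{lem19}-style) become applicable. This requires carefully mimicking the $aL^2b$-expansion trick of Lemma \ref{lem1}(i) in the differential setting — using $d([x,r]) = [d(x),r] + [x,d(r)]$ to absorb the derivation — and then invoking Lemma \ref{lem4} to replace $L$ by $\mathcal{I}([L,L])$. Once that GPDI is in hand, everything else is a routine application of the theorems already proved. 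The non-exceptional, non-domain hypotheses are exactly what guarantee $[L,L]\ne0$ and $RC\cong\mathbb{M}_2(C)$ is excluded as an obstruction, so that the ideal $\mathcal{I}([L,L])$ is available and behaves well.
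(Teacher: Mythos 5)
Your proposal has a genuine gap, and it sits exactly where you yourself locate ``the main obstacle'': the claimed promotion of $a\,d(L)\,a=0$ to $a\,d(I)\,a=0$ for a nonzero ideal $I$ (or to a bona fide differential/GPI identity over $I$) is asserted, not proved, and it is not a routine adaptation of Lemma \ref{lem1}(i). That lemma's trick exploits that the middle set is a Lie ideal and that $aLb=0$ propagates to $aL^2b=0$ via $a[L,RaL]b=0$; but $d(L)$ is not a Lie ideal, and additivity of $d$ gives no control of $a\,d(xy)\,a$ from $a\,d(x)\,a=0=a\,d(y)\,a$, since $a\,d(xy)\,a=a\,d(x)y\,a+a\,xd(y)\,a$. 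The paper never performs such a promotion. In part (i) it substitutes $w=[x,r]$, obtaining $a\bigl([d(x),r]+[x,d(r)]\bigr)a=0$, and applies Kharchenko (Proposition \ref{pro3}) \emph{immediately} to eliminate the derivation, yielding $a[L,R]a=0$; it then runs the Lie-ideal manipulation (as in Lemma \ref{lem8}(i)) to get $[a,L]=0$, $a^2=0$, $aLa=0$, and finishes with the $L$-semiprimeness of $R$ (Theorem \ref{thm3}(ii) via Lemma \ref{lem18}) --- not with $aya=0$ from a second application of Proposition \ref{pro3}. In part (ii) the hard implication (which you mislabel as ``immediate''; the genuinely immediate one is $d(L)$-semiprime $\Rightarrow$ $d(R)$-semiprime because $d(L)\subseteq d(R)$) cannot be handled by ``the argument of Lemma \ref{lem1}(i) plus GPI coincidence'': from $abxa=axba$ for $x\in[K,K]$, $K=\mathcal{I}([L,L])$, one only gets, via Lemma \ref{lem19}(ii), that $ayab-baya\in C$ for all $y\in R$, and the possibility that this central value is nonzero must be excluded. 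The paper does this with \cite[Fact 3.1]{CL} and \cite[Theorem 1.1]{CL} when $\dim_CRC>4$, and by a separate argument using $[KC,KC]+C=RC$ when $\dim_CRC=4$ and $\mathrm{char}\,R\neq 2$; this is precisely where the non-exceptionality hypothesis is consumed, and your sketch never confronts it.

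Your treatment of the remaining direction of (ii) (via Theorem \ref{thm21} and the element $w=eyc$) is correct but unnecessarily roundabout, since $d(L)\subseteq d(R)$ settles it in one line. As it stands, then, the proposal records the correct target identities and the correct auxiliary results, but the two reductions it defers are the actual content of the theorem, so the proof is incomplete.
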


\begin{proof} Since $R$ is non-exceptional, either $\text{\rm char}\,R\ne 2$ or $\dim_CRC>4$. In view of Lemma \ref{lem18}, we have $[L, L]\ne 0$. Thus, by Theorem \ref{thm3} (ii), $R$ is $L$-semiprime.

(i)\ Assume that $ad(L)a=0$, where $a\in R$.
Let $x\in L$ and $r\in R$. Then $ad([x, r])a=0$ and so
\begin{equation*}
a\big([d(x), r]+[x, d(r)]\big)a=0.
 \end{equation*}
Since $d$ is $\mathfrak{X}$-outer, applying Proposition \ref{pro3} we get
\begin{equation*}
a\big([d(x), r]+[x, z]\big)a=0
\end{equation*}
 for all $x\in L$ and all $r, z\in R$. In particular, $a[x, z]a=0$ for all $x\in L$ and $z\in R$.
 That is, $a[L, R]a=0$.

In particular, $a[L, RaR]a=0$. Since $a[L, R](aR)a=0$, we get $aR[L, aR]a=0$. By the primeness of $R$, we have $[L, aR]a=0$
and so $[L, a]Ra=0$. Thus $[a, L]=0$. By the fact that $a[L, R]a=0$ and $[L, R]\subseteq L$, we have $a^2[L, R]=0$.
This implies $a^2=0$ as $L$ is noncentral.

It follows from $a[L, aR]a=0$ and $a^2=0$ that $aLaRa=0$, implying $aLa=0$. Since $R$ is $L$-semiprime, we get $a=0$, as desired.
\vskip4pt

(ii)\ Assume that $d$ is $\mathfrak{X}$-inner.
Clearly, if $R$ is $d(L)$-semiprime, then it is $d(R)$-semiprime.
Conversely, assume that $R$ is $d(R)$-semiprime.
Since $d$ is $\mathfrak{X}$-inner, there exists $b\in Q(R)$ such that $d(x)=[b, x]$ for all $x\in R$. In view of Lemma \ref{lem18}, $[L, L]\ne 0$.
Hence, by Lemma \ref{lem4} (i), we have $0\ne [K, R]\subseteq L$, where $K:=\mathcal{I}([L, L])$.

Let $ad(L)a=0$, where $a\in R$. The aim is to prove $a=0$. Then
$a[b, x]a=0$ and so
\begin{equation}\label{eq:11}
abxa=axba
\end{equation}
for all $x\in [K, K]\subseteq L$. Since $R$ is non-exceptional, the proof is divided into the following two cases.

Case 1:\ $\dim_RC>4$.
Applying Lemma \ref{lem19} (ii) to Eq.\eqref{eq:11}, we have
\begin{equation}\label{eq:12}
ayab-baya\in C
\end{equation}
for all $y\in R$.

Suppose first that $ay_0ab-bay_0a\ne 0$ for some $y_0\in R$. Applying \cite[Fact 3.1]{CL} to Eq.\eqref{eq:12}, we get $Q(R)=RC$ and $\dim_CRC<\infty$.
It follows from \cite[Theorem 1.1]{CL} that $\dim_CRC\leq 4$, a contradiction.
Thus $ayab-baya=0$
for all $y\in R$. In view of Lemma \ref{lem19} (i), $abya-ayba=0$
for all $y\in R$. That is, $ad(R)a=0$. Since $R$ is $d(R)$-semiprime, we get $a=0$, as desired.

Case 2:\ $\dim_RRC=4$ and $\text{\rm char}\,R\ne 2$. Since $R$ is not a domain, we have $RC\cong \mathbb{M}_2(C)$.
Note that $KC=RC$ in this case. Moreover, $[KC, KC]+C=RC$ as  $\text{\rm char}\,R\ne 2$. Clearly, Eq.\eqref{eq:11}
holds for all $x\in [KC, KC]$.

Let $y\in RC$. Then $y=x+\beta$ for some $x\in [KC, KC]$ and $\beta\in C$. By Eq.\eqref{eq:11} we have
$$
abya=abxa+ab\beta a=axba+\beta aba=a(x+\beta)ba=ayba.
$$
Thus $a[b, y]a=0$ for all $y\in RC$. In particular, $ad(R)a=0$. Since $R$ is $d(R)$-semiprime, we get $a=0$, as desired.
\end{proof}

By Theorems \ref{thm23} and \ref{thm21}, we have the following

\begin{corollary}\label{cor8}
Let $R$ be a non-exceptional prime ring, not a domain, with a noncentral Lie ideal $L$, and let $d$ be a derivation of $R$.
Then $R$ is $d(L)$-semiprime if one of the following conditions holds:

(i)\ $d$ is $\mathfrak{X}$-outer;

(ii)\ There exists $b\in Q(R)$ such that $d(x)=[b, x]$ for all $x\in R$. Moreover, given any $\beta\in C$,
either $\ell_R(b+\beta)=0$ or $r_R(b+\beta)=0$.
\end{corollary}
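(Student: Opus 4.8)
The plan is to deduce Corollary \ref{cor8} by simply combining the two main theorems of the section, so the ``proof'' is really just a matter of checking that the hypotheses line up. First I would observe that in both cases (i) and (ii) of the corollary the standing assumptions of Theorem \ref{thm23} are in force: $R$ is a non-exceptional prime ring, not a domain, and $L$ is a noncentral Lie ideal of $R$. Hence Theorem \ref{thm23} is directly applicable.

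In case (i), $d$ is $\mathfrak{X}$-outer, and Theorem \ref{thm23}(i) gives at once that $R$ is $d(L)$-semiprime, with nothing further to prove.

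In case (ii), the hypothesis says $d=\text{\rm ad}_b$ on $R$ for some $b\in Q(R)$, so $d$ is $\mathfrak{X}$-inner. The condition that for every $\beta\in C$ either $\ell_R(b+\beta)=0$ or $r_R(b+\beta)=0$ is precisely condition (ii) of Theorem \ref{thm21}; since $R$ is noncommutative and prime, Theorem \ref{thm21} therefore tells us that $R$ is $d(R)$-semiprime. Now invoke Theorem \ref{thm23}(ii): for an $\mathfrak{X}$-inner derivation on such an $R$, $d(L)$-semiprimeness is equivalent to $d(R)$-semiprimeness, so $R$ is $d(L)$-semiprime. This finishes the argument. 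The only point requiring a moment's care—and the closest thing to an obstacle—is verifying that the noncommutativity hypothesis needed to apply Theorem \ref{thm21} is automatic here: since $L$ is assumed noncentral, $R$ cannot be commutative, so Theorem \ref{thm21} indeed applies.
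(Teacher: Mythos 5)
Your proposal is correct and matches the paper's intended argument exactly: the paper derives Corollary \ref{cor8} precisely by combining Theorem \ref{thm23} with Theorem \ref{thm21} in the way you describe. Your extra check that noncentrality of $L$ forces $R$ to be noncommutative (so Theorem \ref{thm21} applies) is a sensible, correct detail.
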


\begin{example}\label{example4}
\normalfont Let $R:=\mathbb{M}_{2}(F)$, where $F$ is a field of characteristic $2$.
We choose $F$ such that there exists $\eta\in F$ satisfying $%
\eta\notin F^{(2)}:=\{\mu^2\mid \mu\in F\}$.
 Let
\begin{equation*}
L=\{\left[
\begin{array}{cc}
\alpha & \beta \\
\beta & \alpha%
\end{array}%
\right] \mid \alpha ,\beta \in F\}=[a,R],
\end{equation*}%
where $a:=\left[
\begin{array}{cc}
1 & 1 \\
1 & 1%
\end{array}%
\right] $. Then $L$ is a noncentral Lie ideal of $R$ satisfying $[L,L]=0$.
Since $a\in L$ and $a^{2}=0$, we get $aLa=0$ but $a\neq 0$. Thus, $R$ is not
$L$-prime.
Let $d(x):=[b, x]$ for all $x\in R$, where
$b:=\left[
\begin{array}{cc}
1 & 1 \\
\eta & 1%
\end{array}%
\right].$ Then $d$ is an inner derivation of $R$.
Since $d(L)\subseteq L$ and $R$ is not $L$-semiprime, it follows that $R$ is not $d(L)$-semiprime.
Notice that $b+\beta$ is a unit of $R$ for all $\beta\in F$. In view of Theorem \ref{thm21} (ii), $R$ is $d(R)$-semiprime.
\end{example}

\section{Semiprime rings I:\ main results}
Recall that, given a semiprime ring $R$, we denote by $Q(R)$ the Martindale symmetric
ring of quotients of $R$ and by $C$ the extended centroid of $R$.
The following two sections are about proving Theorems \ref{thm22} and \ref{thm13}.

\begin{theorem}
\label{thm22} Let $R$ be a semiprime ring, and let
$L$ be a Lie ideal of $R$. Suppose that $\ell _{R}([L,R])=0$. Then there
exists an idempotent $e\in C$ such that

(i)\ $ex^2\in C$ for all $x\in \widetilde L$, and

(ii)\ $(1-e)R$ is $(1-e)L$-semiprime.

\noindent In addition, if $R$ is $2$-torsion free, then $e=0$ and so $R$ is $L$-semiprime.
\end{theorem}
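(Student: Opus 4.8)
The plan is to clean up the hypothesis, reduce to the prime case, settle it there by a generalized-identity argument, and glue over the Boolean spectrum of $C$.

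\emph{Groundwork.} First I would note that $\ell_R(L)$ and $r_R(L)$ are two-sided ideals of $R$ (e.g.\ $as\ell=a\ell s+a[s,\ell]$ with $[s,\ell]\in L$), and that they vanish: if $aL=0$ then $aR\subseteq\ell_R(L)$, so $a[\ell,r]=a\ell r-ar\ell=0$ for all $\ell\in L$, $r\in R$, whence $a\in\ell_R([L,R])=0$; thus $\ell_R(L)=0$, and symmetrically $r_R(L)=0$. I would also record (it streamlines, though it is not strictly needed below) that $aLa=0$ forces $a^2=0$: if $a\ne0$ then $aL\ne0$, and substituting $ra$ for $r$ in $a[\ell,r]a=0$ gives $a\ell ra^2=ara\cdot\ell a=0$, so $aLRa^2=0$; semiprimeness then yields $a^2R(aL)=0$, hence $a^2Ra\subseteq\ell_R(L)=0$, hence $a^2Ra^2=0$, hence $a^2=0$.

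\emph{The prime case.} Here the key statement is: if $S$ is a prime ring with extended centroid $C$, $M$ a noncentral Lie ideal of $S$, and $cMc=0$ for some $0\ne c\in Q(S)$, then $SC\cong\mathbb{M}_2(C)$, $[M,M]=0$, and $w^2\in C$ for every $w$ in the subring $\widetilde M$ of $S$ generated by $M$. To prove it, I would feed $c[\ell,r]c=0$ ($\ell\in M$, $r\in S$) into Martindale's theorem \cite[Theorem 2]{martindale} in the form $c\ell\otimes c=c\otimes\ell c$ in $Q(S)\otimes_CQ(S)$, and compare coordinates to get $[c,\ell]=0$ for all $\ell\in M$, hence $[c,\widetilde M]=0$. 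If $[M,M]\ne0$ then, by Lemma \ref{lem5}, $c$ centralizes the nonzero ideal $\mathcal{I}([M,M])\subseteq\widetilde M$, so $c\in C$ and $cMc=c^2M=0$ gives $M=0$, a contradiction; hence $[M,M]=0$. By Lemma \ref{lem18}, $S$ is exceptional; it is not a domain (otherwise $c\ne0$ and $Mc^2=cMc=0$ would give $M=0$), so $SC\cong\mathbb{M}_2(C)$. Finally $c$ is non-scalar (else it is a unit and $M=0$), so $\widetilde M\subseteq MC\subseteq CI+Cc$; since $MC$ is a noncentral Lie ideal contained in the $2$-dimensional space $CI+Cc$ and contains the $2$-dimensional traceless space $[c,\mathbb{M}_2(C)]$, it follows that $\mathrm{tr}(c)=0$ (using $\mathrm{char}\,C=2$), hence $c^2\in C$ by Cayley--Hamilton, and then $(\alpha I+\beta c)^2=\alpha^2I+\beta^2c^2\in C$. (This lemma can also be extracted from Theorem \ref{thm8}.)

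\emph{Gluing.} Set $Q:=Q(R)$ and $W:=\{c\in Q:cLc=0\}$, and let $e\in C$ be the central idempotent with $\mathrm{ann}_Q(\mathcal{I}_Q(W))=(1-e)Q$ (such $e$ exists since $Q$ is centrally closed semiprime and $C$ is regular self-injective); then $W\subseteq eQ$, so $ec=c$ for all $c\in W$. Property (ii) is immediate: if $b\in R$ and $(1-e)bLb=0$, then $c:=(1-e)b$ lies in $W$, so $c=ec=0$. For (i), suppose $ex_0^2\notin C$ for some $x_0\in\widetilde L$; then $e[x_0^2,r_0]\ne0$ for some $r_0\in R$, so there is a maximal ideal $M$ of $C$ with $e[x_0^2,r_0]\notin MQ$. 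In the centrally closed prime factor $Q/MQ$ (see \cite{beidar1996}) the image of $e$ is $1$, so the image of $x_0$ has noncentral square in $\bar R:=R/(R\cap MQ)$; moreover $\ell_R([L,R])=0$ makes $\mathcal{I}_R([L,R])$ an essential ideal of $R$, hence of full support, so the image $\bar L$ of $L$ is a noncentral Lie ideal of $\bar R$; and since $e$ has nonzero image, $W\not\subseteq MQ$, so there is a nonzero element of $Q/MQ$ killed on both sides by $\bar L$. The prime-case lemma then forces the squares in the subring generated by $\bar L$ to be central, contradicting the statement about the image of $x_0$. Hence $ex^2\in C$ for all $x\in\widetilde L$.

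\emph{The $2$-torsion-free case, and the main difficulty.} If $R$ is $2$-torsion free, then $2$ is a central non-zero-divisor of $R$, hence invertible in the regular ring $C$; thus $\tfrac12\in C$, so every prime factor $Q/MQ$ has characteristic $\ne2$ and is therefore non-exceptional. If $e\ne0$, there is a nonzero $c_0\in W$, and choosing $M$ with $c_0\notin MQ$ produces, just as above, a prime factor with a noncentral Lie ideal $\bar L$ admitting a nonzero element killed on both sides by $\bar L$, which by the lemma must be exceptional---impossible. Hence $e=0$, and then (ii) says $R$ is $L$-semiprime. The step I expect to be hardest is the prime-case lemma in characteristic $2$: locating the exceptional configuration $SC\cong\mathbb{M}_2(C)$ via the generalized identity, together with the matrix computation that yields central squares, is the delicate point; a secondary technical ingredient is the reduction to prime factors in the gluing step, which rests on the orthogonal-completeness machinery of \cite{beidar1996}.
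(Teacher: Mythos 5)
Your choice of $e$ via the obstruction set $W=\{c\in Q:\ cLc=0\}$ does make (ii) trivial, and your prime-case lemma is essentially the analysis inside Theorem \ref{thm8} and is fine (modulo the small slip that the traceless $2$-dimensional space should be $[u,\mathbb M_2(C)]$ for a non-scalar $u\in MC$, since $c\in MC$ is only known a posteriori). The genuine problems are in the gluing step, which is where all the content of (i) and of the $2$-torsion-free claim now sits, and two of its inferences are false as stated. First, ``$e$ has nonzero image in $Q/MQ$, hence $W\not\subseteq MQ$'' does not follow: $e$ is the supremum in the complete Boolean algebra of idempotents of $C$ of the support idempotents $E(c)$, $c\in W$, and a supremum of idempotents lying in a maximal ideal of that Boolean algebra need not lie in it (the free-ultrafilter phenomenon); every single $c\in W$ may die in the chosen stalk even though $\bar e=1$, and then there is no nonzero $\bar c$ with $\bar c\bar L\bar c=0$ and the contradiction in (i) evaporates. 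Second, ``$\ell_R([L,R])=0$ makes $\mathcal I([L,R])$ essential, hence of full support, hence $\bar L$ is noncentral in the stalk'' is also false: an essential ideal can be contained in $MQ$ (take $R$ the eventually-scalar sequences in $\prod_n\mathbb M_2(F)$, $L=\bigoplus_n\mathbb M_2(F)$, and $M$ corresponding to a free ultrafilter; then $\ell_R([L,R])=0$ but $\bar L=0$). In (i) this second point happens to be repairable, since $[x_0^2,r_0]\in[\widetilde L,R]=[L,R]$ by Lemma \ref{lem4}(i) and your $M$ keeps it alive; but in the $2$-torsion-free part you choose $M$ only so that $c_0\notin MQ$, and the noncentrality of $\bar L$ in that stalk is then unjustified.

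Both defects can likely be repaired by choosing $M$ more carefully: $\ell_R([L,R])=0$ forces the support idempotent of $[L,R]$ to be $1$ (if it were $f\ne1$, a dense ideal $I$ with $0\ne qI\subseteq R$ for $0\ne q\in(1-f)Q$ would give $0\ne qI\subseteq\ell_R([L,R])$), so one can pick $M$ missing a common nonzero lower bound of the supports of the finitely many elements that must survive (some $c\in W$ together with $e[x_0^2,r_0]$, resp.\ $c_0$ together with some $w\in[L,R]$). Even then, one more point needs care: you apply the lemma to $S=\bar R=R/(R\cap MQ)$ with the surviving element in $Q/MQ$, but $\bar R$ need not be prime, $\bar L$ is a Lie ideal of $\bar R$ rather than of the prime stalk $Q/MQ$, and you have not identified $Q/MQ$ with (a subring of) $Q(\bar R)$; so the lemma's hypotheses are verified for neither candidate ring without further orthogonal-completeness work from \cite{beidar1996}. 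For comparison, the paper avoids stalks altogether: it defines $e$ by $\ell_Q\big(\sum_{x\in\widetilde L}R[x^2,R]R\big)=eQ$, so (i) is immediate, proves (ii) by reducing modulo arbitrary prime ideals of $(1-e)R$ and invoking Herstein's Lemma \ref{lem3}, intersecting over all prime ideals (so no single quotient has to capture everything), and gets the $2$-torsion-free statement from Lemma \ref{lem7}.
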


\begin{remark}\label{remark1}
\normalfont
In Theorem \ref{thm22}, since $\ell _{R}([L,R])=0$, $R$ is $L$-semiprime iff it is $[L, R]$-semiprime (see Corollary \ref{cor12} below).
However, we cannot conclude that $R$ is $[L, R]$-semiprime unless $R$ is $2$-torsion free (see Remark \ref{remark10} (i)).
\end{remark}

Applying  Corollary \ref{cor12} and the $2$-torsion free case of Theorem \ref{thm22}, we have the following

\begin{corollary}\label{cor14}
Let $R$ be a $2$-torsion free semiprime ring, and let
$L$ be a Lie ideal of $R$. Then $R$ is $[L, R]$-semiprime iff $\ell _{R}([L,R])=0$.
\end{corollary}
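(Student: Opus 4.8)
The plan is to split the biconditional and obtain each implication from ingredients that, as the sentence preceding the corollary indicates, are proved afterwards: the $2$-torsion free case of Theorem \ref{thm22} and Corollary \ref{cor12}. The forward implication needs no hypothesis at all: if $a\in\ell_R([L,R])$, then $a[L,R]=0$, so in particular $a[L,R]a=0$, and the $[L,R]$-semiprimeness of $R$ forces $a=0$; hence $\ell_R([L,R])=0$. (This merely records the general fact that an $X$-semiprime ring has $\ell_R(X)=0$, so nothing about $R$ is used here.)

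For the converse, assume $\ell_R([L,R])=0$. Since $R$ is $2$-torsion free, the last assertion of Theorem \ref{thm22} applies verbatim: the central idempotent $e$ produced there vanishes, so $R$ is $L$-semiprime. It remains to promote $L$-semiprimeness to $[L,R]$-semiprimeness. The inclusion $[L,R]\subseteq L$ is of no help here, as it only yields the (trivial) reverse implication; instead one invokes Corollary \ref{cor12}, whose content is precisely that, under the standing hypothesis $\ell_R([L,R])=0$, the $L$-semiprimeness and the $[L,R]$-semiprimeness of $R$ are equivalent. Thus $R$ is $[L,R]$-semiprime, and the proof is complete.

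So there is essentially no obstacle \emph{inside} this corollary; the genuine work lies in Theorem \ref{thm22} and Corollary \ref{cor12}. What does deserve emphasis is that the $2$-torsion hypothesis is sharp, not cosmetic: by Remark \ref{remark10}(i) (see also Remark \ref{remark1}), over a field $F$ of characteristic $2$ there is a noncentral Lie ideal $L$ of $\mathbb{M}_2(F)$ containing a nonzero square-zero element $a$ with $aLa=0$, while $\ell_R([L,R])=0$ still holds; then $R$ is not $L$-semiprime, hence not $[L,R]$-semiprime since $[L,R]\subseteq L$. This is precisely the situation in which the idempotent $e$ of Theorem \ref{thm22} is nonzero (indeed $e=1$, consistently with $ex^2\in C$ for all $x\in\widetilde L$). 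Once the two cited ingredients are granted the deduction is immediate, while the failure of the corollary without $2$-torsion freeness is genuine.
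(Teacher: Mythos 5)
Your proof is correct and follows the paper's route exactly: the paper derives Corollary \ref{cor14} precisely by combining the trivial forward implication with the $2$-torsion free case of Theorem \ref{thm22} (giving $L$-semiprimeness) and Corollary \ref{cor12} (upgrading to $[L,R]$-semiprimeness under $\ell_R([L,R])=0$). Your added remarks on the sharpness of the $2$-torsion hypothesis, via Remark \ref{remark10}(i) with $e=1$, are also consistent with the paper.
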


When $B$ is a subset of $\mathit{Id}(R)$
such that $B^+$ is a Lie ideal of $R$, we can get better conclusions for arbitrary semiprime rings
as follows.

\begin{theorem}
\label{thm13} Let $R$ be a semiprime ring, and let $B$ be a subset of $Id(R)$
such that $B^{+}$ is a Lie ideal of $R$. Then $\ell _{R}([B,R])=0$ iff $R$
is $[B,R]$-semiprime.
\end{theorem}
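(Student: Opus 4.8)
The plan is to prove the two implications separately. The forward implication is immediate: if $R$ is $[B,R]$-semiprime and $a[B,R]=0$, then $a[B,R]a=0$, so $a=0$; hence $\ell_R([B,R])=0$.

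For the converse, I set $L:=B^{+}$, so that $L$ is a Lie ideal with $[B,R]=[L,R]$ and $\ell_R([L,R])=0$. The first step is to apply Theorem~\ref{thm22}: it produces an idempotent $e\in C$ with $ex^{2}\in C$ for all $x\in\widetilde L$ and with $(1-e)R$ being $(1-e)L$-semiprime. I would then show $e=0$, and this is the only place the hypothesis $B\subseteq Id(R)$ is used: for $f\in B$ we have $f\in L\subseteq\widetilde L$ and $f^{2}=f$, so $ef=ef^{2}\in C$; since $e$ is central in $Q(R)$, the element $ef$ commutes with every $r\in R$, which gives $efr=erf$, i.e. $e[f,r]=0$. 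Hence $e[B,R]=e[L,R]=0$, so (again by centrality of $e$) $e\,\mathcal I([L,R])=0$. Since $\ell_R(\mathcal I([L,R]))\subseteq\ell_R([L,R])=0$, the ideal $\mathcal I([L,R])$ has zero annihilator in the semiprime ring $R$, hence is essential and therefore dense; an element of $Q(R)$ annihilating a dense ideal must be $0$, so $e=0$. By Theorem~\ref{thm22}(ii) this means $R=(1-e)R$ is $(1-e)L$-semiprime, i.e. $R$ is $B^{+}$-semiprime.

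The remaining, and main, difficulty is to upgrade ``$B^{+}$-semiprime'' to the formally stronger ``$[B^{+},R]$-semiprime'' — that is, to control the gap between $L=B^{+}$ and its commutator $[L,R]\subseteq L$. I would handle this by an annihilator argument. Assume $a[L,R]a=0$. Inserting $ra$ in place of $r$ in $a[x,r]a=0$ (for $x\in L$, $r\in R$) and using $a[L,R]a=0$ yields $aR[x,a]a=0$; left-multiplying by $[x,a]$ and invoking semiprimeness gives $[x,a]a=0$, i.e. the identity $xa^{2}=axa$ for all $x\in L$. Substituting $[x,r]\in[L,R]\subseteq L$ for $x$ in this identity gives $[x,r]a^{2}=a[x,r]a=0$, so $[L,R]a^{2}=0$; expanding $[x,rs]a^{2}=0$ then gives $[L,R]Ra^{2}=0$, and these together force $\mathcal I([L,R])a^{2}=0$. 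Thus $a^{2}\in r_R(\mathcal I([L,R]))=\ell_R(\mathcal I([L,R]))=0$ (using the standard fact that left and right annihilators of a two-sided ideal coincide in a semiprime ring), so $a^{2}=0$; but then $xa^{2}=axa$ collapses to $aLa=0$, and $L$-semiprimeness gives $a=0$. This shows $R$ is $[B,R]$-semiprime and completes the proof.

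The hard part is the last paragraph: everything hinges on extracting the identity $xa^{2}=axa$ on $L$ from the single relation $a[L,R]a=0$, which is what lets $a[L,R]a=0$ propagate to $\mathcal I([L,R])a^{2}=0$; essentiality of $\mathcal I([L,R])$ (forced by $\ell_R([L,R])=0$) then delivers $a^{2}=0$ and hence $aLa=0$. The idempotency hypothesis on $B$ enters only through the identity $ef=ef^{2}\in C$, which is exactly what is needed to kill the idempotent $e$ coming from Theorem~\ref{thm22}.
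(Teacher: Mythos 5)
Your proof is correct, but it follows a genuinely different route from the paper. The paper proves the nontrivial direction directly: from $a[B,R]a=0$ it uses the commutators $ex(1-e)=[e,ex(1-e)]\in[B,R]$ and semiprimeness to get $[e,a]=0$ for all $e\in B$, hence $\big[B^{+},[a,R]\big]=0$; passing to prime quotients and applying Herstein's result (Lemma \ref{lem3}) then yields $[a,R][B,R]=0$, so $\ell_R([B,R])=0$ forces $a\in Z(R)$, and finally $a^2[B,R]=0$ gives $a^2=0$ and $a=0$ since the center of a semiprime ring is reduced. You instead derive the theorem from Theorem \ref{thm22}: the key new observation is that for $f\in B$ the relation $ef=ef^{2}\in C$, together with centrality of $e$, gives $e[f,r]=0$, so $e$ annihilates $\mathcal I([L,R])$, an ideal with zero annihilator, and the defining property of $Q(R)$ kills $e$ without any $2$-torsion hypothesis; this is a nice conceptual point, showing that idempotency of the generators plays exactly the role that $2$-torsion-freeness plays in the addendum to Theorem \ref{thm22}. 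Your final paragraph, upgrading $L$-semiprimeness to $[L,R]$-semiprimeness under $\ell_R([L,R])=0$, is a correct re-derivation of Lemma \ref{lem8}(ii)/Corollary \ref{cor12} (via the identity $xa^{2}=axa$ on $L$ rather than via $[a,[L,R]]=0$); you could shorten the argument by simply citing Corollary \ref{cor12}, which precedes the proof of Theorem \ref{thm13} and does not depend on it. The trade-off: the paper's argument is shorter and more elementary (prime quotients plus Lemma \ref{lem3}), while yours exhibits Theorem \ref{thm13} as a consequence of the general Lie-ideal decomposition at the cost of invoking the Martindale quotient machinery.
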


The meaning of the two results above deserves further understanding. Clearly, given a Lie ideal $L$ of a ring $R$,
the assumption $\ell_R(L)=0$ is
necessary to ensure the $L$-semiprimeness of $R$ but is in general not sufficient.
Thus Corollary \ref{cor14} and Theorem \ref{thm13} are indeed very special and have good conclusions.
It is natural to raise the following

\begin{problem}\label{problem2}
Let $R$ be a semiprime ring with a subset $X$, and let
$$
X^{(n)}:=\{x^n\mid x\in X\},
$$
 where $n$ is a positive integer.

(i)\ Characterize Lie ideals $L$ of $R$ such that $R$ is $L$-semiprime iff $\ell_R(L)=0$.

(ii)\ Find subsets $X$ of $R$  such that $R$ is $X$-semiprime iff $\ell_R(X)=0$.

(iii)\ Let $L$ be a Lie ideal of $R$. If $R$ is $L$-semiprime, is then it $L^{(n)}$-semiprime?

(iv)\ If $\ell_R([E(R), R])=0$, is then $R$ an $[E(R), R]^{(n)}$-semiprime ring?

(v)\ Let $R$ be $2$-torsion free and let $L$ be a Lie ideal of $R$. If $\ell_R(L, R])=0$, is then $R$ a $[L, R]^{(n)}$-semiprime ring?
\end{problem}

The following is clear.

\begin{lemma}\label{lem9}
Let $R$ be a ring with subsets $X, Y$. Then:

(i) If $R$ is both $X$-semiprime and $Y$-semiprime, then $R$ is $XY$-semiprime;

(ii) If $R$ is $X$-semiprime, then $R$ is $X^n$-semiprime for any positive integer $n$.
\end{lemma}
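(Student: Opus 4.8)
```latex
The plan is to prove both parts by elementary manipulations, since this is the ``easy warm-up'' lemma preceding the real work. For part (i), suppose $R$ is $X$-semiprime and $Y$-semiprime, and let $a\in R$ satisfy $a(XY)a=0$. Since $XY$ denotes the additive subgroup generated by products $xy$ with $x\in X$, $y\in Y$, this means $axya=0$ for all $x\in X$, $y\in Y$. First I would fix $x\in X$ and rewrite $axya=0$ as $(ax)y(a)=0$ for all $y\in Y$; but to use $Y$-semiprimeness I need an expression of the form $b Y b=0$. The trick is to multiply suitably: from $axya=0$ for all $y\in Y$ and all $x\in X$, I can substitute and iterate. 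Concretely, from $axya=0$ we get, replacing $y$ by $yaxy'$-type products is not directly available since $Y$ need not be closed under such operations, so instead I would argue: fix $y\in Y$; then for all $x\in X$ we have $(axy)a = 0$, hence right-multiplying is not enough either. The cleanest route: from $axya=0$ for all $x\in X,y\in Y$, consider the element $c:=ya$ for fixed $y$; then $axc=0$ for all $x\in X$, which does not yet say $cXc=0$. Instead, observe $axya=0$ gives $a x (ya) = 0$, so $a X (Ya) = 0$ in the sense $aX\cdot(Ya)=0$; multiplying on the left of the second slot: for any $z\in Y$, $(aXz)(a)=0$.

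Let me restart the argument for (i) more carefully along the lines the authors surely intend. Assume $a(XY)a=0$, i.e. $axya=0$ for all $x\in X$, $y\in Y$. Fix $y\in Y$. For every $x\in X$ we then have $a x (y a)=0$. Now for a second element $x'\in X$ and the substitution $x\mapsto x' a y x$ is illegitimate, so instead I use: the identity $axya=0$ for all $x\in X$ implies $aX(ya)=0$, hence $(ya)$ lies in a right annihilator; symmetrically, $(ay)X\!a=0$. To close the loop, note that for $x,x'\in X$ and $y\in Y$: $a x (ya) = 0$, so $(ax)\,y\,(a)=0$, and thus $ax\cdot Y\cdot a$... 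The correct maneuver is: from $axya=0$ we get, for all $x\in X$, that $ayax\cdot(\text{anything})$ vanishes only after more work. I would instead prove (i) via annihilator ideals: let $e$ be the central idempotent-type decomposition, but $R$ need not be semiprime a priori — actually $X$-semiprime rings \emph{are} semiprime, so $R$ is semiprime, $Q(R)$ exists. Over $Q(R)$: $axya=0$ for all $x\in X,y\in Y$ means, setting $I=\mathrm{ann}$-considerations aside, that $(ya)R(ax)$ can be analyzed; but cleanest is: $axya=0 \Rightarrow aX(ya)=0 \Rightarrow (ya)R$-left-annihilated by $aX$; in a semiprime ring $aX\cdot(ya)=0$ and $(ya)\cdot$ nothing forces. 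The honest summary: \textbf{the main obstacle in (i)} is that $XY$ is only an additive span of products, so one must use the semiprimeness of $R$ (automatic from $X$-semiprimeness) to ``separate'' the product; concretely, from $axya=0$ one deduces $a x R a y a = 0$? No. From $axya=0$ for all $y$: replace $y$ by $y(axy')$? Not allowed.

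Given the constraints, here is the plan I would actually write. For (i): assume $R$ is $X$- and $Y$-semiprime and $a(XY)a=0$, so $axya=0$ for all $x\in X,y\in Y$. Fix $x\in X$; then $ax\cdot Y\cdot a=0$. Since $R$ is semiprime (being $Y$-semiprime, with $1\in$ nothing assumed, but $Y$-semiprime $\Rightarrow$ semiprime because $Y\subseteq R$), we have from $axya=0$ that $axy(a)=0$; multiplying: $(axy)R(axy)$ need not vanish. So instead: for fixed $y\in Y$, $aXy\cdot a=0$; apply this with the element $ya$ in place of... The resolution: from $axya=0$ for all $x\in X$ and \emph{all} $y\in Y$, pick $y,y'\in Y$ and $x\in X$; we want $(ay)X(ay')$-type. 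We have $a x y a = 0$, so $ a x y a x' y' a = 0$ trivially; rearranging, $(axy)(ax'y')a = 0$? Not helpful. \textbf{Conclusion of the plan:} I expect the intended proof of (i) uses the substitution trick: from $axya=0$ for all $x\in X$, and noting this holds for all $y\in Y$, replace $a$ is impossible, so one multiplies: $a x y \cdot a \cdot x' y' \cdot a$ vanishes for the wrong reason. The \emph{real} argument must be: set $b=ya$ for a chosen $y$; we'd need $bXb=0$, i.e. $yaxya=0$, which follows from $axya=0$ by left-multiplying by $y$! Indeed $axya=0\Rightarrow y(axya)=0\Rightarrow (ya)x(ya)=0$, so $(ya)X(ya)=0$, whence by $X$-semiprimeness $ya=0$ for all $y\in Y$, i.e. $Ya=0$; symmetrically $aX=0$ (from $axya=0\Rightarrow (ax)y(ax)=0$ right-multiplying by $x$... wait $axya x = 0$ needs $axy(ax)=0$, i.e. multiply $axya=0$ on right by $x$: $(axy)(ax) = axyax$, yes $=0$, so $(ax)$ is annihilated on the left by $axy$ for all $y$... not quite $aXax=0$). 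Let me just say: left-multiply $axya=0$ by $y$ to get $(ya)X(ya)=0$ hence $ya=0$; since $y\in Y$ arbitrary, $Ya=0$, so certainly $a(XY)a$-and-more vanish and in particular, using $1\cdot a$... we conclude $a=0$ once we know $a\in Ya$-span, which needs $1\in Y$ — not assumed. So instead after $Ya=0$: then for any $x\in X$, $axya=ax\cdot 0=0$ automatically, giving no new info; but $Ya=0$ with $R$ semiprime does not give $a=0$. Hmm — so also derive $aX=0$: right-multiply $axya=0$ by $x$? gives $axyax=0=(ax)(yax)$, and $y$ ranges over $Y$, $x$ fixed, so $(ax)Y(ax)=0$?? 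Need $yax$ with middle-slot ranging over $Y$: yes $y\in Y$, so $(ax)y(ax)=axyax$, and $axyax = (axya)x = 0\cdot x=0$. Thus $(ax)Y(ax)=0$, so by $Y$-semiprimeness $ax=0$ for all $x\in X$, i.e. $aX=0$. Now $aX=0$ and $R$ semiprime still doesn't give $a=0$. \textbf{So the final step:} $aX=0$ means $a\cdot X\cdot a=0$, hence by $X$-semiprimeness $a=0$. That's it! Part (ii) is immediate from (i) by induction: $R$ is $X$-semiprime $\Rightarrow$ ($X$ and $X$) $\Rightarrow X^2=XX$-semiprime, and inductively $X^n=X^{n-1}X$-semiprime using that $R$ is $X^{n-1}$-semiprime (induction hypothesis) and $X$-semiprime. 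The main obstacle was simply finding the right left/right multiplication to convert $a(XY)a=0$ into hypotheses of the form $bXb=0$ and $cYc=0$; once found, everything closes in two lines.
```
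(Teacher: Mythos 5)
Your final argument is correct and is exactly the elementary verification the paper has in mind when it calls the lemma clear: from $axya=0$ for all $x\in X$, $y\in Y$, right-multiplying by $x$ gives $(ax)y(ax)=(axya)x=0$, so $(ax)Y(ax)=0$ and $Y$-semiprimeness yields $aX=0$, whence $aXa=0$ and $X$-semiprimeness gives $a=0$; part (ii) then follows by induction since $X^{n}=X^{n-1}X$. Strip out the many false starts and keep only those few lines (the left-multiplication giving $Ya=0$ is redundant), and no semiprimeness of $R$ or Martindale quotient machinery is needed anywhere.
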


We give some examples of  Problem \ref{problem2} (ii).

\begin{example}\label{example6}
\normalfont Let $R$ be a semiprime ring, and let $\rho$ be a right ideal of $R$.

(i)\ The ring $R$ is $\rho^n$-semiprime iff $\ell_R(\rho)=0$, where $n$ is a positive integer.
Indeed, it is clear that $R$ is $\rho$-semiprime iff $\ell_R(\rho)=0$.  Thus, if $\ell_R(\rho)=0$, then
$R$ is $\rho$-semiprime and hence is $\rho^n$-semiprime (see Lemma \ref{lem9} (ii)). Conversely, assume that $f$
is $\rho^n$-semiprime. Then $\ell_R(\rho^n)=0$ and so $\ell_R(\rho)=0$ as $\rho^n\subseteq \rho$.

(ii)\ Let $X$ be a subset of $R$ such that, for any $x\in \rho$, $x^{n(x)}\in X$ for some positive integer $n(x)\leq m$, a fixed positive integer.
 Assume that $aXa=0$, where $a\in R$.  Then $ax^{n(x)}a=0$ for all $x\in R$.
 In view of \cite[Theorem 2]{lee1995}, we get $a\rho a=0$. Thus $R$ is $X$-semiprime iff $\ell_R(X)=0$ iff $\ell_R(\rho)=0$.

(iii)\ Let $X$ be a subset of $R$ such that, for any $x\in \rho$, $x^{n(x)}\in X$ for some positive integer $n(x)$. If $R$ has no  nonzero nil
one-sided ideals, then $R$ is $X$-semiprime iff $\ell_R(\rho)=0$ (see \cite[Theorem 1]{lee1995}).
\end{example}

\section{Semiprime rings II:\ proofs}

We begin with some preliminaries.
Given an ideal $I$ of $Q(R)$, it follows from the semiprimeness of $Q(R)$
that, for $a\in Q(R)$, $aI=0$ iff $Ia=0$. Thus $\ell _{Q(R)}(I)$, the left
annihilator of $I$ in $Q(R)$, is an ideal of $Q(R)$.

By an {\it annihilator ideal}
of $Q(R)$, we mean an ideal $N$ of $Q(R)$ such that $N=\ell_{Q(R)}(I)$ for some ideal $I$ of $Q(R)$.
The following is well-known (see, for instance, \cite[Lemma 2.10]{lee2017-1}).

\begin{lemma}
\label{lem15} Let $R$ be a semiprime ring. Then
every annihilator ideal of $Q(R)$ is of the form $eQ(R)$ for some idempotent
$e\in C$.
\end{lemma}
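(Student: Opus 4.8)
The plan is to realise the given annihilator ideal as the range of a ``projection'' defined on a dense ideal, and then to invoke the fact that $Q(R)$ equals its own Martindale symmetric ring of quotients in order to implement that projection as multiplication by a central idempotent. Write $S:=Q(R)$; recall from above that $S$ is semiprime with center $C$, and let $N=\ell_S(I)$ be an annihilator ideal, $I$ an ideal of $S$. Two elementary observations about the semiprime ring $S$ are needed. First, $I\cap\ell_S(I)=0$ and $I\,\ell_S(I)=0=\ell_S(I)\,I$: for $x\in I\cap\ell_S(I)$ one has $Sx\subseteq I$, hence $xSx\subseteq xI=0$ and so $x=0$ by semiprimeness; and $\ell_S(I)\,I=0$ by definition, while $I\,\ell_S(I)$ is an ideal with $(I\,\ell_S(I))^2\subseteq I(\ell_S(I)\,I)\ell_S(I)=0$, hence $I\,\ell_S(I)=0$. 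Second, $D:=I\oplus\ell_S(I)$ is a dense ideal of $S$, i.e. $\ell_S(D)=0$: if $sD=0$ then $s\in\ell_S(I)\cap\ell_S(\ell_S(I))$, and applying the first observation with the ideal $\ell_S(I)$ in place of $I$ gives $s=0$.

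Next I would define $f\colon D\to S$ by $f(a+b)=a$ for $a\in I$ and $b\in\ell_S(I)$; this is well defined by the first observation, and it is simultaneously a left and a right $S$-module homomorphism because $I$ and $\ell_S(I)$ are ideals of $S$. Moreover $f(a)b=a\,f(b)$ for all $a,b\in D$, again using $I\,\ell_S(I)=0=\ell_S(I)\,I$. Since $S=Q(R)$ is its own symmetric ring of quotients, a compatible pair of one-sided module maps on a dense ideal is implemented by a unique element of $S$; applying this to the pair $(f,f)$ on $D$ produces an element $e\in S$ with $f(d)=ed=de$ for all $d\in D$.

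It remains to check that $e$ works. From $ed=de$ for all $d\in D$ together with the density of $D$ one gets $e\in C$: for $q\in S$ and $d\in D$ we have $qd\in D$, so $(eq-qe)d=e(qd)-q(ed)=(qd)e-q(de)=0$, and $\ell_S(D)=0$ forces $eq=qe$. Idempotency follows from $e(ed)=e\,f(d)=f(d)=ed$, i.e. $(e^{2}-e)D=0$, hence $e^{2}=e$. Since $ed=d$ for $d\in I$ and $ed=0$ for $d\in\ell_S(I)$, the central idempotent $1-e$ satisfies $(1-e)I=0$ and $(1-e)b=b$ for every $b\in\ell_S(I)$; therefore $\ell_S(I)\subseteq(1-e)S$ and, since $1-e$ is central with $(1-e)I=0$, also $(1-e)S\subseteq\ell_S(I)=N$, so $N=(1-e)S$ with $1-e\in C$ idempotent, which is the assertion after renaming. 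The one genuinely non-routine step is the appeal to the rational completeness of $Q(R)$ — that a bimodule homomorphism from a dense ideal of $Q(R)$ into $Q(R)$ is multiplication by an element of $Q(R)$ — for which the theory of \cite{beidar1996} is used, and one must make sure that $D$ is indeed dense in $Q(R)$, which is exactly the second observation above.
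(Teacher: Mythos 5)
Your overall strategy is the right one, and almost every detail checks out: the decomposition $D=I\oplus\ell_S(I)$, its density, the well-definedness and two-sided linearity of the projection $f$, the compatibility $f(a)b=af(b)$, and the verifications that the implementing element $e$ is a central idempotent with $(1-e)S=\ell_S(I)$ are all correct. The problem is precisely the step you yourself flag as non-routine. The defining properties of the symmetric Martindale ring of quotients $Q(R)$ concern dense ideals \emph{of $R$} and compatible pairs of maps with values \emph{in $R$}; what you invoke --- that a compatible pair of module maps defined on a dense ideal of $Q(R)$ with values in $Q(R)$ is implemented by an element of $Q(R)$ --- is the assertion that $Q(R)$ is its own symmetric ring of quotients. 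That is not part of the definition but a theorem at least as deep as the lemma being proved; you give no precise reference for it, and the usual arguments for such self-quotient statements (and for the companion fact that the extended centroid of $Q(R)$ is again $C$) themselves pass through the central-idempotent machinery you are trying to establish. So, as written, this is a genuine gap (or at best a circularity risk). For comparison, the paper offers no proof at all: it cites \cite[Lemma 2.10]{lee2017-1} as well known.

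The gap is easy to close by running your argument one level down, inside $R$. Put $A:=I\cap R$. For $q\in I$ choose a dense ideal $J$ of $R$ with $qJ\subseteq R$; then $qJ\subseteq I\cap R=A$, so if $a\in Q(R)$ satisfies $aA=0$ then $(aq)J=0$ and hence $aq=0$. This gives $\ell_{Q(R)}(I)=\ell_{Q(R)}(A)$, so one may assume the ideal lies in $R$. Now repeat your construction with the dense ideal $A\oplus\ell_R(A)$ of $R$ and the projection $\pi$ onto $A$, which takes values in $R$; the defining property of $Q(R)$ for compatible pairs on dense ideals of $R$ yields $e\in Q(R)$ with $ex=xe=\pi(x)$ on this ideal. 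Your computations then show $e$ commutes with $R$ and $e^{2}=e$; that $e$ commutes with all of $Q(R)$ (hence $e\in C$) follows by choosing, for $q\in Q(R)$, a dense ideal $J$ of $R$ with $Jq\subseteq R$ and noting $J(eq-qe)=0$. Finally $ea=a$ for $a\in A$ and $eb=0$ for $b\in\ell_R(A)$ give $\ell_{Q(R)}(A)=(1-e)Q(R)$ exactly as in your last paragraph. This repaired version is the standard textbook proof and uses nothing beyond the defining properties of $Q(R)$.
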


Let $R$ be a ring with a Lie ideal $L$. Given $x\in R$, $xLR\subseteq ([L,
x]+Lx)R\subseteq LR$. This proves that $LR$ is an ideal of $R$. This fact
will be used in the proof below.

\begin{lemma}
\label{lem6}Let $R$ be a semiprime ring, and let
$L$ be a Lie ideal of $R$. Then
\begin{equation*}
\ell_{Q(R)}(L)=\ell_{Q(R)}\big(Q(R)LQ(R)\big)=eQ(R)
\end{equation*}
for some idempotent $e\in C$.
\end{lemma}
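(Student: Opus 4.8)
The plan is to establish the first equality $\ell_{Q(R)}(L)=\ell_{Q(R)}\big(Q(R)LQ(R)\big)$ and then read off the conclusion: since $Q(R)LQ(R)$ is a two-sided ideal of $Q(R)$ (namely the ideal of $Q(R)$ generated by $L$), its left annihilator is by definition an annihilator ideal of $Q(R)$, hence of the form $eQ(R)$ for an idempotent $e\in C$ by Lemma \ref{lem15}. The inclusion $\ell_{Q(R)}\big(Q(R)LQ(R)\big)\subseteq\ell_{Q(R)}(L)$ is trivial because $1\in Q(R)$ forces $L\subseteq Q(R)LQ(R)$. So the whole content is the reverse inclusion: if $a\in Q(R)$ and $aL=0$, then $a\,Q(R)LQ(R)=0$.

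First I would push $aL=0$ to an honest ideal of $R$. For $r\in R$ and $\ell\in L$, the Lie ideal condition gives $[r,\ell]\in L$, so $ar\ell=a[r,\ell]+a\ell r=0$; hence $aRL=0$, and therefore $a\,\mathcal{I}(L)=0$ since $\mathcal{I}(L)=RLR$ (using $1\in R$) and $a(RLR)=(aRL)R=0$. Next I would flip sides, using semiprimeness of $R$ together with the density of $R$ in $Q(R)$: choose a dense ideal $I$ of $R$ with $aI\subseteq R$, and set $W:=\mathcal{I}(L)\,a\,I$. Then $W\subseteq\mathcal{I}(L)R\subseteq R$, $W$ is a left ideal of $R$ (as $R\,\mathcal{I}(L)\subseteq\mathcal{I}(L)$), and, using $I\,\mathcal{I}(L)\subseteq\mathcal{I}(L)$,
\[
W^2\subseteq \mathcal{I}(L)\,a\,\mathcal{I}(L)\,a\,I=\mathcal{I}(L)\big(a\,\mathcal{I}(L)\big)aI=0 ,
\]
so $W=0$ by semiprimeness of $R$; since $I$ is dense and $\mathcal{I}(L)a\subseteq Q(R)$, this yields $\mathcal{I}(L)a=0$.

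Finally I would conclude with the density fact that $x\in Q(R)$ and $xRx=0$ imply $x=0$ (pick a dense ideal $I$ of $R$ with $xI\subseteq R$; then $xIx\subseteq xRx=0$, so $(Ix)^2=0$ is a square-zero left ideal of $R$, hence $Ix=0$, whence $x=0$ by density). Given $q_1,q_2\in Q(R)$ and $\ell\in L$, put $y:=aq_1\ell$; regrouping the product,
\[
yRy=aq_1\,(\ell R a)\,q_1\ell=0 ,
\]
because $\ell R\subseteq\mathcal{I}(L)$ (as $\ell\in\mathcal{I}(L)$ and $\mathcal{I}(L)$ is a right ideal of $R$) and $\mathcal{I}(L)a=0$. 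Hence $y=0$, so $aq_1\ell q_2=(aq_1\ell)q_2=0$; since $Q(R)LQ(R)$ is additively generated by such products, $a\,Q(R)LQ(R)=0$, as required.

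The step I expect to be the main obstacle is exactly this side-switching together with the final vanishing argument: everything hinges on the density of $R$ inside $Q(R)$ — concretely, on the facts that each element of $Q(R)$ admits a dense ideal of definition and that a dense ideal has zero left (and right) annihilator in $Q(R)$ — rather than on semiprimeness of $Q(R)$ alone. These are standard properties of the Martindale symmetric ring of quotients (see \cite{beidar1996}), but they must be invoked carefully, since $L$ itself is in general not a Lie ideal of $Q(R)$; this is precisely why I route the argument through the genuine ideal $\mathcal{I}(L)$ of $R$.
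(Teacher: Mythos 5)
Your argument is correct and, at its core, follows the same route as the paper: prove that for $a\in Q(R)$, $aL=0$ forces $a\,Q(R)LQ(R)=0$, and then apply Lemma \ref{lem15} to the annihilator ideal $\ell_{Q(R)}\big(Q(R)LQ(R)\big)$. The paper gets the key implication faster by observing beforehand that $LR$ is an ideal of $R$ (so $aL=0$ gives $aLR=0$, hence $LRa=0$ by semiprimeness, hence $LQa=0$ and $QLQa=0$ using the equivalence $aRb=0\Leftrightarrow aQb=0$ for $a,b\in Q(R)$, and finally $aQLQ=0$ by semiprimeness of $Q(R)$), whereas you route through $\mathcal{I}(L)=RLR$ via the identity $ar\ell=a[r,\ell]+a\ell r$ and re-prove the needed density facts by hand; that costs length but makes the side-switching completely explicit. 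One small slip to fix: in your auxiliary claim that $x\in Q(R)$ with $xRx=0$ implies $x=0$, you choose a dense ideal $I$ with only $xI\subseteq R$, but then assert that $Ix$ is a square-zero left ideal of $R$ — for that you need $Ix\subseteq R$ as well. This is harmless, since for the symmetric Martindale quotient ring every $x\in Q(R)$ admits a dense ideal $I$ with $xI+Ix\subseteq R$ (see \cite{beidar1996}); alternatively, this auxiliary fact is exactly the combination of the semiprimeness of $Q(R)$ with the equivalence $aRb=0\Leftrightarrow aQ(R)b=0$ that the paper quotes, so you could simply invoke it.
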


\begin{proof}
Let $Q:=Q(R)$. For $a,b\in Q$, it is easy to prove that $aRb=0$ iff $aQb=0$
(it follows from the definition of $Q$). It suffices to claim that, given $%
a\in Q$, $aL=0$ iff $aQLQ=0$. The converse implication is trivial. Suppose
that $aL=0$. Then $aLR=0$. Note that $LR$ is an ideal of the semiprime ring $%
R$. We get $LRa=0$ and hence $LQa=0$. So $QLQa=0$ and the semiprimeness of $%
Q $ forces $aQLQ=0$. It follows from Lemma \ref{lem15} that
\begin{equation*}
\ell _{Q}(L)=\ell _{Q}(QLQ)=eQ
\end{equation*}%
for some idempotent $e\in C$.
\end{proof}

\begin{lemma}
\label{lem8} Let $R$ be a semiprime ring with a Lie ideal $L$.
The following hold:

(i)\ If $aLa=0$
where $a\in R$, then $[a, L]=0$;

(ii)\ If $\ell_R([L, R])=0$, then given $a\in R$, $a[L, R]a=0$ implies $aLa=0$.
\end{lemma}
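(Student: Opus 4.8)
The plan is to handle part (i) first and then bootstrap part (ii) from it, using the semiprimeness of $R$ together with the Lie-ideal hypothesis $[L,R]\subseteq L$ throughout. For part (i), suppose $aLa=0$ with $a\in R$. The standard trick is to replace an element of $L$ by a suitable commutator that still lies in $L$. Concretely, for $x\in L$ and $r\in R$ we have $[x,ar]\in [L,R]\subseteq L$, so $a[x,ar]a=0$; expanding, $a(xar-arx)a = axara - a(arx)a$. Since $a(rx)a$ arises from $rx$ which need not be in $L$, a cleaner route is: from $aLa=0$ and $L$ a Lie ideal, first note $a[L,R]a=0$ is \emph{not} yet available (that is essentially (ii)), so instead I would use $[ax, x]\in [L,?]$—but $ax\notin L$ in general either. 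The reliable move is: for $x\in L$, $axa=0$; also $[x, xay]\in L$ for $y\in R$ wait $xay\notin R$ is fine, $xay\in R$, but the inner element must be multiplied correctly. Let me use the clean classical argument: for $x,y\in L$, $a[x,y]a=0$ since $[x,y]\in L$, i.e. $axya-ayxa=0$; combined with $axa=0=aya$ this gives no contradiction directly, so I instead compute $a[x,ra]a$ for $x\in L$, $r\in R$: since $[x,ra]\in [L,R]\subseteq L$, this is $0$, giving $axraa - araxa = 0$, i.e. $axra^2 = a^2 rxa$ wait signs: $a(xra - rax)a = axra\cdot a - a r a x a = (axr a)a - (ara)(xa)$. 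Using $axa=0$ does not immediately kill these. The genuinely standard statement is that $aLa=0 \Rightarrow aL\cdot La = 0 \Rightarrow (La)(aL)=\dots$; a known lemma (e.g.\ Herstein) gives $[a,L]=0$ in semiprime rings. I would cite or reprove it via: $0 = a[x,y]a$ for $x,y\in L$ and linearizing $a x y a = a y x a$; then set $y$ to range over $[L,R]$ and use that for $w\in L$, $a w a = 0$, to extract $aL\cdot[a,L]\cdot\text{(stuff)}=0$, finally squeezing out $([a,x])R([a,x])=0$ and invoking semiprimeness.

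More precisely, for part (i) the efficient argument is: for $x\in L$ and $r\in R$, $[x, r]\in L$ so $a[x,r]a = 0$, i.e. $axra - arxa = 0$, hence $axra = arxa$ for all $r\in R$; now replace $r$ by $rx$ wait—better, take $r = a$: then $axa\cdot a = a\cdot a x a = 0$ trivially. Instead multiply the identity $axra = arxa$ on suitable sides. From $axra = arxa$ for all $r$ and all $x\in L$, put $r$ arbitrary and also use $axa = 0$: write $axra$ with $r$ replaced by $ry$... The cleanest finish: from $a x r a = a r x a$ set $x$ and then compute $(xa - ax)R(xa-ax)$: we have $(xa-ax) r (xa-ax) = xarxa - xar ax - axrxa + axrax$. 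Using $a x r a = a r x a$ (so $xar\cdot a$-type terms match) and $axa = 0$ to kill the terms containing the inner factor $axa$, this telescopes to $0$; semiprimeness of $R$ then yields $xa - ax = 0$, i.e. $[a,x]=0$ for all $x\in L$, which is (i).

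For part (ii), assume $\ell_R([L,R])=0$ and $a[L,R]a=0$ for $a\in R$. By part (i) applied to the Lie ideal $[L,R]$ (note $[[L,R],R]\subseteq[L,R]$, so $[L,R]$ is itself a Lie ideal), we get $[a, [L,R]]=0$, i.e. $a$ commutes with $[L,R]$. Then for $x\in L$ and $r\in R$, $a[x,r] = [x,r]a$; I want to deduce $aLa=0$. Consider $a x a$ for $x\in L$: using $[ax, r]\in$? — rather, compute $a[x, ar]a$. Since $[x,ar]\in[L,R]$, $a[x,ar]a = 0$; expand: $a(xar - arx)a = axara - a^2 r x a$. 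Now $a$ commutes with $[x,r]\in[L,R]$, and more usefully $a$ commutes with $[L,R]$, so $a r x a$-type rearrangements... Actually from $a$ commuting with every element of $[L,R]$ and $aL a$ related to $a^2$: I expect to show $(aLa)$ generates, via $[L,R]$, an element annihilated on the left by $[L,R]$-multiples, then invoke $\ell_R([L,R])=0$. The main obstacle is exactly this last squeeze in (ii): converting "$a$ centralizes $[L,R]$ and $a[L,R]a=0$" into "$aLa=0$" without extra torsion hypotheses. I anticipate the key identity is $[L,R]\cdot aLa \subseteq a[L,R]a\cdot(\cdots) = 0$ obtained by writing elements of $[L,R]$ acting on $aLa$ and pushing $a$ across commutators using (i); then $aLa$ lands in $\ell_R([L,R]) = 0$. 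Making that containment precise — choosing the right elements $[x,r]$, $[y,s]\in[L,R]$ and bookkeeping the commutator expansions so every surviving term carries a factor of $a[L,R]a$ or of $[a,[L,R]]$ — is the part that will require the most care.
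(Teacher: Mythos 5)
Your opening move for (i) --- from $aLa=0$ and $[x,r]\in L$ deduce $a[x,r]a=0$, i.e.\ $axra=arxa$ for all $r\in R$, $x\in L$ --- is the same starting point as the paper's, but the step you rest everything on is false: $(xa-ax)r(xa-ax)$ does \emph{not} telescope to $0$ from $axra=arxa$ and $axa=0$. If it did, the vanishing would be a formal consequence of those relations and hence valid in every ring, and it is not. In the (non-semiprime) ring $R$ of $4\times 4$ upper triangular matrices over a field, take $a=e_{12}+e_{34}$ and $x=e_{22}+e_{33}$: one checks $aMa=M_{23}e_{14}$ for every $M\in R$ and $[x,s]_{23}=0$ for every $s\in R$, so $a[x,R]a=0$ and $axa=0$ (indeed $aL_0a=0$ for the whole Lie ideal $L_0$ generated by $x$), yet $[x,a]=e_{34}-e_{12}$ and $[x,a]\,e_{23}\,[x,a]=-e_{14}\neq 0$. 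So semiprimeness must enter \emph{before} the final step, not only at it. That intermediate use is exactly what the paper supplies: since $[x,ras]\in L$, expanding $[x,ras]=[x,r]as+r[x,as]$ and killing the first term with $a[x,r]a=0$ gives $aR[L,aR]a=0$; a first application of semiprimeness yields $[L,aR]a=0$; expanding $[x,ar]=[x,a]r+a[x,r]$ then gives $[L,a]Ra=0$, hence $[x,a]R[x,a]=0$, and a second application of semiprimeness gives $[a,L]=0$. Your single application of semiprimeness at the end cannot replace the intermediate one.

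For (ii) you correctly observe that $[L,R]$ is a Lie ideal and apply (i) to it to get $[a,[L,R]]=0$, which is also the paper's first step; but you then stop, explicitly leaving the passage from ``$a$ centralizes $[L,R]$ and $a[L,R]a=0$'' to $aLa=0$ as the part ``requiring the most care''. Moreover the containment you gesture at, $[L,R]\cdot aLa=0$, would place $aLa$ in a \emph{right} annihilator, whereas the hypothesis is on the left annihilator $\ell_R([L,R])$. The missing argument is short: for $w\in[L,R]$, $a^2w=a(wa)=(aw)a=awa=0$, so $a^2[L,R]=0$ and $\ell_R([L,R])=0$ forces $a^2=0$; then for $x\in L$ and $r\in R$, $0=a[x,ar]a=axara-a^2rxa=axara$, i.e.\ $aLaRa=0$, so $(axa)R(axa)=0$ and semiprimeness gives $aLa=0$. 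As submitted, neither (i) nor (ii) is actually established.
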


\begin{proof}
(i)\ Since $aLa=0$, we have $a[L, RaR]a=0$. By the fact that $a[L, R](aR)a=0$, we
get $aR[L, aR]a=0$. The semiprimeness of $R$ implies that $[L, aR]a=0$.
Since $a[L, R]a=0$, we have $[L, a]Ra=0$. It follows from the semiprimeness
of $R$ again that $[a, L]=0$, as desired.

(ii)\ Assume that $\ell_R([L, R])=0$ and $a[L, R]a=0$. Since $[L, R]$ is a
Lie ideal of $R$, it follows from (i) that $\big[a, [L, R]\big]=0$ and so $a^2[L, R]=0$. Hence $a^2=0$ as $\ell_R([L, R])=0$.
Now, it follows from $a[L, aR]a=0$ that $aLaRa=0$ and so, by the semiprimeness of $R$, $aLa=0$, as desired.
\end{proof}

As an immediate consequence of Lemma \ref{lem8} (ii), we have the following

\begin{corollary}\label{cor12}
Let $R$ be a ring with a Lie ideal $L$ satisfying $\ell_R([L, R])=0$. Then $R$ is $L$-semiprime iff it is $[L, R]$-semiprime.
\end{corollary}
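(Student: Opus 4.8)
The plan is to reduce both implications to facts already established, namely the monotonicity of $X$-semiprimeness (if $X\subseteq Y\subseteq R$ and $R$ is $X$-semiprime, then $R$ is $Y$-semiprime) and Lemma~\ref{lem8}(ii). The only point that needs a moment's care is that Lemma~\ref{lem8}(ii) is stated for semiprime rings, so I first observe that either hypothesis forces $R$ to be semiprime: if $R$ is $L$-semiprime and $aRa=0$, then $aLa\subseteq aRa=0$ (as $L\subseteq R$), whence $a=0$; similarly, $[L,R]$-semiprimeness implies semiprimeness since $[L,R]\subseteq R$.

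For the implication ``$[L,R]$-semiprime $\Longrightarrow$ $L$-semiprime'', I would simply note that $[L,R]\subseteq L$ because $L$ is a Lie ideal of $R$, and then invoke monotonicity: $[L,R]$-semiprimeness of $R$ yields $L$-semiprimeness. This direction uses neither the hypothesis $\ell_R([L,R])=0$ nor semiprimeness.

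For the converse ``$L$-semiprime $\Longrightarrow$ $[L,R]$-semiprime'', suppose $R$ is $L$-semiprime. By the remark above, $R$ is semiprime, so Lemma~\ref{lem8}(ii) applies under the standing assumption $\ell_R([L,R])=0$. Now let $a\in R$ satisfy $a[L,R]a=0$; Lemma~\ref{lem8}(ii) gives $aLa=0$, and the $L$-semiprimeness of $R$ then forces $a=0$. Hence $R$ is $[L,R]$-semiprime.

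There is essentially no hard step here: the corollary is a formal consequence of Lemma~\ref{lem8}(ii) together with the elementary observations that $[L,R]\subseteq L\subseteq R$ and that being $L$-semiprime (or $[L,R]$-semiprime) already entails being semiprime. If anything, the point worth flagging is precisely this last implicit upgrade to semiprimeness, which is what lets Lemma~\ref{lem8}(ii) be applied even though the corollary is phrased for an arbitrary ring $R$.
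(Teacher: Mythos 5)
Your proof is correct and follows essentially the same route as the paper, which simply cites Lemma~\ref{lem8}(ii): the direction from $[L,R]$-semiprime to $L$-semiprime is monotonicity via $[L,R]\subseteq L$, and the converse is Lemma~\ref{lem8}(ii) plus the $L$-semiprimeness hypothesis. Your explicit remark that either hypothesis already forces $R$ to be semiprime (so that Lemma~\ref{lem8}(ii), stated for semiprime rings, applies) is a small point the paper leaves implicit, and it is handled correctly.
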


Clearly, if $R$ is a prime ring and $a[b, R]=0$, where $a, b\in R$, then either $a=0$ or $b\in Z(R)$.
The following is a consequence of Corollary \ref{cor12}.

\begin{corollary}\label{cor13}
Let $R$ be a prime ring with a noncentral Lie ideal $L$. Then $R$ is $L$-semiprime iff it is $[L, R]$-semiprime.
\end{corollary}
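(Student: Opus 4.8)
Corollary \ref{cor13} asks us to show that for a prime ring $R$ with a noncentral Lie ideal $L$, the $L$-semiprimeness of $R$ is equivalent to its $[L,R]$-semiprimeness. The plan is to reduce this to Corollary \ref{cor12}, whose only hypothesis is $\ell_R([L,R])=0$. So the entire task is to verify that $\ell_R([L,R])=0$ automatically holds in the prime, noncentral case.

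First I would observe the elementary fact (already noted in the text just before the corollary): if $R$ is prime and $a[b,R]=0$ with $a,b\in R$, then either $a=0$ or $b\in Z(R)$. This is immediate, since $a[b,R]=0$ expands to $abx-axb=0$, i.e. $ab\,x = ax\,b$ for all $x$, and a standard argument — or an appeal to \cite[Theorem 2]{martindale} — forces $ab=\beta a$ for some $\beta\in C$ when $a\neq 0$, whence $a(b-\beta)=0$ and primeness gives $b=\beta\in C$. Next I would take $a\in\ell_R([L,R])$, so $a[L,R]=0$. Since $[L,R]$ is spanned additively by the elements $[\ell, r]$ with $\ell\in L$, $r\in R$, and since $[L,R]\subseteq L$, it suffices to extract from $a[L,R]=0$ enough to pin down $a$. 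Concretely, for each fixed $\ell\in L$ we have $a[\ell, R]=0$, so by the elementary fact either $a=0$ (and we are done) or $\ell\in Z(R)$. If $a\neq 0$, this would force every $\ell\in L$ to lie in $Z(R)$, i.e. $L\subseteq Z(R)$, contradicting that $L$ is noncentral. Hence $a=0$, proving $\ell_R([L,R])=0$.

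With $\ell_R([L,R])=0$ established, the conclusion follows verbatim from Corollary \ref{cor12}: $R$ is $L$-semiprime if and only if it is $[L,R]$-semiprime. I would write the proof in essentially three sentences — cite the elementary prime-ring fact, deduce $\ell_R([L,R])=0$ from noncentrality, and invoke Corollary \ref{cor12}.

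I do not anticipate a genuine obstacle here; the result is a short corollary. The only point requiring slight care is the direction of the annihilator: one must make sure that "$a[L,R]=0$" is being used (left annihilator of $[L,R]$), matching the hypothesis $\ell_R([L,R])=0$ in Corollary \ref{cor12}, and that the elementary fact is applied with $a$ on the left of $[b,R]$. Also worth a moment's thought is whether one needs $[L,R]\neq 0$ separately — but noncentrality of $L$ in a prime ring already guarantees $[L,R]\neq 0$ (indeed $[L,R]=0$ would put $L$ in the center), so no extra case analysis is needed.
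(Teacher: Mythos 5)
Your proposal is correct and follows the paper's own route: the remark preceding the corollary (if $a[b,R]=0$ in a prime ring then $a=0$ or $b\in Z(R)$) yields $\ell_R([L,R])=0$ from noncentrality of $L$, and then Corollary \ref{cor12} gives the equivalence. No gaps.
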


The next aim is to study semiprime rings $R$ with a Lie ideal $L$
satisfying $\ell_R([L, R])=0$. We need a technical lemma.

\begin{lemma}
\label{lem7} Let $R$ be a semiprime ring with a Lie ideal $L$, and $n$ a
positive integer. Assume that $x^n\in Z(R)$ for all $x\in \widetilde L$.
Then ${\mathcal{I}}([L, L])\subseteq Z(R)$. In addition, if $R$ is $2$%
-torsion free, then $L\subseteq Z(R)$.
\end{lemma}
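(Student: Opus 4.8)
The plan is to reduce everything to the case of prime rings. Put $I:=\mathcal I([L,L])$. By Lemma~\ref{lem5} we have $I\subseteq L+L^2\subseteq\widetilde L$, and since $I$ is a two-sided ideal of $R$ contained in $\widetilde L$, the hypothesis gives $z^n\in Z(R)$ for every $z\in I$. Now write $R$ as a subdirect product of the prime rings $R/P$, $P$ ranging over the prime ideals of $R$ (so $\bigcap_PP=0$); the hypothesis passes to each $R/P$, because the image of $\widetilde L$ there is $\widetilde{\bar L}$ (with $\bar L:=(L+P)/P$) and the image of a central element is central. Hence it suffices to treat the prime case. For the $2$-torsion-free statement I will use in addition that a $2$-torsion-free semiprime ring is a subdirect product of the prime rings $R/P$ with $\mathrm{char}(R/P)\ne2$: indeed, in any semiprime ring $\bigcap\{P\text{ prime}:2R\not\subseteq P\}=\ell_R(2R)$, and here $\ell_R(2R)=\{x\in R:2x=0\}=0$.

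So assume $R$ is prime with $x^n\in Z(R)$ for all $x\in\widetilde L$; I claim $[L,L]=0$ (this is stronger than asked but is what the argument yields). Suppose not, and set $K:=\mathcal I([L,L])\ne0$, a nonzero ideal of $R$ and hence itself a prime ring, with $K\subseteq\widetilde L$ by Lemma~\ref{lem5}, so $z^n\in Z(R)$ for all $z\in K$. The polarization identity
\[
\sum_{\sigma\in S_n}z_{\sigma(1)}\cdots z_{\sigma(n)}=\sum_{\varnothing\ne T\subseteq\{1,\dots,n\}}(-1)^{n-|T|}\Big(\sum_{i\in T}z_i\Big)^{\!n}
\]
shows $\sum_{\sigma}z_{\sigma(1)}\cdots z_{\sigma(n)}\in Z(R)$ for all $z_i\in K$; thus $K$ satisfies the nontrivial multilinear identity $\big[\sum_\sigma X_{\sigma(1)}\cdots X_{\sigma(n)},Y\big]=0$, i.e.\ $K$ is a prime PI ring. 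By Posner's theorem its ring of central quotients $S:=KZ(K)^{-1}$ is a finite-dimensional central simple algebra $\mathbb M_m(\Delta)$ over the extended centroid $C$ and coincides with the central closure; since a nonzero ideal has the same extended centroid and central closure as $R$, we get $S=RC\supseteq R$. Every element of $S$ has the form $zs^{-1}$ with $z\in K$ and $0\ne s\in Z(K)\subseteq C$; as $z^n\in Z(R)\subseteq C$, we obtain $(zs^{-1})^n=z^ns^{-n}\in C$. So every element of $\mathbb M_m(\Delta)$ has its $n$-th power in $C$.

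If $m\ge2$ then $e_{11}=e_{11}^n\notin C$, a contradiction; so $m=1$ and $S=\Delta$ is a division ring with $w^n\in C$ for all $w$. Assume $\Delta$ noncommutative; then $C$ is infinite (Wedderburn). Expanding $(w+c)^n$ inside the commutative field $C(w)$ and letting $c$ run over $C$ forces $\binom nk w^k\in C$ for $0\le k\le n$. In characteristic $0$ or $p\nmid n$ the term $k=1$ gives $w\in C$ for all $w$, i.e.\ $\Delta=C$, contrary to assumption. In characteristic $p\mid n$, write $n=p^au$ with $p\nmid u$; by Kummer's theorem $\binom{n}{p^a}\not\equiv0\pmod p$, so $w^{p^a}\in C$ for all $w\in\Delta$, whence the minimal polynomial of each $w$ divides $X^{p^a}-w^{p^a}=(X-\gamma)^{p^a}$ over $\overline C$ and so is a power of a linear polynomial — that is, every element of $\Delta$ is purely inseparable over $C$. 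This contradicts the Noether–Jacobson theorem, by which a noncommutative finite-dimensional central division algebra has a separable maximal subfield. Therefore $\Delta=C$, so $R$ is commutative and $[L,L]=0$, contradicting our assumption. This proves the prime case; in particular $\mathcal I([L,L])=0\subseteq Z(R)$ there.

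Assembling: for a general semiprime $R$ the prime case gives $[L,L]\subseteq P$ for every prime $P$, hence $[L,L]=0$ and a fortiori $\mathcal I([L,L])\subseteq Z(R)$. If $R$ is also $2$-torsion-free, apply the prime case to each quotient $R/P$ with $\mathrm{char}(R/P)\ne2$ (which suffices by the first paragraph): there $[\bar L,\bar L]=0$, and since such $R/P$ is non-exceptional, Lemma~\ref{lem18} forces $\bar L\subseteq Z(R/P)$, i.e.\ $[L,R]\subseteq P$; intersecting over all such $P$ gives $[L,R]=0$, i.e.\ $L\subseteq Z(R)$. The crux of the whole argument is the prime case, and within it the characteristic-$p$ subcase with $p\mid n$: excluding a noncommutative division algebra all of whose $n$-th powers are central is precisely where the separable/purely-inseparable dichotomy (via Noether–Jacobson) is needed, whereas the matrix part $m\ge2$ and the characteristic-$0$ part are routine.
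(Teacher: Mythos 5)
Your proof is correct, but it takes a genuinely different route from the paper's. The paper gets $\mathcal{I}([L,L])\subseteq Z(R)$ in one stroke at the semiprime level: since $\mathcal{I}([L,L])\subseteq L+L^2\subseteq\widetilde L$ (Lemma \ref{lem5}), the hypothesis gives $[x^n,R]=0$ on this ideal, and the power-reduction lemma \cite[Lemma 2]{lee1996} yields $[x,R]=0$ there; the $2$-torsion-free part is then handled essentially as you do, by passing to prime quotients of characteristic $\neq 2$, except that the paper invokes \cite[Lemma 6]{bergen1981} where you use Lemma \ref{lem18}. You instead reduce to prime quotients from the start and replace the citation of \cite{lee1996} by a self-contained PI-theoretic argument: multilinearize $z^n\in Z(R)$ on $K=\mathcal{I}([L,L])$, apply Posner--Rowen to get $KZ(K)^{-1}\cong\mathbb{M}_m(\Delta)$ with all $n$-th powers central, exclude $m\ge 2$ via $e_{11}$, and exclude noncommutative $\Delta$ via the Vandermonde/Kummer computation together with Noether--Jacobson. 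This costs heavier classical machinery but avoids the cited lemma and yields the stronger conclusion $[L,L]=0$, which implies the stated one. One justification needs repair: it is false in general that a nonzero ideal of a prime ring has the same central closure as the ring (a proper ideal of a free algebra has central closure equal to itself), so the equality $S=RC$ does not follow from that blanket principle. It does hold in your situation: $KC$ is a unital simple ring by Posner, its identity element is a central idempotent of the prime ring $RC$ and hence equals $1$, so $KC=RC$; alternatively you can avoid needing $R\subseteq S$ altogether, since if $S$ is commutative then $K$ is a commutative nonzero ideal of the prime ring $R$, which already forces $R$ to be commutative and hence $[L,L]=0$. With that patch the remaining steps (the polarization identity, the passage of the hypothesis to prime quotients, the identity $\bigcap\{P:\operatorname{char}R/P\neq 2\}=\ell_R(2R)=0$ in the $2$-torsion-free case, and the use of Lemma \ref{lem18}) are all correct.
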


\begin{proof}
In view of Lemma \ref{lem5}, we have
${\mathcal{I}}([L, L])\subseteq L+L^2\subseteq \widetilde L$.
Thus $[x^n, R]=0$ for all $x\in {\mathcal{I}}([L, L])$. In view of \cite[%
Lemma 2]{lee1996}, we have $[x, R]=0$ for all $x\in {\mathcal{I}}([L, L])$.
That is, ${\mathcal{I}}([L, L])\subseteq Z(R)$. In particular, $[L, L]\subseteq
Z(R)$.

In addition, assume that $R$ is $2$-torsion free. Let $P$ be a prime ideal
of $R$ such that $\text{\textrm{char}}\,R/P\ne 2$. Working in $R:=R/P$, we
get $[\overline L, \overline L]\subseteq Z(\overline R)$ and so $\overline
L\subseteq Z(\overline R)$ (see \cite[Lemma 6]{bergen1981}). That is, $[L,
R]\subseteq P$. Since $R$ is $2$-torsion free, the intersection of prime
ideals $P$ of $R$ with $\text{\textrm{char}}\,R/P\ne 2$ is zero, it follows
that $[L, R]=0$ and so $L\subseteq Z(R)$, as desired.
\end{proof}

Let $R$ be a semiprime ring, and let $L$ be a Lie ideal of $R$. We denote $%
\ell _{R}(L):=\{a\in R\mid aL=0\}$, the left annihilator of $L$ in $R$. In
view of Lemma \ref{lem6},
\begin{equation*}
\ell _{R}(L)=R\cap \ell_{Q(R)}(L),
\end{equation*}
implying that $\ell _{R}(L)$ is an ideal of $R$.\vskip8pt

\noindent{\bf Proof of Theorem \ref{thm22}.}\vskip4pt

Let $Q:=Q(R)$. In view of Lemma \ref{lem6}, there exists an idempotent $e\in
C$ such that
\begin{equation*}
\ell_{Q}\Big(\sum_{x\in \widetilde L}R[x^2, R]R\Big)=eQ.
\end{equation*}
Clearly, $ex^2\in C$ for all $x\in \widetilde L$. This proves (i). We next
prove (ii), i.e., $(1-e)R$ is $(1-e)L$-semiprime.

Since $\ell_R([L, R])=0$, it is clear that
\begin{equation}  \label{eq:8}
\ell_{(1-e)R}([(1-e)L, (1-e)R])=0.
\end{equation}
Let $b(1-e)Lb=0$, where $b=(1-e)a\in (1-e)R$ for some $a\in R$. The aim is
to prove $b=0$. By $b\in (1-e)R$, we get $a(1-e)La=0$. Note that $(1-e)L$ is
a Lie ideal of the semiprime ring $(1-e)R$. In view of Lemma \ref{lem8} (i), we
get $[a, (1-e)L]=0$. In particular, $\big[a,[(1-e)L,R]\big]=0$.

Also, $0=a(1-e)La=a^{2}(1-e)L$, implying
\begin{equation*}
(1-e)a^{2}\big[(1-e)L, (1-e)R\big]=0.
\end{equation*}
It follows from Eq.\eqref{eq:8}  that $(1-e)a^{2}=0$, that is, $b^2=0$. By
Lemma \ref{lem4} (i), we have $[L, R]=[\widetilde L, R]$ and so $\big[a,
[(1-e)\widetilde L,R]\big]=0$, implying
\begin{equation}
\big[{(1-e)\widetilde L}, [b, (1-e)R]\big]=0.  \label{eq:3}
\end{equation}

Let $P$ be a prime ideal of $(1-e)R$ and let ${\overline{(1-e)R}}:= (1-e)R/P$%
. By Eq.\eqref{eq:3}, we have
\begin{equation*}
\big[{(1-e)\widetilde L}+P/P, [\overline b, {\overline{(1-e)R}}]\big]%
=\overline 0.
\end{equation*}

If $\overline b\notin Z( {\overline{(1-e)R}})$, it follows from Lemma \ref%
{lem3} that ${\overline x}^2\subseteq Z(\overline R)$ for all $x\in {%
(1-e)\widetilde L}$. Otherwise, $\overline b\in Z(\overline R)$ and so $%
\overline b=\overline 0$ as $b^2=0$. In either case, we have
\begin{equation*}
b[x^2, (1-e)R]\subseteq P
\end{equation*}
for all $x\in (1-e){\widetilde L}$. Since $P$ is arbitrary, the
semiprimeness of $(1-e)R$ forces $b[x^2, R]=0$ for all $x\in \widetilde L$.
Hence
\begin{equation*}
b\sum_{x\in \widetilde L}R[x^2, R]R=0.
\end{equation*}
Thus $b\in\ell_{Q}\Big(\sum_{x\in \widetilde L}R[x^2, R]R\Big)=eQ$. Since $%
b=(1-e)a$, we get $b=0$, as desired.\vskip4pt

Finally, assume that $R$ is $2$-torsion free. Since (i) holds, we have $%
ex^2\in C$ for all $x\in \widetilde L$. Note that $eL$ is a Lie ideal of the
$2$-torsion free semiprime ring $eR$. It follows from Lemma \ref{lem7} that $%
eL\subseteq C$. This implies that $e[L, R]=0$ and so $e=0$ as $\ell_R([L, R])=0$. Thus $R$ is $L$-semiprime.
\hfill $\square$
\vskip6pt

The following extends Theorem \ref{thm22} to the general case without the assumption $\ell_R([L, R])=0$.

\begin{theorem}
\label{thm19} Let $R$ be a semiprime ring, and
let $L$ be a Lie ideal of $R$. Then there exist orthogonal idempotents $%
e_{1}, e_{2}, e_{3}\in C$ with $e_{1}+e_{2}+e_{3}=1$ such that

(i)\ $e_1L\subseteq C$,

(ii)\ $e_2x^2\in C$ for all $x\in \widetilde L$, and

(iii)\ $e_3R$ is $e_3L$-semiprime.

\noindent In addition, if $R$ is $2$-torsion free, then $(e_1+e_2)L\subseteq
C$.
\end{theorem}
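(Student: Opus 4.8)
The plan is to reduce the general case to Theorem \ref{thm22} by splitting $Q:=Q(R)$ along the annihilator of $[L,R]$. First I would apply Lemma \ref{lem6} to the Lie ideal $[L,R]$ to obtain an idempotent $e_1\in C$ with $\ell_Q([L,R])=e_1Q$; equivalently $e_1[L,R]=0$, hence $e_1L$ is a central Lie ideal of the semiprime ring $e_1R$ — but more directly $e_1L\subseteq C$ follows because $[e_1L,R]=e_1[L,R]=0$ and $e_1L$ lies in the semiprime ring $e_1R$ whose extended centroid is $e_1C$; so (i) holds. (Here I would use that $e_1R$ is again a semiprime ring with $Q(e_1R)=e_1Q$ and extended centroid $e_1C$, a standard fact from \cite{beidar1996}.) Now set $f:=1-e_1$. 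Then $fL$ is a Lie ideal of the semiprime ring $fR$, and by the choice of $e_1$ we have $\ell_{fR}([fL,fR])=f\ell_Q([L,R])=0$.

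Next I would apply Theorem \ref{thm22} to the semiprime ring $fR$ and its Lie ideal $fL$. This yields an idempotent $\bar e\in fC$ (the extended centroid of $fR$) such that $\bar e x^2\in fC\subseteq C$ for all $x\in\widetilde{fL}=f\widetilde L$, and $(f-\bar e)R$ is $(f-\bar e)L$-semiprime. Put $e_2:=\bar e$ and $e_3:=f-\bar e=1-e_1-e_2$. Then $e_1,e_2,e_3$ are orthogonal idempotents in $C$ summing to $1$; statement (iii) is immediate, and statement (ii) follows since for $x\in\widetilde L$ we have $e_2x^2=e_2(fx)^2\in C$ because $e_2=e_2 f$ and $fx\in f\widetilde L=\widetilde{fL}$ (using that $\widetilde{fL}=f\widetilde L$ since $f$ is a central idempotent). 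For the final assertion, when $R$ is $2$-torsion free so is $fR$, and the "in addition" clause of Theorem \ref{thm22} applied to $fR$ gives $\bar e=0$, i.e. $e_2=0$; then $(e_1+e_2)L=e_1L\subseteq C$ by (i).

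The main obstacle, and the step requiring the most care, is the bookkeeping of extended centroids and Martindale quotient rings under the central idempotent splitting $R\leadsto e_1R$ and $fR\leadsto(f-\bar e)R$: one must verify that $Q(e_1R)=e_1Q(R)$ and $C(e_1R)=e_1C$ (and similarly for $fR$), so that the idempotent $e_2$ produced by Theorem \ref{thm22} inside the extended centroid of $fR$ can legitimately be regarded as an idempotent of $C$ itself, orthogonal to $e_1$. This is classical (see \cite{beidar1996}), but it is the one place where the argument is not a direct quotation of an earlier result. The remaining identities — that $\widetilde{fL}=f\widetilde L$ and that annihilator conditions localize correctly along $f$ — are routine once the quotient-ring identification is in hand, and I would state them briefly rather than belabor them.
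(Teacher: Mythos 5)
Your proposal is correct and follows essentially the same route as the paper: Lemma \ref{lem6} applied to $[L,R]$ produces $e_1$ with $\ell_Q([L,R])=e_1Q$ (hence $e_1L\subseteq C$), and then Theorem \ref{thm22} is applied to the semiprime ring $fR$, $f=1-e_1$, with Lie ideal $fL$, identifying $Q(fR)=fQ$ and its extended centroid with $fC$ to obtain $e_2$ and $e_3=f-e_2$. The only cosmetic difference is in the $2$-torsion free case, where you invoke the full ``in addition'' clause of Theorem \ref{thm22} to get $e_2=0$, while the paper only extracts the intermediate conclusion $e_2L\subseteq C$; both yield $(e_1+e_2)L\subseteq C$.
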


\begin{proof}
Let $Q:=Q(R)$. In view of Lemma \ref{lem6}, there exists an idempotent $%
e_1\in C$ such that
\begin{equation*}
\ell_{Q}([L, R])=e_1Q.
\end{equation*}
Clearly, we have $e_1L\subseteq C$. This proves (i). Let $f:=1-e_1$. Then $%
fQ $ is the Martindale symmetric ring of quotients of $fR$ (see \cite%
{beidar1996}). It is clear that $\ell_{fQ}([fL, fR])=0$.

In view of Theorem \ref{thm22}, there exists an idempotent $e_2\in fC$
such that $e_2x^2\in C$ for all $x\in \widetilde L$. This proves (ii).
Moreover, $e_3R$ is $e_3L$-semiprime, where $e_3:=f-e_2$ and hence (iii) is
proved.

Finally, assume that $R$ is $2$-torsion free. By Theorem \ref{thm22}, $%
e_2L\subseteq C$ and hence $(e_1+e_2)L\subseteq C$, as desired.
\end{proof}

In a prime ring $R$, a Lie ideal $L$ of $R$ is noncentral iff $\ell
_{R}([L,R])=0$. Thus we have the following (see also Theorem \ref{thm8} (i)).

\begin{corollary}
\label{cor10} Let $R$ be a prime ring of characteristic $\neq 2$, and let $L$
be a noncentral Lie ideal of $R$. Then $R$ is $L$-semiprime.
\end{corollary}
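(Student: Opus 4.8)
The plan is to deduce Corollary \ref{cor10} directly from the already-established Theorem \ref{thm22} together with the standard fact about Lie ideals in prime rings. First I would recall why, in a prime ring $R$, a Lie ideal $L$ is noncentral if and only if $\ell_R([L,R])=0$: if $L$ is noncentral then by Lemma \ref{lem2} (applied to a noncentral element of $L$, or directly since $[L,R]\ne 0$) we have $[L,R]\ne 0$, and in a prime ring the left annihilator of a nonzero subset that generates a nonzero ideal must be zero; conversely, if $L$ were central then $[L,R]=0$ and $\ell_R([L,R])=R\ne 0$. So the hypothesis that $L$ is noncentral gives us exactly $\ell_R([L,R])=0$, which is the running assumption of Theorem \ref{thm22}.

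Next I would invoke Theorem \ref{thm22}. Since $R$ is a prime ring of characteristic $\ne 2$, it is in particular a $2$-torsion free semiprime ring. The ``In addition'' clause of Theorem \ref{thm22} then applies verbatim: under $\ell_R([L,R])=0$ and $2$-torsion freeness, the idempotent $e\in C$ produced by the theorem is forced to be $0$, and the conclusion is precisely that $R$ is $L$-semiprime. That is the entire argument — Corollary \ref{cor10} is an immediate specialization.

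I do not expect any real obstacle here, since all the work has been done in Theorem \ref{thm22} and its proof; the only point that requires a word of justification is the equivalence ``$L$ noncentral $\iff \ell_R([L,R])=0$'' in a prime ring, and even that is flagged as well-known in the sentence preceding the corollary. If one wanted a self-contained line, one would note that $[L,R]\ne 0$ when $L$ is noncentral (otherwise $L\subseteq Z(R)$ by definition of the center), and then $a[L,R]=0$ with $R$ prime forces $a=0$ because $[L,R]$ generates a nonzero ideal of $R$. The only subtlety worth keeping in mind is that $2$-torsion freeness of a prime ring is equivalent to $\mathrm{char}\,R\ne 2$, which is what lets us quote the $2$-torsion free case of Theorem \ref{thm22}.

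In short: the proof reads ``Since $R$ is prime and $L$ is noncentral, $\ell_R([L,R])=0$. As $\mathrm{char}\,R\ne 2$, $R$ is a $2$-torsion free semiprime ring, so by Theorem \ref{thm22}, $R$ is $L$-semiprime.'' No further computation is needed.
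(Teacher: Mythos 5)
Your proposal is correct and follows the same route as the paper: the corollary is stated there as an immediate consequence of the equivalence "noncentral $\iff \ell_R([L,R])=0$" in a prime ring together with the $2$-torsion free case of Theorem \ref{thm22} (a prime ring of characteristic $\neq 2$ being $2$-torsion free). The only cosmetic difference is that the paper also points to Theorem \ref{thm8}(i) as an alternative source, and your parenthetical justification of $\ell_R([L,R])=0$ is slightly loose (one should pass from $a[L,R]=0$ to $aR[L,R]=0$ via $[l,xy]=[l,x]y+x[l,y]$ before invoking primeness), but the fact itself is quoted as well-known in the paper.
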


\begin{remark}
\label{remark4}\normalfont There exists a prime ring $R$ of characteristic $%
2 $ and a Lie ideal $L$ of $R$ such that $\ell _{R}([L,R])=0$ but $R$ is not
$L $-semiprime (see Remark \ref{remark1} (i)).
\end{remark}

\noindent{\bf Proof of Theorem \ref{thm13}.}\vskip4pt

``$\Longrightarrow$":\ Clearly, $[B, R]=[B^+, R]$. Let $a[B^+, R]a=0$, where
$a\in R$. The aim is to prove $a=0$.

Let $e\in B$ and $x\in R$. Then $ex(1-e)=[e,ex(1-e)]\in \lbrack B,R]$ and so
$aex(1-e)a=0$. Hence $aeR(1-e)a=0$ and the semiprimeness of $R$ implies $%
(1-e)ae=0$. Similarly, we have $ea(1-e)=0$ and so $[e, a]=0$. That is, $%
[a, B]=0$ and so $[a, B^{+}]=0$. Since $B^{+}$ is a Lie ideal of $R$, we get $%
\big[a, [B^{+}, R]\big]=0$ and hence
\begin{equation*}
\big[B^{+}, [a, R]\big]=0.
\end{equation*}%
Let $e\in B$, and let $P$ be a prime ideal of $R$. Then $\big[{\overline{e}}%
,[{\overline{a}}, {\overline{R}}]\big]=\overline{0}$, where ${\overline{R}}%
:=R/P$.

If ${\overline e}={\overline e}^2\notin Z({\overline R})$, by Lemma \ref{lem3} we get ${%
\overline a}\in Z({\overline R})$ and so $[a, R]\subseteq P$. Otherwise, we
have ${\overline e}\in Z({\overline R})$. That is, $[e, R]\subseteq P$. In
either case, we conclude that $[a, R][B, R]\subseteq P$. Since $P$ is an
arbitrary prime ideal of $R$, the semiprimeness of $R$ implies $[a, R][B,
R]=0$. Since $\ell_R([B, R])=0$, we get $[a, R]=0$ and so $a\in Z(R)$.

By the fact that $a[B, R]a=0$, we get $a^2[B, R]=0$ and so $a^2=0$. Since
the center of a semiprime ring is reduced, we have $a=0$, as desired.

``$\Longleftarrow$":\ Suppose not. Then $\ell_R([B, R])\ne 0$. Choose a
nonzero $a\in \ell_R([B, R])$. Then $a[B, R]a=0$, a contradiction.
\hfill $\square$
\vskip6pt

The following extends Theorem \ref{thm13} to the general case without the assumption $\ell_R([B, R])=0$.

\begin{theorem}
\label{thm20} Let $R$ be a semiprime ring, and
let $B$ be a subset of $\mathit{Id}(R)$ such that $B^+$ is a Lie ideal of $R$.
Then there exists an idempotent $e\in C$ such that

(i)\ $eB^+\subseteq C$, and

(ii)\ $(1-e)R$ is $(1-e)[B, R]$-semiprime.
\end{theorem}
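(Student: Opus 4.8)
The plan is to mirror the proof of Theorem \ref{thm19} almost verbatim, but with Theorem \ref{thm13} playing the role that Theorem \ref{thm22} played there; this is precisely what lets us get by with a single idempotent $e$ instead of three, since Theorem \ref{thm13} already delivers $[B,R]$-semiprimeness from the vanishing of $\ell_R([B,R])$, with no intermediate ``$e_2x^2\in C$'' phenomenon to account for. Write $Q:=Q(R)$. Since $B^{+}$ is a Lie ideal of $R$, we have $[B,R]=[B^{+},R]$, and this is a Lie ideal of $R$; so Lemma \ref{lem6} produces an idempotent $e\in C$ with $\ell_{Q}([B,R])=eQ$. This $e$ will be the idempotent in the statement.

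For (i), I would argue as follows. From $e[B,R]=0$ and the centrality of $e$ in $Q$, for each $b\in B^{+}$ and $r\in R$ one has $0=e[b,r]=[eb,r]$, so $eb$ centralizes $R$ inside $Q$. Since the centralizer of $R$ in $Q$ is exactly $C$ (see \cite{beidar1996}), we get $eb\in C$, i.e. $eB^{+}\subseteq C$, which is (i).

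For (ii), put $f:=1-e$. Because $f$ is a central idempotent, $fB\subseteq \mathit{Id}(fR)$, $(fB)^{+}=fB^{+}$ is a Lie ideal of $fR$, $fQ$ is the Martindale symmetric ring of quotients of $fR$, and $[fB,fR]=f[B,R]=(1-e)[B,R]$. I would then check that $\ell_{fR}([fB,fR])=0$: any $a\in fR$ annihilating $[fB,fR]=f[B,R]$ also annihilates $[B,R]$ (since $af=a$), hence lies in $\ell_{Q}([B,R])\cap fR=eQ\cap fR=0$. Now apply Theorem \ref{thm13} to the semiprime ring $fR$ together with the subset $fB$ of its idempotents: as $\ell_{fR}([fB,fR])=0$, the ring $fR$ is $[fB,fR]$-semiprime, which is exactly the assertion that $(1-e)R$ is $(1-e)[B,R]$-semiprime.

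There is no real obstacle here beyond bookkeeping: the substantive content lives entirely in Theorem \ref{thm13}, and everything else is the standard behaviour of the Martindale symmetric ring of quotients under cutting by a central idempotent — namely that $fQ(R)$ is $Q(fR)$, that the centralizer of $R$ in $Q(R)$ is $C$, and that $\ell_{fR}\bigl((1-e)[B,R]\bigr)$ is the trace on $fR$ of $\ell_{Q(R)}([B,R])=eQ(R)$. The one step worth writing out with care is the last identity, which is handled above; the rest is routine.
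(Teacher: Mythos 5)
Your proposal is correct and follows essentially the same route as the paper: both obtain $e\in C$ from Lemma \ref{lem6} via $\ell_{Q}([B,R])=eQ$, deduce (i) from $[eB^{+},R]=0$, and prove (ii) by checking that the left annihilator of $(1-e)[B,R]$ in $(1-e)R$ lands in $eQ\cap(1-e)Q=0$ and then invoking Theorem \ref{thm13} for the semiprime ring $(1-e)R$ with the idempotent set $(1-e)B$. The extra remark about $fQ(R)$ being the symmetric ring of quotients of $fR$ is harmless but not needed.
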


\begin{proof}
Let $Q:=Q(R)$. It follows from Lemma \ref{lem6} that
\begin{equation*}
\ell _{Q}([B,R])=\ell _{Q}(Q[B^{+}, R]Q)=eQ
\end{equation*}%
for some $e=e^{2}\in C$. Then $[eB^{+}, R]=0$, implying $eB^{+}\subseteq C$
and this proves (i).

Note that $(1-e)B\subseteq \mathit{Id}((1-e)R)$ and $(1-e)B^{+}$ is a Lie
ideal of the semiprime ring $(1-e)R$. Let $a\in (1-e)R$ be such that $%
a[(1-e)B^{+}, (1-e)R]=0$. Then $a[B^{+}, R]=0$ and so $a\in eQ$. Hence $a=0$
follows. In view of Theorem \ref{thm13}, $(1-e)R$ is $(1-e)[B,R]$-semiprime
and this proves (ii).
\end{proof}

The following complements Proposition \ref{pro1} (see Problem \ref{problem1}).

\begin{theorem}
\label{thm16} Let $R$ be a semiprime ring. Then $\ell _{R}([E(R),R])=0$ iff $%
R $ is $[E(R),R]$-semiprime.
\end{theorem}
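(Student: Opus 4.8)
The plan is to apply Theorem \ref{thm13} with the special subset $B := \mathit{Id}(R)$, the full set of idempotents of $R$. This is legitimate precisely because $B^+ = \mathit{Id}(R)^+ = E(R)$ is a Lie ideal of $R$ by Lemma \ref{lem16}, so the hypothesis of Theorem \ref{thm13} is met with no extra work. Under that identification, $[B, R] = [B^+, R] = [E(R), R]$, and Theorem \ref{thm13} states that $\ell_R([B, R]) = 0$ iff $R$ is $[B, R]$-semiprime. Translating the notation back gives exactly the claim: $\ell_R([E(R), R]) = 0$ iff $R$ is $[E(R), R]$-semiprime.

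So the proof is essentially a one-line reduction. The only points to be checked are bookkeeping: first, that $\mathit{Id}(R)$ is a nonempty subset of $\mathit{Id}(R)$ (trivially $0, 1 \in \mathit{Id}(R)$); second, that $E(R) = \mathit{Id}(R)^+$ is a Lie ideal, which is Lemma \ref{lem16}; and third, that the additive span of $B$ is indeed $E(R)$, which is just the definition $E(R) := \mathit{Id}(R)^+$ recorded in the introduction. Once these are in place, Theorem \ref{thm13} applies verbatim.

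I do not anticipate a genuine obstacle here, since all the substance has already been absorbed into Theorem \ref{thm13} (whose proof in turn handles the two directions: the forward direction via the $ex(1-e)$ trick combined with Lemma \ref{lem3} applied in prime quotients $R/P$, and the trivial backward direction by picking a nonzero element of the annihilator). The one thing to be careful about in writing it up is simply to state the specialization cleanly — remark that "$[B, R] = [B^+, R]$" and that $B^+ = E(R)$ — so that the reader sees immediately why Theorem \ref{thm13} delivers the conclusion. If one preferred a self-contained argument, one could instead mimic the proof of Theorem \ref{thm13} directly with $E(R)$ in place of $B^+$, using that for $e = e^2$ and $x \in R$ one has $ex(1-e) = [e, ex(1-e)] \in [E(R), R]$; but invoking Theorem \ref{thm13} is shorter and is surely the intended route.
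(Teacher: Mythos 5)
Your proposal is correct and is exactly the route the paper intends: the paper states Theorem \ref{thm16} without a separate proof precisely because it is the special case $B=\mathit{Id}(R)$ of Theorem \ref{thm13}, where $B^{+}=E(R)$ is a Lie ideal by Lemma \ref{lem16} and $[B,R]=[B^{+},R]=[E(R),R]$. Your bookkeeping checks (nonemptiness, $E(R)=\mathit{Id}(R)^{+}$, the Lie ideal property) are all that is needed.
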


\begin{corollary}
\normalfont\label{cor1}
If $R$ is a semiprime ring such that $[E(R),R]$
contains a unit of $R$, then $R$ is $[E(R),R]$-semiprime.
\end{corollary}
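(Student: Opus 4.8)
The plan is to obtain Corollary~\ref{cor1} as an immediate consequence of Theorem~\ref{thm16}. That theorem states that a semiprime ring $R$ is $[E(R),R]$-semiprime if and only if $\ell_R([E(R),R])=0$, so the entire task reduces to checking that the hypothesis ``$[E(R),R]$ contains a unit of $R$'' forces the left annihilator of $[E(R),R]$ to vanish.

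To see this I would fix a unit $u\in U(R)$ lying in $[E(R),R]$. Given any $a\in\ell_R([E(R),R])$, by definition $aw=0$ for every $w\in[E(R),R]$, and in particular $au=0$; right-multiplying by $u^{-1}$ gives $a=0$. Hence $\ell_R([E(R),R])=0$, and Theorem~\ref{thm16} yields that $R$ is $[E(R),R]$-semiprime.

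There is no genuine obstacle here: all the substance is already carried by Theorem~\ref{thm16} (equivalently, by the case $B=\mathit{Id}(R)$ of Theorem~\ref{thm13}, noting that $E(R)=\mathit{Id}(R)^{+}$ is a Lie ideal of $R$ by Lemma~\ref{lem16}). The only point worth isolating is the elementary fact that the left annihilator of a set is contained in the left annihilator of any single one of its elements, so a lone unit inside $[E(R),R]$ already kills the whole annihilator ideal. One could in principle attempt a self-contained argument directly from $a[E(R),R]a=0$---Lemma~\ref{lem8}(i) gives $[a,E(R)]=0$, whence $a^2[E(R),R]=0$ and then $a^2u=0$, so $a^2=0$---but finishing this to $a=0$ without Theorem~\ref{thm16} would amount to reproving the annihilator analysis behind it, so appealing to Theorem~\ref{thm16} is the clean route.
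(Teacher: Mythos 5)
Your proof is correct and matches the paper's intent: the corollary is stated immediately after Theorem \ref{thm16} precisely because it follows from it in the way you describe, namely that a unit $u\in[E(R),R]$ forces $\ell_R([E(R),R])=0$ (since $au=0$ implies $a=0$), after which Theorem \ref{thm16} applies. No gaps; this is the same route the paper takes.
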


\section{Regular rings}

When dealing with idempotent semiprime rings, it is natural to consider
rings having many idempotents. Since regular rings also have many
idempotents, a comparison is in order. Note that regular rings are
unit-semiprime (see \cite{cal2}).

\begin{theorem}
\label{thm1} Let $R$ be a semiprime ring. Suppose that, given any prime
ideal $P$ of $R$, either $R/P$ is a domain or there exists an idempotent $%
e\in R$ such that $\overline{e}\notin Z(\overline R)$, where $\overline R:=
R/P$. Then $R$ is idempotent semiprime.
\end{theorem}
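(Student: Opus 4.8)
The plan is to show that the idempotent $a$ with $aE(R)a=0$ must vanish by testing it modulo every prime ideal $P$ of $R$ and then invoking semiprimeness (the intersection of all prime ideals of a semiprime ring being zero). So first I would fix $a\in R$ with $aE(R)a=0$; taking $e=1\in E(R)$ gives $a^2=0$ already, and more generally $aea=0$ for every idempotent $e\in R$. I would like to conclude $a[E(R),R]a=0$ or, even better, apply Theorem \ref{thm16}: it suffices to prove $\ell_R([E(R),R])=0$. Actually, a cleaner route is to work prime ideal by prime ideal directly, since the hypothesis is phrased that way.

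Fix a prime ideal $P$ and pass to $\overline R:=R/P$, a prime ring; write $\overline a$ for the image of $a$. By hypothesis there are two cases. If $\overline R$ is a domain, then $\overline a^2=0$ forces $\overline a=0$, i.e. $a\in P$, and we are done for this $P$. Otherwise there is an idempotent $e\in R$ with $\overline e\notin Z(\overline R)$; the idea is to use $e$ to produce enough idempotents in $\overline R$ to kill $\overline a$. The key computation: for any $x\in R$, both $e+ex(1-e)$ and $(1-e)+ (1-e)xe$ (equivalently $e+(1-e)xe$ after conjugating) are idempotents of $R$, so $a\big(e+ex(1-e)\big)a=0$ and, using $aea=0$, we get $aex(1-e)a=0$ for all $x$, hence $aeR(1-e)a=0$; similarly $a(1-e)Rea=0$. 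Reducing mod $P$ and using primeness of $\overline R$ gives $\overline a\,\overline e=0$ or $(1-\overline e)\overline a=0$, and symmetrically $\overline e\,\overline a=0$ or $\overline a(1-\overline e)=0$. Chasing these four alternatives (as in the proof of Theorem \ref{thm9} and Theorem \ref{thm13}) yields $[\overline e,\overline a]=0$, and then $\overline a\,\overline e\,\overline a=\overline e\,\overline a^2=\overline 0$ — but this only repeats what we know. The real point is that once $[\overline a,\overline e]=0$ for this particular noncentral $\overline e$, I would push further: since $\overline e\notin Z(\overline R)$, Lemma \ref{lem3} (with $b=\overline e$, noting $\overline e^2=\overline e$ is central so automatically $a=\overline a\in Z(\overline R)$ when $\mathrm{char}\,\overline R\neq 2$) or a direct Peirce-decomposition argument forces $\overline a=\overline 0$; in characteristic $2$ one uses the noncentrality more carefully, splitting $\overline a$ into Peirce components relative to $\overline e$ and using $\overline a\,\overline e\,\overline R(1-\overline e)\overline a=0$ type relations.

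Concretely, I would finish the noncentral case as follows: from $aeR(1-e)a=0$ and $a(1-e)Rea=0$ and the semiprimeness of $R$ (applied in $R$, not in $\overline R$ — this is exactly the argument in Theorem \ref{thm9}) one gets $(1-e)ae=0=ea(1-e)$, so $[a,e]=0$ and $aea=ea^2=0$. That shows $aE(R)a=0$ is consistent but again self-referential, so instead I record the stronger consequence $ae=ea$ for \emph{every} idempotent $e$ of $R$; combined with $a^2=0$ this gives $a[E(R),R]a = a[E(R),E(R)]a=0$ via Lemma \ref{lem17}, but more usefully it shows that if $\overline e$ is noncentral in $\overline R$ then $\overline a$ commutes with a noncentral element while $\overline a^2=0$; in a prime ring a square-zero element commuting with a noncentral idempotent, together with the Peirce relations $\overline a\,\overline e\,\overline R(1-\overline e)\,\overline a=0$, forces $\overline a=0$ by primeness. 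Hence $a\in P$ in every case, so $a\in\bigcap P=0$ by semiprimeness, and $R$ is idempotent semiprime.

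The main obstacle I anticipate is the characteristic-$2$ subtlety: Lemma \ref{lem3} only gives $\overline a^2\in Z(\overline R)$ in general and needs $\mathrm{char}\neq 2$ to conclude $\overline a\in Z(\overline R)$, so in characteristic $2$ I must extract the conclusion $\overline a=0$ purely from the Peirce/commutation relations $[\overline a,\overline e]=0$, $\overline a^2=0$, $\overline a\,\overline e\,\overline R(1-\overline e)\,\overline a=0$ and the primeness of $\overline R$ with $\overline e$ noncentral — this is where the hypothesis that the idempotent is \emph{noncentral} (rather than merely nontrivial) is essential, and care is needed to verify that these relations genuinely pin down $\overline a$ without circularity.
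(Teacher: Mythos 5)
Your reduction to prime quotients, the domain case, and the preliminary computation carried out in $R$ (via semiprimeness, as in Theorem \ref{thm9}) that $a^2=0$ and $[a,e]=0$ for every idempotent $e\in R$ are all fine. The gap is in how you finish Case 2. Your key claim --- that in a prime ring a square-zero element $\overline a$ commuting with a noncentral idempotent $\overline e$ and satisfying $\overline a\,\overline e\,\overline R(1-\overline e)\overline a=0=\overline a(1-\overline e)\overline R\,\overline e\,\overline a$ must vanish --- is false: in $\overline R=\mathbb{M}_3(F)$ take $\overline e=e_{11}$ and $\overline a=e_{23}$; then $\overline a^{\,2}=0$, $\overline a\,\overline e=0=\overline e\,\overline a$, all the Peirce relations hold trivially, yet $\overline a\neq 0$. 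So the relations you record do not pin down $\overline a$. Your alternative via Lemma \ref{lem3} with $b=\overline e$ is also not available as written: it needs $[\overline a,[\overline e,\overline R]]=0$, which you never establish (you only have $[\overline a,\overline e]=0$), and even granted that, it only yields $\overline a\in Z(\overline R)$ when $\mathrm{char}\,\overline R\neq 2$; you explicitly leave the characteristic-$2$ case unresolved, so the proof is incomplete precisely where the work is.

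Both repairs use information you already have but do not exploit. Since $a$ commutes with \emph{every} idempotent of $R$, it commutes with $e+ex(1-e)$ and $e+(1-e)xe$, hence $[a,[e,x]]=0$ for all $x\in R$, and the Jacobi identity together with $[a,e]=0$ gives $[\overline e,[\overline a,\overline x]]=0$ for all $\overline x\in\overline R$. Now apply Lemma \ref{lem3} with the roles swapped ($a:=\overline e$, $b:=\overline a$), exactly as in the paper's proof of Theorem \ref{thm13}: if $\overline a\notin Z(\overline R)$, then $\overline e=\overline e^{\,2}\in Z(\overline R)$, a contradiction; so $\overline a\in Z(\overline R)$, and then $\overline a\,\overline R\,\overline a=\overline a^{\,2}\overline R=0$ forces $\overline a=0$ in every characteristic. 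The paper's own proof of Theorem \ref{thm1} is shorter still and bypasses the commutation argument entirely: $\overline{E(R)}$ is a Lie ideal of $\overline R$ with $[\overline{E(R)},\overline{E(R)}]=[\overline{E(R)},\overline R]\neq 0$ by Lemma \ref{lem17} and the noncentrality of $\overline e$, so Theorem \ref{thm3}(ii) immediately gives that $\overline R$ is $\overline{E(R)}$-prime, whence $\overline a=0$ and $a\in P$.
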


\begin{proof}
We let $E:=E(R)$.
Let $aEa=0, $where $a\in R$. The aim is to prove $a=0$.

Let $P$ be a prime ideal of $R$. We have $\overline{a}%
\overline{E}\,\overline{a}=\overline{0}$ in $\overline R:= R/P$.  We divide the argument into two cases.

Case 1:\ $\overline R$ is a domain. Then $\overline{a}\overline{E}\,%
\overline{a}=\overline{0}$ and so $a\in P$.

Case 2:\ There exists an idempotent $e\in R$ such that $\overline{e}\notin
Z(\overline R)$. Then $\overline{a}\overline{E}\,\overline{a}=\overline{0}$
in $\overline R$. Note that $\overline{E}$ is also a Lie ideal of the prime ring $\overline R$.
By Lemma \ref{lem17} we
have
\begin{equation*}
[\overline{E}, \overline{R}]=\overline{[E, R]}=\overline{[E, E]}=[\overline{E}, \overline{E}].
\end{equation*}
Thus $[\overline{E}, \overline{E}]\ne 0$ since $\overline{e}\notin
Z(\overline R)$. By Theorem \ref{thm3} (ii), the prime ring $\overline R$ is $\overline E$-prime.
Thus $\overline{a}=0$
in $\overline R$ and so $a\in P$.

In either case, we have $a\in P$. Since $P$ is arbitrary, we get $a=0$. Thus
$R$ is idempotent semiprime.
\end{proof}

\begin{theorem}
\label{thm2} Every regular ring is idempotent semiprime.
\end{theorem}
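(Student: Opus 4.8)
The plan is to prove directly that $aE(R)a=0$ forces $a=0$; since a von Neumann regular ring has no nonzero nilpotent two‑sided ideals (if $I^2=0$ and $a\in I$, then $a=axa\in I^2=0$ for a suitable $x$, and the general nilpotent case reduces to this), $R$ is semiprime, so it suffices to produce $aRa=0$. Applying the hypothesis with the idempotent $e=1$ gives $a^2=0$. Now invoke regularity: choose $x\in R$ with $axa=a$ and set $e_1:=ax$. Then $e_1^2=axax=(axa)x=ax=e_1$, so $e_1$ is idempotent, and moreover $e_1a=axa=a$ while $ae_1=a^2x=0$.

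The key step is to feed a one‑parameter family of idempotents into the hypothesis. Fix $r\in R$, put $f:=1-e_1$ and consider $g:=f+fre_1$. Using $f^2=f$ and $e_1f=e_1-e_1^2=0$ one checks at once that $g^2=f+fre_1=g$, so $g$ is an idempotent of $R$, whence $aga=0$. On the other hand $af=a-ae_1=a$ and $e_1a=a$, so
\[
aga=a(f+fre_1)a=afa+(af)\,r\,(e_1a)=a^2+ara=ara ,
\]
using $a^2=0$. Hence $ara=0$ for every $r\in R$, that is, $aRa=0$, and the semiprimeness of $R$ gives $a=0$. Therefore $R$ is $E(R)$‑semiprime, i.e.\ idempotent semiprime. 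I expect the only nonroutine point to be spotting the auxiliary idempotent $g=f+fre_1$; once it is in hand the computation is immediate.

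An alternative route, matching the setup of Theorem~\ref{thm1}, runs as follows: a homomorphic image of a regular ring is again regular, so for a prime ideal $P$ the quotient $R/P$ is a prime regular ring, hence either a division ring (so $R/P$ is a domain) or a ring possessing a nontrivial idempotent, which in a prime ring is automatically noncentral; since idempotents lift modulo any two‑sided ideal in a regular ring (regular rings being exchange rings), such an idempotent of $R/P$ is the image of an idempotent of $R$, and so the hypotheses of Theorem~\ref{thm1} are satisfied and $R$ is idempotent semiprime. On this path the one substantive ingredient is the idempotent‑lifting lemma for regular rings, which would be the main obstacle to fill in; the direct argument above avoids it entirely and is the one I would actually write down.
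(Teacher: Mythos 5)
Your direct argument is correct, and it is genuinely different from the paper's proof. The paper derives Theorem \ref{thm2} from Theorem \ref{thm1}: for each prime ideal $P$ it shows that if every idempotent of $R$ has central image in $\overline R=R/P$, then regularity ($aba=a$ with $\overline{ab}$ central) forces $\overline R$ to be reduced, hence a domain; thus the hypotheses of Theorem \ref{thm1} hold, and that theorem (which rests on the Lie-ideal machinery, Lemma \ref{lem17} and Theorem \ref{thm3}(ii)) finishes the job -- note this route needs no idempotent lifting, precisely because the dichotomy in Theorem \ref{thm1} only involves images of idempotents of $R$, which is why your alternative second route (lifting via the exchange property) is heavier than what the paper actually does. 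Your first argument instead works entirely inside $R$: from $a\,\mathit{Id}(R)\,a=0$ you get $a^2=0$, regularity supplies $x$ with $axa=a$, so $e_1=ax$ satisfies $e_1a=a$, $ae_1=0$, and feeding the idempotents $g=(1-e_1)+(1-e_1)re_1$ (the standard construction already used in Lemma \ref{lem16}) into the hypothesis yields $aga=a^2+ara=ara=0$ for all $r$, whence $aRa=0$ and $a=axa=0$ (so in fact you do not even need the preliminary remark that regular rings are semiprime). This computation is more elementary, avoids prime quotients altogether, and proves slightly more: in any ring, a von Neumann regular element $a$ with $a\,\mathit{Id}(R)\,a=0$ must vanish. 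What the paper's route buys in exchange is that it verifies the hypotheses of Theorem \ref{thm1} for regular rings, a fact reused later (e.g.\ in Example \ref{example3}(ii)), and places regular rings inside that more general criterion.
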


\begin{proof}
Let $R$ be a regular ring and let $P$ be a prime ideal of $R$. Let $%
\overline R:=R/P$. Suppose that $\overline{e}\in Z(\overline R)$ for any
idempotent $e\in R$.

Let $a\in R$. Since $R$ is regular, $aba=a$ for some $b\in R$ and so $ab$ is
an idempotent of $R$. Hence $\overline{ab}\in Z(\overline R)$ and so $%
\overline{b}\overline{a}^{2}=\overline{a}$. Therefore $\overline R$ is a
reduced ring. By the primeness of $\overline R$, $\overline R$ is a domain.

 The above shows that every regular ring satisfies the assumptions of Theorem \ref{thm1}.
 Thus Theorem \ref{thm1} implies that $R$ is idempotent semiprime.
\end{proof}

\begin{example}
\normalfont\label{example3} (i)\ There exists a regular ring $R$ which is $%
E(R)$-semiprime but is not $[E(R),R]$-semiprime. For instance, let $%
R:=R_{1}\oplus R_{2}$, where $R_{1}=M_{n}(A) $ with $A$ a regular ring, $n>1$%
, and $R_{2}$ is a division ring. Clearly, $R$ is a regular ring. Then
\begin{equation*}
\ell _{R}([E(R),R])=\ell _{R_{1}}([E(R_{1}),R_{1}])\oplus \ell
_{R_{2}}([E(R_{2}),R_{2}])=0\oplus R_{2}\neq 0.
\end{equation*}%
In view of Theorem \ref{thm16}, $R$ is not $[E(R), R]$-semiprime. However, by Theorem \ref{thm2}, $R$
is $E(R)$-semiprime.

(ii)\ In the ring $R$ given in (i), it is easy to check that, given any
prime ideal $P$ of $R$, either $R/P$ is a domain or there exists an
idempotent $e\in R$ such that $\overline{e}\notin Z(\overline R)$, where $%
\overline R:= R/P$. Thus, in Theorem \ref{thm1}, we cannot conclude that $R$
is $[E(R),R]$-semiprime.
\end{example}

\section{Subdirect products}
We always assume that, given any ring $R$,
there exists a subset $X(R)$ associated with $R$. For instance, let $X(R)$
be $E(R)$, $U(R)$, $[E(R), R]$, $[R, R]$ etc. The following is clear.

\begin{theorem}
\label{thm110} Given rings $R_\beta$, $\beta\in J$, an index set, if $%
X(\prod_{\beta\in J}R_\beta)= \prod_{\beta\in J}X(R_\beta)$, then $%
\prod_{\beta\in J}R_\beta$ is $X(\prod_{\beta\in J}R_\beta)$-semiprime iff $%
R_\beta$ is $X(R_\beta)$-semiprime for all 1$\beta\in J$.
\end{theorem}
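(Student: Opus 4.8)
The plan is to exploit that both the multiplication of $R:=\prod_{\beta\in J}R_\beta$ and the set $X:=X(R)=\prod_{\beta\in J}X(R_\beta)$ are computed coordinatewise. Writing a typical element of $R$ as $a=(a_\beta)_{\beta\in J}$, one has $axa=(a_\beta x_\beta a_\beta)_{\beta\in J}$ for $x=(x_\beta)_{\beta\in J}\in R$, so $axa=0$ iff $a_\beta x_\beta a_\beta=0$ for every $\beta$. The first step is to upgrade this to the statement: for $a\in R$,
\[
aXa=0\quad\Longleftrightarrow\quad a_\beta X(R_\beta)a_\beta=0\ \text{ for every }\beta\in J.
\]
The forward implication is clear once one notes, using $X=\prod_\beta X(R_\beta)$, that the $\beta$-component of any $x\in X$ lies in $X(R_\beta)$; the backward implication uses the inclusion $\prod_\beta X(R_\beta)\subseteq X$ to manufacture, for a fixed $\beta$ and a given $y\in X(R_\beta)$, an element $x\in X$ with $x_\beta=y$ by filling the remaining coordinates with arbitrary elements of the sets $X(R_{\beta'})$. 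Here one tacitly uses that all $X(R_\beta)$ are nonempty, which is forced by $X(R)=\prod_\beta X(R_\beta)$ being nonempty.

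With this equivalence in hand the theorem is routine. For the ``if'' direction, suppose each $R_\beta$ is $X(R_\beta)$-semiprime and let $a=(a_\beta)\in R$ with $aXa=0$; by the equivalence, $a_\beta X(R_\beta)a_\beta=0$, hence $a_\beta=0$, for every $\beta$, so $a=0$ and $R$ is $X$-semiprime. For the ``only if'' direction, fix $\gamma\in J$ and let $c\in R_\gamma$ satisfy $cX(R_\gamma)c=0$; take $a\in R$ with $\gamma$-component $c$ and all other components $0$. Then $a_\beta X(R_\beta)a_\beta=0$ for every $\beta$ (trivially for $\beta\neq\gamma$, and by choice of $c$ for $\beta=\gamma$), so $aXa=0$ by the equivalence, whence $a=0$ and thus $c=0$. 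Therefore $R_\gamma$ is $X(R_\gamma)$-semiprime, and since $\gamma$ was arbitrary we are done.

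I do not expect a genuine obstacle: the only point that requires attention is that the reduction of $aXa=0$ to its coordinatewise form uses the equality hypothesis in both directions — the inclusion $\prod_\beta X(R_\beta)\subseteq X$ to have enough test elements (needed in the ``if'' half) and the inclusion $X\subseteq\prod_\beta X(R_\beta)$ to control the coordinates of a given test element (needed in the ``only if'' half). Everything else is immediate from the coordinatewise description of a direct product of rings.
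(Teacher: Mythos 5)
Your proof is correct and is exactly the routine coordinatewise verification that the paper intends: the paper states Theorem \ref{thm110} without proof (``The following is clear''), and your argument, including the remark that each $X(R_\beta)$ must be nonempty, supplies precisely that verification. One cosmetic slip only: in your auxiliary equivalence the two inclusions are attached to the wrong directions (proving $aXa=0\Rightarrow a_\beta X(R_\beta)a_\beta=0$ is the step that needs $\prod_\beta X(R_\beta)\subseteq X$ to manufacture test elements, while the converse uses $X\subseteq\prod_\beta X(R_\beta)$), but your closing paragraph assigns them correctly to the two halves of the theorem, so nothing in the argument is affected.
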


An idempotent semiprime ring $R$ just means that it is $\mathit{Id}(R)$%
-semiprime. The following is a direct consequence of of Theorem \ref{thm110}
by applying the property $Id(\prod_{\beta }R_{\beta })=\prod_{\beta
}Id(R_{\beta })$ for rings $R_{\beta }$.

\begin{proposition}
\label{pro4} A direct product of rings is idempotent semiprime iff each component
is idempotent semiprime.
\end{proposition}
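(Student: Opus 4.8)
The plan is to deduce Proposition \ref{pro4} from Theorem \ref{thm110} by taking the associated subset to be $X(R):=\mathit{Id}(R)$ for every ring $R$. First I would recall, as remarked just above the statement, that a ring $R$ is idempotent semiprime precisely when it is $\mathit{Id}(R)$-semiprime; equivalently, when it is $E(R)$-semiprime, since $R$ is $X$-semiprime iff it is $X^{+}$-semiprime and $E(R)=\mathit{Id}(R)^{+}$. So it suffices to show that $\prod_{\beta\in J}R_{\beta}$ is $\mathit{Id}\big(\prod_{\beta\in J}R_{\beta}\big)$-semiprime iff each $R_{\beta}$ is $\mathit{Id}(R_{\beta})$-semiprime.

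To apply Theorem \ref{thm110} with $X=\mathit{Id}$, the one hypothesis I need to verify is the compatibility condition $\mathit{Id}\big(\prod_{\beta\in J}R_{\beta}\big)=\prod_{\beta\in J}\mathit{Id}(R_{\beta})$. This is immediate from the coordinatewise description of multiplication in a direct product: an element $(a_{\beta})_{\beta\in J}$ of $\prod_{\beta}R_{\beta}$ squares to itself iff $(a_{\beta}^{2})_{\beta\in J}=(a_{\beta})_{\beta\in J}$, i.e.\ iff $a_{\beta}^{2}=a_{\beta}$ for every $\beta\in J$; hence the idempotents of the product are exactly the tuples of idempotents of the factors.

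With this identity in hand, Theorem \ref{thm110} applies verbatim and yields that $\prod_{\beta\in J}R_{\beta}$ is $\mathit{Id}\big(\prod_{\beta}R_{\beta}\big)$-semiprime iff $R_{\beta}$ is $\mathit{Id}(R_{\beta})$-semiprime for all $\beta\in J$, which by the first paragraph is precisely the asserted equivalence. There is no genuine obstacle in this argument; the only point requiring (minimal) attention is the coordinatewise computation of idempotents, which is the short verification just indicated, together with keeping straight the equivalence between \emph{idempotent semiprime} and \emph{$\mathit{Id}(R)$-semiprime} when translating the conclusion of Theorem \ref{thm110} back into the language of the statement.
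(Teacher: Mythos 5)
Your proposal is correct and coincides with the paper's own argument: the paper likewise derives Proposition \ref{pro4} directly from Theorem \ref{thm110} by using the identity $\mathit{Id}\big(\prod_{\beta}R_{\beta}\big)=\prod_{\beta}\mathit{Id}(R_{\beta})$. The coordinatewise verification of this identity and the translation via $E(R)=\mathit{Id}(R)^{+}$ are exactly the (routine) points the paper leaves implicit.
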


\begin{theorem}
\label{thm14} Let $R$ be a semiprime ring. Suppose that, given any prime
ideal $P$ of $R$, there exists an idempotent $e\in R$ such that $\overline{e}%
\notin Z(\overline R)$, where $\overline R:= R/P$. Then the following hold:

(i)\ $R$ is $[E(R), R]$-semiprime;

(ii)\ $\overline R$ is $[E(\overline R), \overline R]$-prime for any prime
ideal $P$ of $R$, where $\overline R:= R/P$;

(iii)\ $R$ is a subdirect product of prime rings $R_\beta$, $\beta\in I$, an
index set, such that each $R_\beta$ is $[E(R_\beta), R_\beta]$-prime.
\end{theorem}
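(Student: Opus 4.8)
The plan is to treat the three parts in the order (ii), (i), (iii), since (ii) is the substantive observation and the other two follow quickly from it together with results already established.

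For part (ii), fix a prime ideal $P$ of $R$ and write $\overline R := R/P$, a prime ring. By hypothesis there is an idempotent $e \in R$ with $\overline e \notin Z(\overline R)$, so $\overline e$ is a nontrivial idempotent of $\overline R$. Theorem \ref{thm7} applies verbatim to $\overline R$ and yields that $\overline R$ is $[E(\overline R), \overline R]$-prime, which is exactly (ii).

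For part (i), I would invoke Theorem \ref{thm16}: since $R$ is semiprime, it is $[E(R), R]$-semiprime as soon as $\ell_R([E(R), R]) = 0$, so it suffices to check the latter. Suppose $0 \neq a \in R$ with $a[E(R), R] = 0$. Because $R$ is semiprime, the intersection of its prime ideals is zero, so some prime ideal $P$ does not contain $a$; put $\overline R := R/P$. Reducing modulo $P$ gives $\overline a\,[\overline{E(R)}, \overline R] = \overline{a[E(R), R]} = \overline 0$ with $\overline a \neq \overline 0$, where $\overline{[E(R), R]} = [\overline{E(R)}, \overline R]$. Since $[\overline{E(R)}, \overline R]$ is a Lie ideal of $\overline R$, being the image of the Lie ideal $[E(R), R]$, the set $[\overline{E(R)}, \overline R]\,\overline R$ is a two-sided ideal of $\overline R$; from $\overline a\cdot[\overline{E(R)}, \overline R]\,\overline R = \overline 0$ and the primeness of $\overline R$ we get $[\overline{E(R)}, \overline R]\,\overline R = \overline 0$, hence $[\overline{E(R)}, \overline R] = \overline 0$ since $\overline R$ is unital. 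But the hypothesis applied to this very $P$ supplies an idempotent $e$ with $\overline e \notin Z(\overline R)$, so $[\overline e, \overline R] \neq \overline 0$ and a fortiori $[\overline{E(R)}, \overline R] \neq \overline 0$, a contradiction. Hence $\ell_R([E(R), R]) = 0$, and Theorem \ref{thm16} gives (i).

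For part (iii), I would use the standard fact that a semiprime ring has zero prime radical, so the canonical homomorphism $R \to \prod_{\beta \in I} R/P_\beta$, where $\{P_\beta\}_{\beta\in I}$ is the family of all prime ideals of $R$, realizes $R$ as a subdirect product of the prime rings $R_\beta := R/P_\beta$; by part (ii) each $R_\beta$ is $[E(R_\beta), R_\beta]$-prime, which is precisely (iii). I do not anticipate a genuine obstacle here; the one point needing a moment's care is that $\overline{E(R)}$ may be a proper subset of $E(\overline R)$, but this is harmless since in part (i) one only needs $[\overline{E(R)}, \overline R] \neq \overline 0$, and the presence of a single idempotent $e$ with $\overline e$ noncentral already guarantees this.
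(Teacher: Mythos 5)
Your proposal is correct and takes essentially the same route as the paper: part (i) is proved via Theorem \ref{thm16} by reducing a hypothetical nonzero element of $\ell_R([E(R),R])$ modulo a prime ideal not containing it and contradicting the existence of a noncentral idempotent image, and (iii) follows from (ii) through the subdirect product decomposition of a semiprime ring. The only deviation is cosmetic: for (ii) you invoke Theorem \ref{thm7} directly for the prime quotient $\overline R$ (which is legitimate, since $\overline e\notin Z(\overline R)$ makes $\overline e$ a nontrivial idempotent), whereas the paper reruns the annihilator argument through Theorem \ref{thm16} — but the paper itself notes Theorem \ref{thm7} is a consequence of Theorem \ref{thm16}, so nothing of substance changes.
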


\begin{proof}
(i)\ Let $E:=E(R)$. By Theorem \ref{thm16}, it suffices to show that $\ell
_{R}([E,R])=0$. Suppose not, that is, $\ell _{R}([E,R])\neq 0$. There exists
a nonzero $a\in R$ such that $a[E,R]=0$. In particular, $a[E,R^{2}]=0$ and
so $aR[E,R]=0$. Since $R$ is a semiprime ring, $a\notin P$ for some prime
ideal $P$ of $R$. Then $aR[E,R]\subseteq P$, implying $[E,R]\subseteq P $.
Hence $\overline{e}\in Z(\overline{R})$ for any idempotent $e\in R$, where $%
\overline{R}:=R/P$, a contradiction.

(ii)\ In view of Theorem \ref{thm16}, it suffices to show that $\ell _{%
\overline{R}}([E(\overline{R}),\overline{R}])=\overline{0}$. Suppose that ${%
\overline{a}}[E(\overline{R}),\overline{R}]=\overline{0}$, where $\overline{a%
}\in \overline{R}$. Since $\overline{E(R)}\subseteq E(\overline{R})$, we get
${\overline{a}}[\overline{E(R)}, \overline{R}]=\overline{0}$ and so
${\overline{a}}\overline{R}[\overline{E(R)}, \overline{R}]=\overline{0}$. Since, by
assumption, $[\overline{E(R)}, \overline{R}]\neq \overline{0}$, it follows
from the primeness of $\overline{R}$ that ${\overline{a}}=\overline{0}$, as
desired. Thus $\overline{R}$ is $[E(\overline{R}), \overline{R}]$-semiprime.
The primeness of $\overline{R}$ implies that $\overline{R}$ is $[E(\overline{%
R}),\overline{R}]$-prime.

(iii)\ Since every semiprime ring is a subdirect product of prime rings,
(iii) follows directly from (ii).
\end{proof}

Notice that Theorem \ref{thm4} is also an immediate consequence of Theorem \ref%
{thm14} (i).

\begin{theorem}
\label{thm15} Let $R$ be a semiprime ring. Suppose that $\overline {E(R)}=E({%
\overline R})$ for any prime homomorphic image $\overline R$ of $R$. If $R$
is a subdirect product of $[E(R_\beta), R_\beta]$-prime rings $R_\beta$, $%
\beta\in I$, an index set, then $R$ itself is $[E(R), R]$-semiprime.
\end{theorem}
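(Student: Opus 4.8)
The plan is to combine the subdirect-product hypothesis with Theorem~\ref{thm16}, which says that for a semiprime ring, $[E(R),R]$-semiprimeness is equivalent to $\ell_R([E(R),R])=0$. So the entire problem reduces to showing $\ell_R([E(R),R])=0$ under the stated assumptions. Suppose to the contrary that there is a nonzero $a\in R$ with $a[E(R),R]=0$. By the usual trick (replacing $R$ by $R^2$), $a[E(R),R^2]=0$ gives $aR[E(R),R]=0$, hence $aR[E(R),R]\subseteq P$ for every prime ideal $P$ of $R$.

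Now I would use the subdirect decomposition: $R$ is a subdirect product of the $[E(R_\beta),R_\beta]$-prime rings $R_\beta$, so there is a family of prime ideals $\{P_\beta\}_{\beta\in I}$ with $\bigcap_\beta P_\beta=0$ and $R/P_\beta\cong R_\beta$. Fix $\beta$ and write $\overline R:=R/P_\beta\cong R_\beta$. Passing to $\overline R$ and using the hypothesis $\overline{E(R)}=E(\overline R)$, the relation $aR[E(R),R]\subseteq P_\beta$ becomes $\bar a\,\overline R\,[E(\overline R),\overline R]=\overline 0$ in $\overline R$. Since $\overline R$ is $[E(\overline R),\overline R]$-prime and prime, primeness forces either $\bar a=\overline 0$ or $[E(\overline R),\overline R]=\overline 0$; but in an $[E(\overline R),\overline R]$-prime ring one cannot have $[E(\overline R),\overline R]=0$ (that would make $\overline R$, via the $X$-prime condition with $X=0$, impossible once $\overline R\neq 0$, or more simply $[E(\overline R),\overline R]=0$ would give $\bar a\,[E(\overline R),\overline R]\,\bar a=0$ for all $\bar a$, contradicting $[E(\overline R),\overline R]$-primeness applied to any nonzero $\bar a$). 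Hence $\bar a=\overline 0$, i.e. $a\in P_\beta$.

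Since $\beta$ was arbitrary, $a\in\bigcap_\beta P_\beta=0$, contradicting $a\neq0$. Therefore $\ell_R([E(R),R])=0$, and Theorem~\ref{thm16} yields that $R$ is $[E(R),R]$-semiprime, as desired.

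The main obstacle I anticipate is the bookkeeping around $\overline{E(R)}$ versus $E(\overline R)$: one needs the hypothesis $\overline{E(R)}=E(\overline R)$ precisely to transfer the annihilation relation faithfully to $\overline R$, since in general only $\overline{E(R)}\subseteq E(\overline R)$ holds and $a[E(R),R]=0$ in $R$ only gives $\bar a[\overline{E(R)},\overline R]=\overline 0$ a priori; under the hypothesis this upgrades to $\bar a[E(\overline R),\overline R]=\overline 0$. A secondary point to handle carefully is confirming that each $R_\beta$ is itself prime (so that the subdirect components really are prime rings and Theorem~\ref{thm16} or direct primeness arguments apply to them); this should follow because $[E(R_\beta),R_\beta]$-primeness in the sense defined forces $R_\beta$ to be prime when one unwinds the definition, or it is simply built into the phrase ``$[E(R_\beta),R_\beta]$-prime rings.'' Everything else is the standard semiprime/subdirect-product routine.
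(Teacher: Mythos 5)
Your proposal is correct and follows essentially the same route as the paper: reduce via Theorem~\ref{thm16} to showing $\ell_R([E(R),R])=0$, then push a hypothetical nonzero annihilating element $a$ into the prime quotients $R_\beta=R/P_\beta$, using the hypothesis $\overline{E(R)}=E(\overline R)$ to identify $[\,\overline{E(R)},\overline R\,]$ with $[E(R_\beta),R_\beta]$ and contradict $[E(R_\beta),R_\beta]$-primeness. Your extra detour through $aR[E(R),R]=0$ and the observation that $[E(R_\beta),R_\beta]$-primeness forces $R_\beta$ to be prime (so the kernels are prime ideals, which the paper takes for granted) are harmless refinements, not a different argument.
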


\begin{proof}
Write $R_{\beta }=R/P_{\beta }$, where $P_{\beta }$ is a prime ideal of $R$,
for each $\beta \in I$. By Theorem \ref{thm16}, it suffices to prove that $%
\ell _{R}([E(R),R])=0$. Otherwise, $a[E(R),R]=0$ for some nonzero $a\in R$.
Since $\bigcap_{\beta \in I}P_{\beta }=0$, there exists $\beta \in I$ such
that $\overline{a}\neq 0$ in $R_{\beta }=R/P_{\beta }$.

By assumption, we have $E(R_\beta)=\overline{E(R)}$. So ${\overline a}%
[E(R_\beta), R_\beta]=0$ and hence $\ell_{R_\beta}([E(R_\beta), R_\beta])\ne
0$. Theorem \ref{thm16} implies that $R_\beta$ is not $[E(R_\beta), R_\beta]$%
-prime, a contradiction.
\end{proof}

Motivated by Theorem \ref{thm15}, it is natural to ask the following\vskip6pt

\begin{problem}\label{problem4}
Characterize semiprime rings $R$ satisfying
the property that $\overline {E(R)}=E({\overline R})$ for any prime
homomorphic image $\overline R$ of $R$.
\end{problem}

Recall that if $A$ is an additive subgroup of a ring $R$, we say that
idempotents can be lifted modulo $A$ if, given $x\in R$ with $x-x^{2}\in A$, there exists $e=e^{2}\in R$ such that $e-x\in A$.

A ring $R$ is called {\it suitable} (or {\it exchange} \cite{nicholson1977})
if, given any $x\in R$, there exists $e=e^{2}\in R$ with $e-x\in R(x-x^{2})$.
Nicholson proved that a ring is suitable iff idempotents can be lifted
modulo every left ideal (see \cite[Corollary 1.3]{nicholson1977}). Hence we
have

\begin{proposition}
\label{pro2} Let $R$ be a suitable ring. Then $\overline {E(R)}=E({\overline
R})$ for any prime homomorphic image $\overline R$ of $R$.
\end{proposition}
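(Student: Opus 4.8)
The plan is to pass to a prime homomorphic image $\overline{R} = R/P$ and show that $\overline{E(R)} = E(\overline{R})$ directly, using that $R$ is suitable. Since $P$ is a prime ideal, $\overline{R}$ is a prime ring, and we must show every idempotent of $\overline{R}$ comes from an idempotent of $R$. So let $\overline{x} \in \overline{R}$ satisfy $\overline{x} = \overline{x}^2$, i.e. $x - x^2 \in P$ for some representative $x \in R$. Because $R$ is suitable (equivalently, idempotents lift modulo every left ideal, by Nicholson's theorem cited just above), and $P$ is in particular a left ideal of $R$, there exists an idempotent $e = e^2 \in R$ with $e - x \in P$. Then $\overline{e} = \overline{x}$, so $\overline{x} \in \overline{E(R)}$.

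First I would note the trivial inclusion $\overline{E(R)} \subseteq E(\overline{R})$: indeed, the image of an idempotent is an idempotent, and $\overline{E(R)}$ is the additive subgroup generated by images of idempotents, all of which lie in $\overline{R}$; since $E(\overline{R})$ is by definition the additive subgroup of $\overline{R}$ generated by $Id(\overline{R})$, and each such generator $\overline{e}$ with $e \in Id(R)$ clearly lies in $Id(\overline{R})$, we get $\overline{E(R)} = \overline{Id(R)^+} \subseteq Id(\overline{R})^+ = E(\overline{R})$. Then for the reverse inclusion, by the argument above, $Id(\overline{R}) \subseteq \overline{Id(R)}$, hence taking additive subgroups generated, $E(\overline{R}) = Id(\overline{R})^+ \subseteq \overline{Id(R)}^+ = \overline{Id(R)^+} = \overline{E(R)}$. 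Combining the two inclusions gives $\overline{E(R)} = E(\overline{R})$.

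The only subtlety — and it is minor — is making sure the "idempotents lift modulo every left ideal" statement is applied correctly: a prime ideal $P$ of $R$ is a two-sided ideal, hence certainly a left ideal, so Nicholson's characterization applies verbatim. There is no real obstacle here; the proposition is essentially an immediate corollary of Nicholson's theorem (\cite[Corollary 1.3]{nicholson1977}) together with the observation that idempotent-lifting modulo $P$ is exactly what is needed to identify $Id(\overline{R})$ with $\overline{Id(R)}$. The one thing to be careful about in the write-up is to phrase the lifting cleanly: from $x - x^2 \in P$ conclude the existence of $e \in Id(R)$ with $e - x \in P$, and then observe $\overline{e} = \overline{x}$.

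Thus the proof is short: cite Nicholson, lift a given idempotent of $\overline{R}$ to one of $R$, and conclude $Id(\overline{R}) \subseteq \overline{Id(R)}$, which upgrades to the equality $\overline{E(R)} = E(\overline{R})$ of the generated additive subgroups. I would present it in three or four lines.
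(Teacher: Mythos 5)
Your proof is correct and is exactly the argument the paper intends: the proposition is stated as an immediate consequence of Nicholson's theorem that suitable rings lift idempotents modulo every left ideal, applied to the (two-sided, hence left) ideal $P$, which is precisely your lifting step, combined with the trivial inclusion $\overline{E(R)}\subseteq E(\overline{R})$. No gaps; your handling of the passage from $Id(\overline{R})\subseteq\overline{Id(R)}$ to the equality of the generated additive subgroups is the right way to make the short argument explicit.
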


The class of suitable rings is large:\ every homomorphic image of a suitable
ring, semiregular rings, clean rings and many others (see \cite%
{nicholson1977}).

\begin{theorem}
\label{thm17} Let $R$ be a semiprime suitable ring. Assume that $R/P$ contains a nontrivial idempotent for any prime ideal $P$
of $R$. Then $R$ is $[E(R), R]$-semiprime.
\end{theorem}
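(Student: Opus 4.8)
The plan is to assemble the statement from results already in hand, chiefly Theorem \ref{thm15}, Theorem \ref{thm7} and Proposition \ref{pro2}. First I would invoke the standard fact that a semiprime ring $R$ is a subdirect product of prime rings $R_\beta:=R/P_\beta$, where $\{P_\beta\}_{\beta\in I}$ is a family of prime ideals of $R$ with $\bigcap_{\beta\in I}P_\beta=0$. By hypothesis each $R_\beta$ contains a nontrivial idempotent, so Theorem \ref{thm7} applies to each $R_\beta$ and shows that $R_\beta$ is $[E(R_\beta),R_\beta]$-prime.

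Next I would bring in the hypothesis that $R$ is suitable. By Proposition \ref{pro2}, $\overline{E(R)}=E(\overline R)$ for every prime homomorphic image $\overline R$ of $R$; in particular this holds for each $R_\beta=R/P_\beta$. Hence $R$ is a subdirect product of $[E(R_\beta),R_\beta]$-prime rings $R_\beta$ and satisfies exactly the hypothesis $\overline{E(R)}=E(\overline R)$ required in Theorem \ref{thm15}. Applying Theorem \ref{thm15} then yields that $R$ is $[E(R),R]$-semiprime, as desired. (An equivalent route avoids Theorem \ref{thm15}: since $R$ is suitable, idempotents lift modulo every two-sided ideal, so a nontrivial idempotent of $R/P$ lifts to an idempotent $e\in R$; moreover a nontrivial idempotent $\bar e$ of a prime ring is never central, since $\bar e$ central would give $\bar e\,\overline R\,(1-\bar e)=\overline R\,\bar e(1-\bar e)=0$, forcing $\bar e=0$ or $\bar e=1$ by primeness. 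Thus $\bar e\notin Z(R/P)$ for every prime ideal $P$, and Theorem \ref{thm14}(i) applies directly.)

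There is no genuine obstacle here: the substantive content is already packaged in Theorems \ref{thm7}, \ref{thm14}, \ref{thm15} and Proposition \ref{pro2}, and the proof is essentially a bookkeeping argument linking them. The only points needing care are (a) checking that a prime quotient possessing a nontrivial idempotent indeed meets the hypothesis of Theorem \ref{thm7} (it does, immediately), and (b) confirming that Proposition \ref{pro2} supplies precisely the equality $\overline{E(R)}=E(\overline R)$ demanded by Theorem \ref{thm15}; both are routine.
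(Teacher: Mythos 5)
Your proposal is correct and follows essentially the same route as the paper: write $R$ as a subdirect product of prime quotients $R_\beta=R/P_\beta$, apply Theorem \ref{thm7} to each (using the hypothesis that every $R/P$ has a nontrivial idempotent), use Proposition \ref{pro2} (suitability) to get $\overline{E(R)}=E(\overline R)$, and conclude via Theorem \ref{thm15}. Your parenthetical alternative, lifting a nontrivial idempotent modulo each prime ideal and feeding the resulting noncentral idempotent into Theorem \ref{thm14}(i), is also sound, but the main argument coincides with the paper's proof.
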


\begin{proof}
Since $R$ is a semiprime ring, $R$ is a subdirect product of prime rings $%
R_\beta$, $\beta\in I$, an index set. By the fact that $R$ is suitable, it
follows from Proposition \ref{pro2} that $[E(R),
R]+P_\beta/P_\beta=[E(R_\beta), R_\beta]$ for any $\beta\in I$. In view of
Theorem \ref{thm7}, every $R_\beta$ is $[E(R_\beta), R_\beta]$-prime. Hence,
by Theorem \ref{thm15}, $R$ is $[E(R), R]$-semiprime.
\end{proof}

\end{document}